\numberwithin{equation}{section}
\theoremstyle{plain}
\newtheorem{Thm}{Theorem}[section]
\newtheorem{Lem}[Thm]{Lemma}
\newtheorem{Cor}[Thm]{Corollary}
\newtheorem{Prop}[Thm]{Proposition}
\newtheorem*{Thm*}{Theorem}
\theoremstyle{definition}
\newtheorem{Def}[Thm]{Definition}
\newtheorem{Rem}[Thm]{Remark}
\newcommand{\equ}{equation}
\newcommand{\C}{\mathbb{C}}
\newcommand{\R}{\mathbb{R}}
\newcommand{\Z}{\mathbb{Z}}
 \DeclareMathOperator{\dist}{dist}
 \DeclareMathOperator{\sgn}{sgn}
\let\nhatoksa=\theenumi
\let\nhatoksb=\labelenumi
\let\nhatoksc=\theenumii
\let\nhatoksd=\labelenumii
\newlength{\nhalengtha}
\newlength{\nhalengthb}
\newlength{\nhalengthc}
\newcommand{\resetenum}{
\let\theenumi=\nhatoksa
\let\labelenumi=\nhatoksb
\let\theenumii=\nhatoksc
\let\labelenumii=\nhatoksd
\setlength{\leftmargini}{\nhalengtha}
\setlength{\leftmarginii}{\nhalengthb}
\setlength{\labelwidth}{\nhalengthc}
}
\newcommand\rr{\mathcal{R}}
\def\msa{\mathscr{A}}
\def\msj{\mathscr{J}}
\def\msk{\mathscr{K}}
\def\msl{\mathscr{L}}
\def\msm{\mathscr{M}}
\def\msn{\mathscr{N}}
\def\msr{\mathscr{R}}
\def\mst{\mathscr{T}}
\def\ov{\overline}
\def\pa {\partial}
\def\op{\oplus}
\def\De{\Delta}
\def\ka {\kappa}
\def\al {\alpha}
\def\bt {\beta}
\def\de {\delta}
\def\Ga {\Gamma}
\def\ga {\gamma}
\def\lm {\lambda}
\def\Lam {\Lambda}
\def\Om {\Omega}
\def\sa {\sigma}
\def\vr {\varepsilon}
\def\va {\varphi}
\def\sgn{\hbox{sgn}}
\newcommand{\bkt}[1]{\left(#1\right)}
\newcommand{\inp}[2]{\left\langle#1,#2\right\rangle}
\title{Localized concentration of semi-classical states for
nonlinear Dirac equations}
\author{Yanheng Ding\footnote{Email address: dingyh@math.ac.cn}, \,
Tian Xu\footnote{Corresponding author, email address: xutian@amss.ac.cn}   \and {\small
Institute
of Mathematics, Academy of Mathematics and Systems Science,} \\
{\small  Chinese Academy of Sciences, 100190 Beijing, China} }
\date{}
\begin{document}
\maketitle

\begin{abstract}
The present paper studies concentration phenomena of semiclassical
approximation of a massive Dirac equation with general nonlinear
self-coupling:
\[
-i\hbar\al\cdot\nabla w+a\bt w+V(x)w=g(|w|)w \,.
\]
Compared with some existing issues, the most interesting results
obtained here are twofold: the solutions concentrating around local
minima of the external potential; and the nonlinearities assumed to
be either super-linear or asymptotically linear at the
infinity. As a consequence one sees that, if there are $k$ bounded
domains $\Lam_j\subset\R^3$ such that $-a<\min_{\Lam_j}
V=V(x_j)<\min_{\pa\Lam_j}V$, $x_j\in\Lam_j$, then the
$k$-families of solutions $w_\hbar^j$ concentrates around $x_j$
as $\hbar\to 0$, respectively. The proof relies on variational
arguments: the solutions are found as critical points of an energy
functional. The Dirac operator has a continuous spectrum which is
not bounded from below and above, hence the energy functional is
strongly indefinite. A penalization technique is developed here to
obtain the desired solutions.

\vspace{.5cm}

\noindent{\bf Mathematics Subject Classifications (2000):} \,
35Q40, 49J35.

\vspace{.5cm}

\noindent {\bf Keywords:} \, \, Dirac equation,
semi-classical states, concentration.

\end{abstract}

\section{Introduction and main result}

This paper is motivated by some works appeared in recent
years concerning the nonlinear Dirac equation
\begin{\equ}\label{nd}
-i\hbar\pa_t\psi = ic\hbar\sum_{k=1}^3\al_k\pa_k\psi
- mc^2\bt \psi - M(x)\psi + f(x,|\psi|)\psi
\end{\equ}
for $(t,x)\in\R\times\R^3$. Here $c$ is the speed of light, $\hbar$
is Planck's constant, $m$ is
the mass of the electron and
$\alpha_1$, $\alpha_2$, $\alpha_3$ and $\beta$ are $4\times4$
complex Pauli matrices:
\[
\beta=\left(
\begin{array}{cc}
I&0\\
0&-I
\end{array}\right), \quad \alpha_k=\left(
\begin{array}{cc}
0&\sigma_k\\
\sigma_k&0
\end{array}\right), \quad k=1,2,3,
\]
with
\[
\sigma_1=\left(
\begin{array}{cc}
0&1\\
1&0
\end{array}\right), \quad \sigma_2=\left(
\begin{array}{cc}
0&-i\\
i&0
\end{array}\right), \quad \sigma_3=\left(
\begin{array}{cc}
1&0\\
0&-1
\end{array}\right) \,.
\]
The external fields $M(x)$ and $f(x,|\psi|)$ in \eqref{nd}
arise in mathematical models of particle physics,
especially in nonlinear topics. They are inspired by approximate
descriptions of the external forces involve only functions of fields.
The nonlinear self-coupling $f(x,|\psi|)$, which describes
a self-interaction in Quantum electrodynamics,
gives a closer description of many particles found
in the real world.  Typical examples can be found
in the self-interacting scalar theories, where
the nonlinear function $f$ can be both polynomial
and nonpolynomial (that includes the cases: $|\psi|^\lm$, $\sin|\psi|$ etc.).
Various nonlinearities are considered to be
possible basis models for unified field theories (see \cite{FLR},
\cite{FFK}, \cite{Iva} etc. and references therein).

Owning to the representation of \eqref{nd}, we are interested in solutions
of the form $\psi(t,x)=\exp(i\xi t/\hbar)\va(x)$ which are called the
standing waves (stationary states). It is easily checked
a $\psi$ of this form satisfies equation \eqref{nd} if and only if
$\va$ solves
\begin{\equ}\label{nd2}
-ic\hbar\sum_{k=1}^3\al_k\pa_k\va
+ mc^2\bt \va + \big( M(x)+\xi \big)\va =  f(x,|\va|)\va \,.
\end{\equ}
Dividing \eqref{nd2} by $c$, we are led to study the equation
of the form
\begin{\equ}\label{nd3}
-i\hbar\sum_{k=1}^3\al_k\pa_k\va
+ mc\bt \va + V(x)\va =  g(x,|\va|)\va \,.
\end{\equ}
Initiated by \cite{Ding2010}, the case $V(x)\equiv0$ and
$g(x,|\va|)=P(x)|\va|^{p-2}$ for $p\in(2,3)$ with a given external
potential $P(x)$ is considered and a global variational technique is
developed to show the least energy solution exists provided that
$\hbar$ is sufficiently small and the solution concentrates around
the maxima point of $P$ as $\hbar\to0$. This method and result were
later generalized jointly with co-authors in \cite{Ding-Lee-Ruf,
Ding2012, Ding Ruf} to competing potentials and critical
nonlinearities, that is
\begin{\equ}\label{lp1}
V(x)\not\equiv0 \,, \quad   \min_{x\in\R^3}V(x)<
\liminf_{|x|\to\infty}V(x) \,,
\end{\equ}
and $g(x,|\va|)\sim P(x)(|\va|^{p-2}+|\va|)$ for $p\in(2,3)$. As was
shown in \cite{Ding-Lee-Ruf, Ding2012, Ding Ruf}, the semi-classical
solutions concentrate around some certain points that depend on both
linear and nonlinear potentials. Further investigations on the
existence of solutions concentrating at certain points to nonlinear
Dirac equations (including Maxwell-Dirac systems and
Klein-Gordon-Dirac systems) under different conditions have also
appeared in \cite{Tian1, Tian2, Tian3}.

The method mentioned above basically depends on the global condition
of the external potentials (see for example \eqref{lp1}).
An interesting question, which  motivates
the present work, is whether one can find solutions which concentrate around
local minima (or maxima) of a external potential.

As we will see, the answer is (or at least partially) affirmative.
For  small $\hbar$, the solitary waves are
referred to as semi-classical states. The physical interpretation of such
states is the following. One of the basic principles of quantum mechanics
is the correspondence principle, according to which when $\hbar\to0$, the laws of
quantum mechanics must reduce to those of classical mechanics.
To describe the transition
from quantum to classical mechanics, the existence of solutions
$\va_\hbar$, $\hbar$ small,
possesses an important physical interest. In the present paper,
denoted by $\vr=\hbar$,
$\al=(\al_1, \al_2, \al_3)$ and
$\al\cdot\nabla=\sum_{k=1}^3\al_k\pa_k$, we are concerned
with the following stationary nonlinear Dirac equation
\begin{\equ}\label{D2}
-i\vr\al\cdot\nabla w + a\bt w + V(x) w = g(|w|)w \,,
\text{ for } w\in H^1(\R^3,\C^4)
\end{\equ}
where $a=mc>0$ is constant.

Let us assume the external linear potential satisfies
\begin{itemize}
\item[$(V_1)$] {\it $V$ is locally H\"older continuous and $\max|V|<a$}.
\end{itemize}
And for the nonlinear fields, by writing $G(s):=\int^{s}_0g(t)t
\,dt$, we begin with the sup-linear case:
\begin{itemize}
\item[$(g_1)$] {\it $g(0)=0$, $g\in C^1(0,\infty)$, $g'(s)>0$};
\item[$(g_2)$]
\begin{itemize}
\item[$(i)$] {\it there exist $p\in (2, 3)$, $c_1>0$ such that $g(s)\leq
c_1(1+s^{p-2})$ for $s\geq 0$};
\item[$(ii)$] {\it  there exists $\theta>2$ such that
$0< G(s)\leq \frac{1}{\theta}g(s)s^2$ for all $s>0$};
\end{itemize}
\item[$(g_3)$] {\it the function $s\mapsto g'(s)s+g(s)$ is nondecreasing}.
\end{itemize}

Our first result is as follows
\begin{Thm}\label{main theorem}
Suppose $(V_1)$ and $(g_1)$-$(g_3)$ are satisfied. Assume additionally that
there is a bounded domain $\Lam$ in $\R^3$ such that
\begin{\equ}\label{local}
\underline c:=\min_\Lam V < \min_{\pa\Lam} V \, .
\end{\equ}
Then for all $\vr>0$ small,
\begin{itemize}
\item[$(i)$] a solution $w_\vr\in\cap_{q\geq2}W^{1,q}(\R^3,\C^4)$
to \eqref{D2} exists;

\item[$(ii)$] $|w_\vr|$ possesses a (global)
maximum point $x_\vr$ in $\Lam$ such that
\[
\lim_{\vr\to 0}V(x_\vr)= \underline c \, ,
\]
and
\[
|w_\vr(x)|\leq C \exp{\big( -\frac{c}{\vr}|x-x_\vr|\big)}
\]
for certain constants $C,c>0$;

\item[$(iii)$] setting $v_\vr(x)=w_\vr(\vr x + x_\vr)$, as
$\vr\to0$, we have
$v_\vr$ converges in $H^1$ to a least energy solution of
\[
-i\al\cdot\nabla v + a\bt v + \underline c \, v = g(|v|)v \,.
\]
\end{itemize}
\end{Thm}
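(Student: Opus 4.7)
The overall plan is to adapt the del Pino--Felmer penalization technique to the strongly indefinite Dirac framework. With the rescaling $v(x):=w(\vr x)$, equation \eqref{D2} becomes
\[
H_0 v+V(\vr x)v=g(|v|)v,\qquad H_0:=-i\al\cdot\nabla+a\bt,
\]
on $\R^3$. Hypothesis $(V_1)$ places $0$ in the spectral gap of $H_0+V(\vr\cdot)$, so the natural energy space $E:=\dom(|H_0|^{1/2})$ splits orthogonally as $E^+\oplus E^-$ and the functional
\[
\Phi_\vr(v)=\tfrac12\xnorm{v^+}^2-\tfrac12\xnorm{v^-}^2+\tfrac12\int V(\vr x)|v|^2\,dx-\int G(|v|)\,dx
\]
is strongly indefinite. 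Critical points must therefore be produced through a generalized linking theorem of Kryszewski--Szulkin type rather than by a mountain pass.

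The first key step is the construction of a penalized problem. I would fix a tubular neighbourhood $\Lam_0\Subset\Lam$ of $\{V=\underline c\}\cap\Lam$ and a constant $\ell\in(\underline c,\min_{\pa\Lam}V)$, and replace $g(s)s$ on the complement of $\Lam_0/\vr$ by a smooth truncation of the form $\min\{g(s)s,(a-\ell)s\}$, using $(g_2)$--$(g_3)$ to control the resulting term. The modified functional $\tilde\Phi_\vr$ then satisfies a local $(PS)$ condition because the truncated nonlinearity is dominated by the linear part away from $\Lam_0/\vr$. The linking theorem yields a critical point $v_\vr$ of $\tilde\Phi_\vr$, and a reduction argument on the generalized Nehari manifold provides the crucial upper bound $\tilde\Phi_\vr(v_\vr)\le c_{\underline c}+o_\vr(1)$, where $c_\mu$ denotes the least-energy level of the autonomous limit problem $H_0 v+\mu v=g(|v|)v$.

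Concentration of $v_\vr$ follows next from a Lions-type alternative combined with the monotonicity $\mu\mapsto c_\mu$ on $(-a,\max_\Lam V)$ and with the strict inequality \eqref{local}: the energy bound rules out both vanishing and splitting, so if $y_\vr$ is a sequence of maxima of $|v_\vr|$, the point $\vr y_\vr$ cannot escape $\Lam$ or approach $\pa\Lam$. After translation $v_\vr(\cdot+y_\vr)\rightharpoonup v_0$ weakly, and matching upper and lower energy bounds upgrades this to strong $H^1$ convergence, with $v_0$ a least-energy solution of $H_0 v+\underline c\, v=g(|v|)v$. This proves (iii), and (ii) on the location of the maximum and $V(x_\vr)\to\underline c$ follows at once. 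A standard Dirac regularity bootstrap using $(g_2)(i)$ places $v_\vr\in\cap_{q\ge 2}W^{1,q}$ with a uniform $L^\infty$ bound, and a Kato-type inequality gives $-\Delta|v_\vr|+c^2|v_\vr|\le 0$ outside a large ball around $y_\vr$; comparison with an exponential barrier supplies the decay $|v_\vr(x)|\le C e^{-c|x-y_\vr|}$, which is precisely the announced bound when translated back to $w_\vr$. The same decay ensures that $|v_\vr|$ lies below the truncation threshold outside $\Lam_0/\vr$, so $v_\vr$ solves the original unmodified equation, yielding (i).

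The main obstacle will be the concentration step. Unlike in the semi-bounded Schr\"odinger setting, one cannot obtain the decisive $L^\infty$ estimate outside $\Lam/\vr$ by cut-off tests against $v_\vr$, because the strong indefiniteness of $\Phi_\vr$ forbids any direct coercivity estimate of this kind. One must instead exploit uniform control of the generalized Nehari manifold associated with the family of limit operators $H_0+\mu$, $\mu\in(-a,a)$, together with the strict monotonicity of $c_\mu$, in order to rule out any concentration profile at a point where $V>\underline c$. Making these comparisons precise and, in particular, establishing continuous dependence of $c_\mu$ on $\mu$ despite the fact that the splitting $E^+/E^-$ itself varies with $\mu$, is the heart of the argument.
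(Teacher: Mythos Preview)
Your overall scheme---penalize outside $\Lam$, produce a critical point of the modified functional, bound its energy by the autonomous level $c_{\underline c}$, run a Lions-type concentration analysis using the strict monotonicity of $\mu\mapsto c_\mu$, then use Kato's inequality for exponential decay and conclude the penalization is inactive---matches the paper's strategy closely. Two points of comparison are worth flagging.

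First, on the variational machinery: you propose to invoke a Kryszewski--Szulkin linking theorem on the full strongly indefinite functional $\widetilde\Phi_\vr$. The paper instead performs a Lyapunov--Schmidt-type reduction: for each $u\in E^+$ one maximizes $v\mapsto\widetilde\Phi_\vr(u+v)$ over $E^-$ (this maximum exists and is unique by strict concavity, which follows from $(g_3)$ and the penalization), obtaining a $C^1$ map $h_\vr:E^+\to E^-$ and a reduced functional $I_\vr(u)=\widetilde\Phi_\vr(u+h_\vr(u))$ on $E^+$. One then runs an ordinary mountain pass on $I_\vr$ and identifies $c_\vr=\inf_{u\in E^+\setminus\{0\}}\max_{t\ge0}I_\vr(tu)$. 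Both routes work, but the reduction makes the energy comparison with the limit problem (and hence the upper bound $c_\vr\le \ga_{\underline c}+o(1)$) more transparent, since one is comparing mountain-pass values of functionals on the \emph{same} space $E^+$.

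Second, your stated ``main obstacle''---that the spectral splitting $E^+\oplus E^-$ varies with $\mu$---is an artificial difficulty. The paper fixes once and for all the splitting coming from $H_0=-i\al\cdot\nabla+a\bt$ alone; since $|V|_\infty<a$ by $(V_1)$, the potential and the parameter $\mu\in(-a,a)$ enter only as bounded perturbations controlled by $\|u\|^2\ge a|u|_2^2$. With this fixed splitting the monotonicity $\mu_1>\mu_2\Rightarrow\ga_{\mu_1}>\ga_{\mu_2}$ is a short computation (compare $J_{\mu_1}$ and $J_{\mu_2}$ along the same ray in $E^+$), and no continuity-in-$\mu$ argument is needed. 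Relatedly, your penalization threshold $(a-\ell)$ with $\ell\in(\underline c,\min_{\pa\Lam}V)$ is not the right constant: what one needs is that the truncated nonlinearity satisfy $\tilde g(s)\le\tfrac{a-|V|_\infty}{2}$ (or any constant strictly below $a-|V|_\infty$) so that outside $\Lam_\vr$ it is absorbed by the quadratic form; your $a-\ell$ can be as large as nearly $2a$ and would not give this control.
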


We mention that if \eqref{lp1}
is assumed, then \eqref{local} is satisfied naturally. So
Theorem \ref{main theorem} can be seen as a local
(or even a stronger) version
of the consequences proved in \cite{Ding2012}.

Our next result is concerned with the asymptotically linear
case. Denoted by $\widehat G(s):=\frac12 g(s)s^2-G(s)$,
the hypothesis $(g_2)$ will be replaced by
\begin{itemize}
\item[$(g_2')$]
\begin{itemize}
\item[$(i)$]{\it there exists $b>\max|V|+a$ such that $g(s)\to b$ as
$s\to\infty$ };
\item[$(ii)$] {\it $\widehat G(s)>0$ if $s>0$, and
$\widehat G(s)\to\infty$ as $s\to\infty$}.
\end{itemize}
\end{itemize}

Our result reads as

\begin{Thm}\label{main theorem 2}
Suppose $(V_1)$, $(g_1)$, $(g_2')$ and $(g_3)$
are satisfied. Let \eqref{local} be satisfied for
some bounded domain $\Lam\subset\R^3$.
Then for all $\vr>0$ small,
\begin{itemize}
\item[$(i)$] a solution $w_\vr\in\cap_{q\geq2}W^{1,q}(\R^3,\C^4)$
to \eqref{D2} exists;

\item[$(ii)$] $|w_\vr|$ possesses a (global)
maximum point $x_\vr$ in $\Lam$ such that
\[
\lim_{\vr\to 0}V(x_\vr)= \underline{c} \, ,
\]
and
\[
|w_\vr(x)|\leq C \exp{\big( -\frac{c}{\vr}|x-x_\vr|\big)}
\]
for certain constants $C,c>0$;

\item[$(iii)$] setting $v_\vr(x)=w_\vr(\vr x + x_\vr)$, as
$\vr\to0$, we have
$v_\vr$ converges in $H^1$ to a least energy solution of
\[
-i\al\cdot\nabla v + a\bt v + \underline{c}\,v = g(|v|)v \,.
\]
\end{itemize}
\end{Thm}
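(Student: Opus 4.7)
\textbf{Proof proposal for Theorem \ref{main theorem 2}.}

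The plan is to mimic the penalization scheme used to establish Theorem~\ref{main theorem}, replacing the super-quadratic variational analysis by one suited to asymptotically linear, strongly indefinite functionals. After the rescaling $u(x)=w(\vr x)$, problem \eqref{D2} becomes
\[
-i\al\cdot\nabla u + a\bt u + V(\vr x)\,u = g(|u|)\,u , \qquad u\in H^1(\R^3,\C^4),
\]
and the concentration statement is equivalent to showing that the relevant critical points are supported, up to exponentially decaying tails, inside $\Lam_\vr:=\vr^{-1}\Lam$. Fix a small $\nu>0$ with $\nu<a-\esssup|V|$ and replace $g(s)s$ outside $\Lam_\vr$ by $\tilde g_\vr(x,s)s:=\min\{g(s),\nu\}\,s$ (keeping $g$ unchanged inside $\Lam_\vr$). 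The penalized functional $\Phi_\vr$ still has the right linking geometry, but now the truncation outside $\Lam_\vr$ prevents non-trivial critical points concentrated away from $\Lam_\vr$.

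Next, apply the generalized linking theorem for strongly indefinite functionals (Kryszewski--Szulkin, Bartsch--Ding). The Dirac operator $D_0=-i\al\cdot\nabla+a\bt$ has spectrum $(-\infty,-a]\cup[a,\infty)$, so $H^1$ decomposes as $E^+\oplus E^-$; hypothesis $(V_1)$ keeps this splitting after adding the bounded perturbation $V(\vr\cdot)$. The condition $b>\max|V|+a$ in $(g_2')$ ensures that the nonlinear term crosses the spectral gap of the perturbed linear operator, giving the linking geometry with a positive min-max level $c_\vr$. Because the Ambrosetti--Rabinowitz condition is absent here, one works with Cerami (rather than Palais--Smale) sequences; their boundedness follows from $(g_3)$ together with $\widehat G(s)\to\infty$ from $(g_2')$(ii), exactly as in the bounded-domain asymptotically linear theory. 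This yields a critical point $u_\vr$ of $\Phi_\vr$ at level $c_\vr$.

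The heart of the argument is the concentration step, which shows that $u_\vr$ actually solves the original (un-penalized) equation. One compares $c_\vr$ with $E(\underline c)$, the least energy of the constant-coefficient limit problem $-i\al\cdot\nabla v+a\bt v+\underline c\,v=g(|v|)v$; existence of a least energy solution for the limit problem is itself obtained by the same linking scheme (using $b>\underline c+a$, which is the reason for the uniform gap assumption in $(g_2')$). A standard concentration-compactness analysis of the Cerami sequence realizing $c_\vr$, localized via cut-offs around points where $|u_\vr|$ is not small, produces after translation a nontrivial weak limit solving the limit equation with potential $V_\infty=\lim_n V(\vr x_n)$; comparing energies forces $V_\infty=\underline c$, so the maximum points $x_\vr$ accumulate on $\{V=\underline c\}\cap\Lam$. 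A Moser-type $L^\infty$-estimate then shows $|u_\vr|$ is uniformly small on $\R^3\setminus\Lam_\vr$, so there the penalization is inactive and $u_\vr$ solves the genuine equation. The exponential decay is obtained by constructing a super-solution $Ce^{-c|x-x_\vr|}$ from the mass gap $a-|V(x_\vr)|+o(1)>0$, and the $W^{1,q}$ regularity follows by bootstrapping the Dirac equation.

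The main obstacle is the loss of the Ambrosetti--Rabinowitz control: one must bound Cerami sequences and exclude vanishing using only $\widehat G\to\infty$ and the monotonicity $(g_3)$, and one must compare the penalized min-max level $c_\vr$ with the limiting level $E(\underline c)$ without the homogeneity that is tacitly used in the super-linear case. The inequality $c_\vr\leq E(\underline c)+o(1)$ is shown by inserting a translated least-energy solution of the limit problem into the linking class; the opposite inequality comes from the concentration analysis above. Once this two-sided comparison is in place, the remainder of the proof (location of $x_\vr$, strong $H^1$ convergence of $v_\vr:=u_\vr(\cdot+x_\vr/\vr)$, decay, regularity) is parallel to the corresponding part of Theorem~\ref{main theorem}.
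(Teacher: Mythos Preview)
Your outline is correct and follows the same global architecture as the paper: penalize $g$ outside $\Lambda_\vr$, produce a critical point of the penalized functional at a min-max level $c_\vr$ via an argument suited to strongly indefinite functionals with Cerami compactness, sandwich $c_\vr$ against the least energy $\gamma_{V_0}$ of the constant-coefficient limit equation, run concentration-compactness to force the maximum into $\{V=\underline c\}\cap\Lambda$, and then use exponential decay to deactivate the penalization. The one substantive difference is the variational machinery. The paper does \emph{not} use the Kryszewski--Szulkin/Bartsch--Ding linking theorem directly; instead it performs a reduction $h_\vr:E^+\to E^-$ with $\widetilde\Phi_\vr(u+h_\vr(u))=\max_{v\in E^-}\widetilde\Phi_\vr(u+v)$ and works with the reduced functional $I_\vr(u)=\widetilde\Phi_\vr(u+h_\vr(u))$ on $E^+$, which has an ordinary mountain-pass geometry. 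This reduction buys the sharp characterization $c_\vr=\inf_{u\in E^+\setminus\{0\}}\max_{t\geq0}I_\vr(tu)$, which is then used twice: for the upper bound $c_\vr\leq\gamma_{V_0}+o(1)$ one only has to track $I_\vr$ along the single ray $t\mapsto tw^+$ (with $w$ a limit ground state), and in the concentration step the key inequality $S_\infty(v)\geq\gamma_{V_0}$ is obtained by comparing \emph{reduced} functionals along rays. Your direct linking approach would also succeed, but both of these comparisons would have to be carried out over the full half-spaces $E_e=E^-\oplus\R^+e$, which is heavier in the asymptotically linear case since one cannot rescale. A minor point: the paper's truncation is not $\min\{g(s),\nu\}$ but a $C^1$ modification fixing the derivative of $s\mapsto \tilde g(s)s$ to $\tfrac{a-|V|_\infty}{2}$ past a threshold, precisely so that the monotonicity condition $(f_5)$ (needed for strict concavity of $v\mapsto\widetilde\Phi_\vr(u+v)$ and hence for the reduction) is preserved.
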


To our knowledge Theorem \ref{main theorem 2}
is the first concentrating result concerned
with asymptotically linear
Dirac equation. Moreover, as mentioned before,
our assumption \eqref{local} is rather weak:
no restriction on the global behavior of $V$
is required other than $(V_1)$. In particular, the behavior
of $V$ outside $\Lam$ is irrelevant. On the other hand,
hypotheses $(g_1)$-$(g_3)$ are satisfied by a large class of
nonlinearities, which may appear in the self-interacting scalar theories,
including:
\begin{itemize}
\item[$1$.] $G(s)=s^p$ with $p\in(2,3)$ for the super-linear case;
\item[$2$.] $G(s)=\frac b2 s^2\Big(
1-\frac1{\ln(e+s)} \Big)$ for the asymptotically linear case.
\end{itemize}
Due to the above observations, we have an immediate consequence of
our main theorems:

\begin{Cor}\label{corollary}
Suppose $(V_1)$, $(g_1)$, $(g_3)$ and either $(g_2)$
or $(g_2')$ are satisfied. If there exist mutually disjoint
bounded domains $\Lam_j$ $j=1,\dots,k$
and constants $c_1<c_2<\cdots<c_k$ such that
\begin{\equ}\label{multi-domain}
c_j:=\min_{x\in\Lam_j}V(x)<\min_{x\in\pa\Lam_j}V(x) \,.
\end{\equ}
Then for all $\vr>0$ small,
\begin{itemize}
\item[$(i)$] there exist at least $k$ solutions $w_\vr^j\in\cap_{q\geq2}W^{1,q}(\R^3,\C^4)$
to \eqref{D2}, $j=1,\dots,k$;

\item[$(ii)$] $|w_\vr^j|$ possesses a (global)
maximum point $x_\vr^j$ in $\Lam_j$ such that
\[
\lim_{\vr\to 0}V(x_\vr^j)= c_j \, ,
\]
and
\[
|w_\vr^j(x)|\leq C \exp{\big( -\frac{c}{\vr}|x-x_\vr^j|\big)}
\]
for certain constants $C,c>0$;

\item[$(iii)$] setting $v_\vr^j(x)=w_\vr^j(\vr x + x_\vr^j)$, as
$\vr\to0$, we have
$v_\vr^j$ converges in $H^1$ to a least energy solution of
\[
-i\al\cdot\nabla v + a\bt v + c_j v = g(|v|)v \,.
\]
\end{itemize}
\end{Cor}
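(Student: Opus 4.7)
The plan is to apply Theorem~\ref{main theorem} (in the super-linear case) or Theorem~\ref{main theorem 2} (in the asymptotically linear case) separately to each of the bounded domains $\Lam_j$, $j=1,\dots,k$. Indeed, hypothesis \eqref{multi-domain} says precisely that each $\Lam_j$ satisfies the local minimum condition \eqref{local} with $\underline c$ replaced by $c_j$, so the hypotheses of the corresponding theorem are met for every $j$ individually. Thus, for each $j$ and each sufficiently small $\vr>0$, we obtain a solution $w_\vr^j\in\cap_{q\geq2}W^{1,q}(\R^3,\C^4)$ to \eqref{D2} together with a maximum point $x_\vr^j\in\Lam_j$ of $|w_\vr^j|$ satisfying $\lim_{\vr\to0}V(x_\vr^j)=c_j$, the exponential decay estimate, and the rescaled convergence $v_\vr^j(x)=w_\vr^j(\vr x+x_\vr^j)$ to a least energy solution of the limit equation with constant potential $c_j$. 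Hence assertions $(i)$, $(ii)$ and $(iii)$ hold for each family $\{w_\vr^j\}$.

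The only genuine point to verify is that the $k$ solutions produced in this way are pairwise distinct. Here one chooses $\vr_0>0$ small enough that, for every $\vr\in(0,\vr_0)$, the sets $\Lam_1,\dots,\Lam_k$ are all populated by a maximum point $x_\vr^j\in\Lam_j$. Since the $\Lam_j$ are mutually disjoint, the maximum points lie in disjoint sets, so the solutions cannot coincide: if $w_\vr^{j_1}=w_\vr^{j_2}$ with $j_1\neq j_2$, the modulus $|w_\vr^{j_1}|=|w_\vr^{j_2}|$ would have (global) maxima in two disjoint regions, which is impossible for a nonzero function. An alternative, equivalent way to distinguish the families is via the rescaled limits: the functions $v_\vr^j$ converge to least energy solutions of distinct limit equations (since $c_1<c_2<\dots<c_k$), so their energies, which are determined by the ground-state levels at the different constants $c_j$, are pairwise different for small~$\vr$.

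No further analytical work is required, because the exponential decay estimate furnished by the main theorems ensures that each $w_\vr^j$ is genuinely localized near $x_\vr^j\in\Lam_j$; combined with the disjointness of the $\Lam_j$, this makes the $k$ families $w_\vr^1,\dots,w_\vr^k$ honestly different for all small $\vr$. The main (and essentially only) obstacle was already handled in Theorems~\ref{main theorem} and~\ref{main theorem 2}; here one simply invokes them $k$ times and records that the resulting solutions are distinct. This finishes the proof.
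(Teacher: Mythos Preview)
Your proposal is correct and matches the paper's approach: the paper gives no formal proof of the corollary, calling it ``an immediate consequence of our main theorems'' and simply remarking afterwards that ``the solutions can be separated provided $\vr$ is small since $\Lam_j$ are mutually disjoint.'' Applying Theorem~\ref{main theorem} or Theorem~\ref{main theorem 2} once for each $\Lam_j$ is exactly what is intended.

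One small correction on the distinctness argument. Your first claim, that a nonzero function cannot have global maxima in two disjoint regions, is false as stated: the maximum value of $|w|$ could very well be attained at several points lying in different $\Lam_j$'s. The correct way to separate the solutions is your second (``alternative'') argument, and this is also why the hypothesis $c_1<c_2<\cdots<c_k$ is built into the statement. Concretely, the proof of Lemma~\ref{concentration} shows that the (rescaled) critical value satisfies $c_\vr\to\ga_{c_j}$ when the construction is carried out with $\Lam=\Lam_j$, and Lemma~\ref{limit equ link}(3) (resp.\ Lemma~\ref{asym}(3)) gives $\ga_{c_1}<\ga_{c_2}<\cdots<\ga_{c_k}$. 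Hence for small $\vr$ the $k$ energies $\Phi_\vr(w_\vr^j)$ are pairwise different, so the solutions themselves are distinct.
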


We remark here that in Corollary \ref{corollary} the solutions
can be separated provided $\vr$ is small since $\Lam_j$
are mutually disjoint. Furthermore, if $c_1$ in
\eqref{multi-domain} is a global minimum
of $V$, then  Corollary \ref{corollary} describes
a multiple concentrating phenomenon generalizing the results in
\cite{Ding2012}.

It is standard that \eqref{D2} is equivalent to, by letting $u(x)=
w(\vr x)$,
\begin{\equ}\label{equivalent D1}
-i \al\cdot\nabla u + a\bt u + V_\vr(x) u = g(|u|)u
\end{\equ}
where $V_\vr (x)=V(\vr x)$. We will in the sequel focus on this
equivalent problem.
The proofs of Theorem \ref{main theorem} and Theorem
\ref{main theorem 2} are variational,
and rely upon a reduction argument and
a linking structure of the energy functional
associate to \eqref{equivalent D1}.
Since the functional is strongly indefinite,
we have to recover a compactness condition at some minimax level
when $\vr$ is small.
All we do is to build a modification of the energy functional.
In such a way, the functional is proved to satisfy the
so-called Cerami compactness
condition. And then, for $\vr$ sufficiently small,
a solution associate
to the linking level is indeed a solution to the original
equation. The modification of the functional corresponds
to a penalization technique "outside $\Lam$", and this is
why no other global assumptions are required for $V$.

There have been enormous investigations on existence
and concentration phenomenon of semi-classical
states of nonlinear Schr\"odinger
equation arising in the {\it non-relativistic} quantum mechanics:
\begin{\equ}\label{schrodinger}
\hbar^2\De w - V(x)w + f(w)=0 \quad w\in H^1(\R^n) \,.
\end{\equ}
It is the first time, Floer and Weinstein, in \cite{Floer},
proved in the one dimensional case and for $f(w)=w^3$
that a single spike solution concentrating around
any given non-degenerate critical point
of the potential $V(x)$. Oh \cite{O1, O2} extended
this result in higher dimension and
for $f(u)=|u|^{p-1}u$ $(1<p<N+2/N-2)$.
The arguments in \cite{Floer, O1, O2}
are based on a Lyapunov-Schmidt reduction
and rely on the uniqueness and non-degeneracy
of the ground state solutions of the
autonomous problems:
\[
-\De v + V(x_0) v = f(v) \quad v\in H^1(\R^n) \quad
(x_0\in \R^n) \,.
\]
Subsequently, variational methods were found suitable to such
issues and the existence of spike layer solutions in the semi-classical
limit has been established under various conditions of $V(x)$.
Particularly, initiated by Rabinowitz \cite{Rab}, the
existence of positive solutions of \eqref{schrodinger}
for small $\hbar>0$ is proved whenever
\[
\liminf_{|x|\to\infty}V(x)> \inf_{x\in\R^n}V(x) \,.
\]
And these solutions concentrate around the global
minimum points of $V$ when $\hbar\to0$,
as was shown by Wang \cite{Wang X}.
It should be pointed out that
M. Del Pino and P. Felmer in \cite{Del Pino} firstly
succeeded in proving a localized version of
the concentration behaviour of semi-classical
solutions. In \cite{Del Pino}, assuming $\inf V=V_0>0$ and
\eqref{local}
for some bounded domain $\Lam$, the authors showed the
existence of a single-peak solution which concentrates around the
minimum points of $V$ in $\Lam$. Their
approach depends on a penalization argument and Mountain-pass
theorem.
Note that, since the Schr\"odinger operator $-\De+V$
is bounded from below, techniques based on the Mountain-pass
theorem are well applied to the investigation.
For further related results, we refer the readers to
\cite{Ambrosetti1, Ambrosetti2, Byeon-JJ,
Byeon-Wang, Del Pino2, Jeanjean-Tanaka} and their references,
moreover, for certain results on solutions
with multiple spike patterns for Hamiltonian elliptic systems
we refer to \cite{MH}.

It is quite natural to ask if certain similar results can be
obtained for nonlinear Dirac equations arising in the {\it
relativistic} quantum mechanics. Mathematically, the problems in
Dirac equations are difficult because they are strongly indefinite
in the sense that both the negative and positive parts of the
spectrum of Dirac operator are unbounded and consist of essential
spectrums.  To this end, instead of the "Mountain-pass structure",
we need a deep insight into the linking structure of strongly
indefinite functional (\cite{Rab0}).
 To illustrate this point, we will see in
Section \ref{auxiliary results} how local properties of $V$ lead to
an essential linking structure of the strongly indefinite energy
functional.

An outline of this paper is as follows: Section \ref{TVF}
is  devoted to define the modification of the functional needed for
the proof of our main results, and prove some
preliminary results. In Section \ref{auxiliary results},
we prove some auxiliary results and linking structure.
Theorem \ref{main theorem} and
Theorem \ref{main theorem 2} are proved in Section \ref{PMT}.

\section{The variational framework}\label{TVF}

%\subsection{The functional setting and notations}

In the sequel, by $|\cdot|_q$ we denote the usual $L^q$-norm, and
$(\cdot,\cdot)_2$ the usual $L^2$-inner product. Let
$H_0=-i\al\cdot\nabla+a\bt$ denote the self-adjoint operator on
$L^2\equiv L^2(\R^3,\C^4)$ with domain
$\mathcal{D}(H_0)=H^1\equiv H^1(\R^3,\C^4)$. It is
well known that $\sa(H_0)=\sa_c(H_0)=\R\setminus(-a,a)$
where $\sa(\cdot)$ and $\sa_c(\cdot)$ denote the spectrum and
the continuous spectrum. Thus the space $L^2$ possesses the
orthogonal decomposition:
\begin{equation}\label{l2dec}
L^2=L^+\oplus L^-,\ \ \ \ u=u^++u^-
\end{equation}
so that $H_0$ is positive definite (resp. negative definite) in
$L^+$ (resp. $L^-$). Let $E:=\mathcal{D}(|H_0|^{1/2})=H^{1/2}$
be equipped with the inner product
$$
\inp{u}{v}=\Re(|H_0|^{1/2}u,|H_0|^{1/2}v)_2
$$
and the induced norm $\|u\|=\inp{u}{u}^{1/2}$, where $|H_0|$
and $|H_0|^{1/2}$ denote respectively the absolute value of
$H_0$ and the square root of $|H_0|$. Since
$\sa(H_0)=\R\setminus(-a,a)$, one has
\begin{equation}\label{l2ineq}
a|u|_2^2\leq\|u\|^2 \quad  \mathrm{for\ all\ }u\in E.
\end{equation}
Note that this norm is equivalent to the usual $H^{1/2}$-norm, hence
$E$ embeds continuously into $L^q$ for all $q\in[2,3]$ and compactly
into $L_{loc}^q$ for all $q\in[1,3)$.

It is clear that $E$ possesses the following decomposition
\begin{equation}\label{Edec}
E=E^+\oplus E^-\ \ \mathrm{with\ \ }E^{\pm}=E\cap L^{\pm},
\end{equation}
orthogonal with respect to both $(\cdot,\cdot)_2$ and
$\inp{\cdot}{\cdot}$ inner products. And remarkably, this decomposition
of $E$ induces also a natural decomposition of $L^q$ for every $q\in(1,+\infty)$:
\begin{Prop}\label{lpdec}
Let $E^+\op E^-$ be the decomposition of $E$ according to the positive
and negative part of $\sa(H_0)$. Then, set $E^\pm_q:=E^\pm\cap L^q$ for $q\in(1,\infty)$,
there holds
\[
L^q={\rm cl}_q\, E^+_q\op {\rm cl}_q\, E^-_q
\]
with ${\rm cl}_q$ denoting the closure in $L^q$. More precisely, there exists $d_q>0$
for every $q\in(1,\infty)$ such that
\[
d_q|u^\pm|_q\leq |u|_q \quad {\rm for\ all\ } u\in E\cap L^q.
\]
\end{Prop}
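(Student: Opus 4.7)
The plan is to realize the orthogonal projectors $P^{\pm}\colon L^2\to L^{\pm}$ as Fourier multipliers and invoke the Mikhlin--H\"ormander multiplier theorem. On the Fourier side the symbol of $H_0$ is the matrix $\al\cdot\xi+a\bt$, which squares to $(|\xi|^2+a^2)I_4$; hence the spectral projectors onto the positive and negative parts of $\sa(H_0)$ are Fourier multipliers with matrix symbols
\[
m^{\pm}(\xi)=\frac{1}{2}\bigg(I\pm\frac{\al\cdot\xi+a\bt}{\sqrt{|\xi|^2+a^2}}\bigg).
\]
Because $a>0$, these symbols are $C^{\infty}(\R^3)$, globally bounded, and a direct computation gives the Mikhlin-type bound $|\xi|^{|\gamma|}|\pa^{\gamma}m^{\pm}(\xi)|\leq C_{\gamma}$ for every multi-index $\gamma$ and every $\xi\neq 0$. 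The Mikhlin--H\"ormander theorem then provides, for each $q\in(1,\infty)$, a constant $C_q>0$ with $|P^{\pm}u|_q\leq C_q|u|_q$ initially on the Schwartz class $\cs(\R^3,\C^4)$, and by density on all of $L^q$. Thus $P^{\pm}$ extends to a bounded projection of $L^q$ onto a closed subspace.

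For $u\in E\cap L^q$ the components $u^{\pm}$ produced by \eqref{l2dec} coincide with $P^{\pm}u$, so the multiplier bound immediately yields $d_q|u^{\pm}|_q\leq |u|_q$ with $d_q:=1/C_q$, the estimate claimed in the proposition. To obtain the direct-sum decomposition, take $u\in L^q$ and pick Schwartz approximants $\va_n\to u$ in $L^q$. Since $m^{\pm}$ preserves $\cs(\R^3,\C^4)$, each $P^{\pm}\va_n$ lies in $\cs(\R^3,\C^4)\cap L^{\pm}\subset E^{\pm}_q$, and $L^q$-continuity of $P^{\pm}$ forces $P^{\pm}u\in\mathrm{cl}_q E^{\pm}_q$; writing $u=P^+u+P^-u$ gives the existence of the decomposition. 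Uniqueness follows because $P^+$ is the identity on $E^+_q$ and vanishes on $E^-_q$, so by $L^q$-continuity the same identities persist on the closures $\mathrm{cl}_q E^{\pm}_q$; applying $P^{\pm}$ to any competing splitting recovers its pieces and shows $\mathrm{cl}_q E^+_q\cap \mathrm{cl}_q E^-_q=\{0\}$.

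The principal obstacle is verifying that $m^{\pm}$ satisfies the Mikhlin hypothesis globally. At the origin the mass term $a\bt$ prevents any singularity and keeps $m^{\pm}$ smooth there; as $|\xi|\to\infty$, $m^{\pm}$ behaves like a zeroth-order homogeneous symbol, so differentiation in $\xi$ produces an extra decay factor of order $|\xi|^{-1}$ each time, yielding the required bound $|\xi|^{|\gamma|}|\pa^{\gamma}m^{\pm}|\leq C_{\gamma}$. Once this analytic input is in place, the rest of the argument is routine functional-analytic bookkeeping.
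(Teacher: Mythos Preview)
Your proof is correct and follows essentially the same route as the paper: both arguments write the spectral projectors $P^{\pm}$ as Fourier multipliers with smooth bounded matrix symbol, verify the derivative bound $|\partial^{\gamma}m^{\pm}(\xi)|\lesssim |\xi|^{-|\gamma|}$, and invoke a classical $L^q$-multiplier theorem (you cite Mikhlin--H\"ormander, the paper cites Marcinkiewicz from Stein). Your closed-form symbol $m^{\pm}(\xi)=\tfrac12\bigl(I\pm(\al\cdot\xi+a\bt)/\sqrt{|\xi|^2+a^2}\,\bigr)$ is in fact tidier than the paper's explicit $2\times 2$ block computation, and you supply a bit more detail on the direct-sum bookkeeping, but the substance is identical.
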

%For those $q\in(2,3]$, let
%$Z\subset E^+$ be a finite dimensional subspace, then there exists
%$d^Z_q>0$ such that
%\begin{equation}\label{lpdec}
%d^Z_q|u^\pm|_q^q\leq |u|_q^q \quad \text{for all } u\in Z\op E^-.
%\end{equation}
%We remark that Proposition \ref{lpdec} will imply the linking geometry
%for the super-linear problem where $q\in(2,3)$
%(as we will see in Lemma \ref{limit equ link} (1)).

In $L^q$'s (for $q\neq2$), by $\op$ we mean the topologically direct sum.
Before proving Proposition \ref{lpdec} we would like to introduce the following
definition for {\it Multipliers} (see \cite[Chapter 4]{Stein}) which plays an important role in our arguments.

\begin{Def}
Let $m$ be a bounded measurable function on $\R^n$, we associate a linear operator
$T_m$ on $L^2\cap L^q$ by $(T_m u){\hat\ }(\xi)=m(\xi)\hat u(\xi)$ where $\hat u$
denotes the Fourier transform of $u$. We say that $m$ is a multiplier for $L^q$
($1\leq q\leq\infty$) if whenever $u\in L^2\cap L^q$ then $T_m u \in L^q$ (notice
it is automatically in $L^2$), and $T_m$ is bounded, that is,
\begin{\equ}\label{def1}
|T_m u|_q\leq A\cdot |u|_q, \quad  u\in L^2\cap L^q  \text{ (with } A
\text{ independent of } u \text{)}.
\end{\equ}
\end{Def}

Observe that if \eqref{def1} is satisfied, and $p<\infty$, then $T_m$ has a unique bounded
extension to $L^q$, which again satisfies the same inequality.

\begin{proof}[Proof of Proposition \ref{lpdec}]
First we remark that in this context, the spatial domain is $\R^3$.
Now recall the definitions for the matrices $\sa_k$, $k=1,2,3$ (see Pauli matrices),
and let us study $H_0=-i\al\cdot\nabla+a\bt$. It is a differential operator with
constant coefficients. In the Fourier domain $\xi=(\xi_1,\xi_2,\xi_3)$, it becomes
the operator of multiplication by the matrix
\[
\hat H_0(\xi)=\left(
\begin{array}{cc}
0& \sum_{k=1}^3\xi_k\sa_k \\
\sum_{k=1}^3\xi_k\sa_k &0
\end{array} \right) + \left(
\begin{array}{cc}
a&0\\
0&-a
\end{array} \right).
\]
By classical calculus we have that $\hat H_0(\xi)$ has two eigenvalues: $\pm\sqrt{a^2+|\xi|^2}$.
Now, denote  $P^\pm$ the projections on $E$ with kernel $E^\mp$. We see that in the Fourier domain,
$P^\pm$ are multiplication operators by bounded smooth matrix-valued functions of $\xi$:
\[
(P^+u){\hat\ }(\xi)=\Big(\frac12+\frac{a}{2\sqrt{a^2+|\xi|^2}}\Big)
\left(
\begin{array}{cc}
%\begin{array}{cc}
%1&0 \\
%0&1
%\end{array}
I
&  \Sigma(\xi) %\frac{\sum_{k=1}^3\xi_k\sa_k}{a+\sqrt{a^2+|\xi|^2}}
\\[2pt]%[2pt] \hline \\[-10pt]
 \Sigma(\xi)  %\frac{\sum_{k=1}^3\xi_k\sa_k}{a+\sqrt{a^2+|\xi|^2}}
&  A(\xi) %\frac{\sqrt{a^2+|\xi|^2}-a}{a+\sqrt{a^2+|\xi|^2}}
\end{array} \right)
\left(\begin{array}{c}
\hat U\\[2pt]
%\hat u_2\\
%\hat u_3\\
\hat V
\end{array} \right)
\]
\[
(P^-u){\hat\ }(\xi)=\Big(\frac12+\frac{a}{2\sqrt{a^2+|\xi|^2}}\Big)
\left(
\begin{array}{cc}
%\frac{\sqrt{a^2+|\xi|^2}-a}{a+\sqrt{a^2+|\xi|^2}}
A(\xi)& -\Sigma(\xi)
%\frac{-\sum_{k=1}^3\xi_k\sa_k}{a+\sqrt{a^2+|\xi|^2}}
 \\[2pt]%[2pt] \hline \\[-10pt]
%\frac{-\sum_{k=1}^3\xi_k\sa_k}{a+\sqrt{a^2+|\xi|^2}}
-\Sigma(\xi) & I
%\begin{array}{cc}
%1&0 \\
%0&1
%\end{array}
\end{array} \right)
\left(\begin{array}{c}
\hat U\\[2pt]
%\hat u_2\\
%\hat u_3\\
\hat V
\end{array} \right)
\]
with $I$ being the $2\times2$ identity matrix and
\[
A(\xi)=\frac{\sqrt{a^2+|\xi|^2}-a}{a+\sqrt{a^2+|\xi|^2}}\cdot I, \quad
\Sigma(\xi)=\sum_{k=1}^3\frac{\xi_k\sa_k}{a+\sqrt{a^2+|\xi|^2}}.
\]
Here we have used the notation $\hat u=(\hat U,\hat V)\in\C^4$ and
$\hat U=(\hat u_1,\hat u_2)\in \C^2$, $\hat V=(\hat u_3,\hat u_4)\in\C^2$.

In order that $P^\pm$ are multipliers for $L^q$, we need to use the
{\it Marcinkiewicz multiplier theorem} on $\R^3$ (see \cite[Chapter 4, Theorem 6']{Stein}).
A direct calculation shows that, for each component, the absolute value
of all $k$-th ($0< k\leq3$) order partial derivatives
for the multiplication functions are bounded by $B/|\xi|^k$ for some constant $B>0$.
And hence, as an immediate consequence,
 $P^\pm$ are {\it multipliers} for $L^q$ for all $q\in(1,\infty)$.
This implies that $P^\pm$ are continuous with respect to the $L^q$-norms. By
noting that $P^\pm(E^\mp)=\{0\}$, one easily sees that $P^\pm$ extend to continuous
projections on $L^q$ (still denoted by $P^\pm$) with $P^\pm({\rm cl}_q E^\mp_q)=\{0\}$.
And this completes the proof.
\end{proof}

\begin{Rem}
It is of great importance for the projections from $H^{1/2}:=E=E^+\op E^-$
onto $E^+$ (or $E^-$) to be continuous in the $L^q$'s and not only in $H^{1/2}$.
This is not the case for every direct sum in $H^{1/2}$. In fact, the proof of
Proposition \ref{lpdec} implies on the splitting of $L^q$'s that:
For every $q\in (1,\infty)$, $L^q$ can be split into topologically direct sum
of two (infinite dimensional) subspaces
which, accordingly, are the positive and negative projected spaces of the Dirac operator
$H_0$.
\end{Rem}

%This decomposition induces also
%a natural decomposition of $L^p$ (thanks to \cite[Theorem 2.10]{Brezis}),
%hence there is $d_p>0$ such that
%\begin{equation}\label{lpdec}
%d_p|u^\pm|_p^p\leq|u|_p^p\ \ \mathrm{for\ all\ }u\in E.
%\end{equation}

To introduce the variational formulation of problem \eqref{equivalent D1},
we define the "energy" functional
\begin{\equ}\label{X0}
\aligned
\Phi_\vr(u)&=\frac12\int_{\R^3}H_0 u \cdot \bar u + \frac12 \int_{\R^3} V_\vr(x)|u|^2
-\int_{\R^3}G(|u|)\\
&=\frac12\big(
\|u^+\|^2-\|u^-\|^2 \big) +\frac12 \int_{\R^3} V_\vr(x)|u|^2
-\Psi(u)
\endaligned
\end{\equ}
for $u=u^+ + u^-\in E$. %where
%\[
%\Psi(u)=\int_{\R^3}G(|u|) \, .
%\]
Standard arguments show that, under our assumptions, $\Phi_\vr\in C^2(E,\R)$ and any
critical point of $\Phi_\vr$ is a (weak) solution to \eqref{equivalent D1}.

Next, we introduce a modification of \eqref{X0},
afterwards, we will prove the modified functional
satisfies the $(C)_c$ condition.
Choose $\xi>0$ be the value at which
$g'(\xi)\xi+g(\xi)=\frac{a-|V|_\infty}{2}$. Let us consider
$\tilde{g}\in C^1(0,\infty)$ such that
\[
\frac{d}{ds}\big(
\tilde g (s)s \big)=\left\{
\aligned
& g'(s)s+g(s) \quad {\rm if\ } s<\xi \\
& \frac{a-|V|_\infty}{2} \quad {\rm if\ } s>\xi
\endaligned \right. \, ,
\]
and define
\begin{\equ}\label{mod}
f(\cdot,s)=\chi_\Lam g(s) + (1-\chi_\Lam)\tilde g(s) \, ,
\end{\equ}
where $\Lam$ is a bounded domain as in the assumptions of
Theorem \ref{main theorem} and $\chi_\Lam$ denotes its
characteristic function. One should keep in mind here that
$\Lam$ has to be rescaled when we consider the modified
rescaled equation \eqref{equivalent D1}.

It is standard to check that
$(g_1)$ and $(g_3)$ implies that $f$ is a Caratheodory function
and it satisfies
\begin{itemize}
\item[$(f_1)$] {\it $f_s(x,s)$ exists everywhere, $f(x,s)s=o(s)$
uniformly in $x$ as $s\to0$};
\item[$(f_2)$] {\it $0\leq f(x,s)s\leq g(s)s$ for all $x$};
\item[$(f_3)$] {\it $0< 2F(x,s)\leq f(x,s)s^2\leq \frac{a-|V|_\infty}{2}s^2$ for all $x\not\in \Lam$ and $s>0$};
\item[$(f_4)$]
\begin{itemize}
\item[$(i)$] {\it if $(g_2)$ is satisfied, then
 $0< F(x,s)\leq \frac{1}{\theta}f(x,s)s^2$ for all $x\in\Lam$ and $s>0$},
\item[$(ii)$] {\it if $(g_2')$ is satisfied, then $\widehat F(x,s)>0$
if $s>0$};
\end{itemize}
\item[$(f_5)$] {\it $\frac{d}{ds}\big( f(x,s)s \big)\geq 0$ for all $x$
and $s>0$};
\item[$(f_6)$] {\it either $(g_2)$ or $(g_2')$ is satisfied,
$\widehat F(x,s)\to \infty$ as $s\to\infty$
uniformly in $x$}.
\end{itemize}
Here we used the notation $F(x,s)=\int_0^s f(x,t)t\, dt$ and
$\widehat F(x,s)=\frac12 f(x,s)s^2-F(x,s)$. Now, we define
the modified functional $\widetilde\Phi_\vr:E\to\R$ as
\[
\widetilde\Phi_\vr(u)=\frac12\big(
\|u^+\|^2-\|u^-\|^2 \big) +\frac12 \int_{\R^3} V_\vr(x)|u|^2
-\Psi_\vr(u) \,,
\]
where $\Psi_\vr(u)=\int_{\R^3} F(\vr x,|u|)$. Then, we see that
$\widetilde\Phi_\vr\in C^2(E,\R)$.

We show next $\widetilde\Phi_\vr$ satisfies the compactness condition.
%For ease of notations, let us denote
%\[
%\widehat F(x,s)=\frac12 f(x,s)s^2-F(x,s) \,.
%\]
In virtue of $(f_4)(i)$, we have
\begin{\equ}\label{tilde F}
\widehat F(x,s)\geq \frac{\theta-2}{2\theta}f(x,s)s^2\geq
\frac{\theta-2}{2}F(x,s)>0
\end{\equ}
for all $x\in\Lam$ and $s>0$ provided $(g_2)$ is satisfied.
Recall that, by $(f_1)$ and $(f_2)$, there exist $r_1>0$ small
enough and $a_1>0$ such that
\begin{\equ}\label{s<r1}
f(x,s)\leq \frac{a-|V|_\infty}4 \quad \text{for all $s\leq r_1$, $\ x\in\R^3$}
\end{\equ}
and if $(g_2)$ is satisfied,
for $s\geq r_1$, $f(x,s)\leq a_1 s^{p-2}$, so
$\big(f(x,s)s\big)^{\sa_0-1}\leq a_2 s$ with
\[
\sa_0:=\frac{p}{p-1} \,,
\]
which, jointly with $(f_4)(i)$, yields (see \eqref{tilde F})
\begin{\equ}\label{g-sigma0 estimate}
\big(f(x,s)s\big)^{\sa_0}\leq a_2 f(x,s)s^2\leq a_3 \widehat{F}(x,s)
\quad \text{for all $s\geq r_1$ and $x\in\Lam$}.
\end{\equ}
\begin{Lem}\label{C-c condition}
For each $\vr>0$, let $\{u_n\}$ be a sequence such that
$\widetilde\Phi_\vr(u_n)$ is bounded and
$(1+\|u_n\|)\widetilde\Phi_\vr'(u_n)\to0$. Then $\{u_n\}$
has a convergent subsequence.
\end{Lem}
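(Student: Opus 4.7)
The plan is to establish, in order, (i) boundedness of $\{u_n\}$ in $E$, (ii) that any weak-limit point of $\{u_n\}$ is a critical point of $\widetilde\Phi_\vr$, and (iii) that the weak convergence is upgraded to strong convergence. All three steps exploit in an essential way the bound $|V|_\infty<a$ from $(V_1)$ together with the "small $f$" modification $(f_3)$ outside $\Lam$.

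For (i), I would start from the Cerami identity
\[
\int_{\R^3}\widehat F(\vr x,|u_n|)\,dx \;=\; \widetilde\Phi_\vr(u_n)-\tfrac12\widetilde\Phi_\vr'(u_n)u_n \;=\; O(1).
\]
Under the super-linear hypothesis $(g_2)$, I would combine this with \eqref{g-sigma0 estimate} to get a uniform $L^{\sigma_0}$-bound for $f(\vr x,|u_n|)u_n$ on $\Lam/\vr$ (with $\sigma_0=p/(p-1)$), and with $(f_3)$ on the complement. Testing $\widetilde\Phi_\vr'(u_n)$ against $u_n^+-u_n^-$ and estimating the nonlinear term by H\"older on $\Lam/\vr$ (conjugate exponent $\sigma_0'=p<3$ together with Sobolev embedding and Proposition~\ref{lpdec} to control $u_n^\pm$ in $L^p$) and by Cauchy--Schwarz outside, and finally absorbing the $|V_\vr|$-term using $|V|_\infty<a$, would yield $\|u_n\|^2\leq C(1+\|u_n\|)$. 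Under the asymptotically linear hypothesis $(g_2')$ a direct Ambrosetti--Rabinowitz argument is unavailable; I would proceed by contradiction, assuming $\|u_n\|\to\infty$ and setting $w_n=u_n/\|u_n\|$. If $w_n\rightharpoonup w\not\equiv 0$, then on the positive-measure set $\{w\neq 0\}$ we have $|u_n|\to\infty$, so $(f_6)$ and Fatou force $\int\widehat F(\vr x,|u_n|)\to\infty$, contradicting the Cerami identity. If $w\equiv 0$, dividing the identity $\widetilde\Phi_\vr'(u_n)(u_n^+-u_n^-)=o(1)$ by $\|u_n\|^2$ gives
\[
1+o(1)=\int_{\R^3}\bigl(f(\vr x,|u_n|)-V_\vr(x)\bigr)\bigl(|w_n^+|^2-|w_n^-|^2\bigr)\,dx,
\]
whose $\Lam/\vr$-part is $o(1)$ by local compactness while its complementary part is at most $\tfrac{a+|V|_\infty}{2a}<1$ by $(f_3)$, again a contradiction.

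Once $\{u_n\}$ is bounded, I would pass to a subsequence with $u_n\rightharpoonup u$ in $E$, $u_n\to u$ in $L^q_{\loc}$ for $q\in[1,3)$ and $u_n\to u$ a.e. Dominated/Vitali convergence applied to the nonlinear term tested against any $\varphi\in C_c^\infty(\R^3,\C^4)$ identifies $u$ as a critical point of $\widetilde\Phi_\vr$. For strong convergence, set $v_n=u_n-u$, so $v_n\rightharpoonup 0$ in $E$, and hence $v_n^\pm=P^\pm v_n\rightharpoonup 0$ in $E$ as well (the projections are orthogonal with respect to $\inp{\cdot}{\cdot}$); compact embedding $E\hookrightarrow L^q_{\loc}$ then gives $v_n^\pm\to 0$ in $L^q_{\loc}$ for $q<3$. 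Testing $[\widetilde\Phi_\vr'(u_n)-\widetilde\Phi_\vr'(u)]$ against $v_n^+-v_n^-$, expanding, and using weak convergence together with suitable convergence for $(f(\vr x,|u_n|)-f(\vr x,|u|))u$, I arrive at
\[
\|v_n\|^2 \;=\; \int_{\R^3}\bigl(f(\vr x,|u_n|)-V_\vr(x)\bigr)\bigl(|v_n^+|^2-|v_n^-|^2\bigr)\,dx + o(1).
\]
Splitting once more: on the bounded set $\Lam/\vr$ H\"older (plus either $f\leq c_1(1+|u_n|^{p-2})$ or $f\leq b$) and the local $L^q$-convergence of $v_n^\pm$ give $o(1)$; on the complement $(f_3)$ together with $|V|_\infty<a$ bounds the integral by $\tfrac{a+|V|_\infty}{2a}\|v_n\|^2$, a contraction factor strictly less than $1$. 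Absorbing yields $\|v_n\|\to 0$, as required.

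The main technical obstacle is step (i) in the asymptotically linear case: the failure of a super-quadratic condition forces the "non-vanishing vs.\ vanishing" dichotomy above and uses $(f_6)$ to rule out blow-up on a positive-measure set while exploiting the uniform bound $f\leq b$ from $(g_2')$ to rule out vanishing. A secondary technicality throughout step (iii) is that the decomposition $E=E^+\oplus E^-$ is \emph{non-local}, so the transfer of local compactness to $v_n^\pm$ relies on the $E$-orthogonality of the projections, and the nonlinear estimates that accompany it depend on Proposition~\ref{lpdec} to control $u_n^\pm$ in the $L^q$-norms that appear.
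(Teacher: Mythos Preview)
Your proposal is correct and, for the super-linear boundedness step and for the strong-convergence step (iii), it follows the paper's proof essentially line by line (the paper also tests $\widetilde\Phi_\vr'(u_n)$ against $u_n^+-u_n^-$, absorbs the $V_\vr$-term via $|V|_\infty<a$, splits into $\Lam_\vr$ and its complement using $(f_3)$, and finishes on $\Lam_\vr$ by local compactness).

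The one genuine difference is in the asymptotically linear boundedness argument. You argue by a weak-limit dichotomy for $w_n=u_n/\|u_n\|$: if $w_n\rightharpoonup w\neq0$ you use $(f_6)$ and Fatou to blow up $\int\widehat F$, while if $w_n\rightharpoonup0$ you exploit that $\Lam_\vr$ is bounded (for fixed $\vr$) so that $w_n^\pm\to0$ in $L^2(\Lam_\vr)$, reducing the divided identity to the complement where $(f_3)$ yields the contraction factor $\tfrac{a+|V|_\infty}{2a}<1$. The paper instead uses a Jeanjean-style level-set decomposition $\Om_n(0,\rho)\cup\Om_n(\rho,r)\cup\Om_n(r,\infty)$ of $\R^3$ according to the values of $|u_n|$: the Cerami bound $\int\widehat F\leq C$ forces $|\Om_n(r,\infty)|\to0$ (via $d(r):=\inf_{s>r}\widehat F\to\infty$ from $(f_6)$) and $\int_{\Om_n(\rho,r)}|v_n|^2\to0$, and then the three pieces of $\int f\,|v_n||v_n^+-v_n^-|$ are each made $<\ell/3$, contradicting \eqref{infequ1}. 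Your route is shorter and leans directly on the penalization geometry (boundedness of $\Lam_\vr$), whereas the paper's argument is ``domain-free'' in the sense that it never uses local compactness and would apply to more general asymptotically linear problems without a bounded penalization set; on the other hand, it requires the quantitative positivity $c_\rho^r=\inf_{\rho\le s\le r}\widehat F/s^2>0$ in addition to $(f_6)$.
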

\begin{proof}
We first show that the sequence $\{u_n\}$ is bounded in $E$.
In fact, the
representation of $\widetilde\Phi_\vr$ implies that there is $C>0$ such that
\begin{\equ}\label{up bdd}
C\geq \widetilde\Phi_\vr(u_n)-\frac12\widetilde\Phi_\vr'(u_n)u_n
=\int \widehat F(\vr x,|u_n|)
\end{\equ}
and
\begin{\equ}\label{Cc-o1}
\aligned
o(1)=&\,\widetilde\Phi_\vr'(u_n)(u_n^+-u_n^-)\\
=&\,\|u_n\|^2
 +\Re\int V_\vr(x)u_n\cdot\ov{(u_n^+-u_n^-)} \\
&\,-\Re\int f(\vr x,|u_n|)u_n\cdot\ov{(u_n^+-u_n^-)} \,.
\endaligned
\end{\equ}

{\it Case 1.} $(f_4)(i)$ occurs.

By the definition of $f$ and \eqref{Cc-o1}, we have
\begin{\equ}\label{N1}
\aligned
&\,\|u_n\|^2-|V|_\infty \int |u_n|\cdot|u_n^+-u_n^-|\\
\leq& \int f(\vr x,|u_n|)|u_n|\cdot|u_n^+-u_n^-|+o(1) \\
\leq& \int_{\Lam_\vr} f(\vr x,|u_n|)|u_n|\cdot|u_n^+-u_n^-|
+\frac{a-|V|_\infty}2 \int |u_n|\cdot|u_n^+-u_n^-| \\
&\, +o(1) \,,
\endaligned
\end{\equ}
where $\Lam_\vr:=\{x\in\R^3:\vr x\in\Lam\}$. Thus, from
\eqref{s<r1} and \eqref{g-sigma0 estimate}, we easily check that
\[
\aligned
&\,\frac{a-|V|_\infty}{4a}\|u_n\|^2\\
\leq&
\int_{\{x\in\Lam_\vr:|u_n(x)|\geq r_1\}}
f(\vr x,|u_n|)|u_n|\cdot|u_n^+-u_n^-| + o(1) \\
\leq&\bigg( \int_{\{x\in\Lam_\vr:|u_n(x)|\geq r_1\}}
\big(f(\vr x,|u_n|)|u_n|\big)^{\sa_0} \bigg)^{1/\sa_0} \cdot
|u_n^+-u_n^-|_p\\
&\, + o(1) \,.
\endaligned
\]
It follows from \eqref{tilde F},
\eqref{up bdd} and  $E$ embeds
continuously into $L^p$, we find
\[
\frac{a-|V|_\infty}{4a}\|u_n\|^2\leq C_1 \|u_n\| + o(1) \,.
\]
Then $\{u_n\}$ is bounded in $E$ as desired.

{\it Case 2.} $(f_4)(ii)$ occurs.

Assume contrarily that $\|u_n\|\to\infty$ as $n\to\infty$ and set
$v_n=u_n/\|u_n\|$. Then $|v_n|_2^2\leq C_2$ and $|v_n|_3^2\leq C_3$.
It follows from \eqref{l2ineq} and \eqref{Cc-o1} that
\[
\aligned
o(1)=&\,\|u_n\|^2\bigg(
\|v_n\|^2 + \Re\int V_\vr(x)v_n\cdot\ov{(v_n^+-v_n^-)} \\
&\,-\Re\int f(\vr x,|u_n|)v_n\cdot\ov{(v_n^+-v_n^-)} \, \bigg)  \\
\geq&\,\|u_n\|^2\bigg(
\frac{a-|V|_\infty}{a}-
\Re\int f(\vr x,|u_n|)v_n\cdot\ov{(v_n^+-v_n^-)} \, \bigg) \,.
\endaligned
\]
Thus
\begin{\equ}\label{infequ1}
\liminf_{n\to\infty} \,\Re\int f(\vr x,|u_n|)v_n\cdot\ov{(v_n^+-v_n^-)}
\geq \ell:=\frac{a-|V|_\infty}{a} \,.
\end{\equ}
To get a contradiction, let us first set
\[
d(r):=\inf\big\{
\widehat F(\vr x, s): \, x\in\R^3, \text{ and } s>r \big\} \,,
\]
\[
\Om_n(\rho,r):=\big\{ x\in\R^3: \, \rho\leq|u_n(x)|<r \big\} \,,
\]
and
\[
c_\rho^r:=\inf\bigg\{
\frac{\widehat F(\vr x,s)}{s^2}: \, x\in\R^3, \text{ and }
\rho\leq s\leq r \bigg\} \,.
\]
By $(f_6)$, $d(r)\to\infty$ as $r\to\infty$ and by definition
\[
\widehat F\big( \vr x, u_n(x) \big) \geq c_\rho^r|u_n(x)|^2 \quad
\text{for all } x\in\Om_n(\rho,r) \,.
\]
From \eqref{up bdd}, we have
\[
C\geq \int_{\Om_n(0,\rho)}\widehat F\big(\vr x, u_n(x)\big)
+c_\rho^r\int_{\Om_n(\rho,r)}|u_n|^2 + d(r)|\Om_n(r,\infty)| \,.
\]
Observe that $|\Om_n(r,\infty)|\leq C/d(r)\to0$ as $r\to\infty$
unformly in $n$, and, for any fixed $0<\rho<r$,
\[
\int_{\Om_n(\rho,r)}|v_n|^2=\frac1{\|u_n\|^2}
\int_{\Om_n(\rho,r)}|u_n|^2\leq \frac{C}{c_\rho^r\|u_n\|^2}\to0
\]
as $n\to\infty$.

Now let us choose $0<\de<\ell/3$. By $(f_1)$ there is $\rho_\de>0$
such that $|f(\vr x, s)|<\frac{\de}{C_2}$ for all $s\in[0,\rho_\de]$.
Consequently,
\[
\int_{\Om_n(0,\rho)} |f(\vr x, |u_n|)| \cdot |v_n| \cdot
|v_n^+-v_n^-| \leq \frac{\de}{C_2} |v_n|_2^2\leq \de
\]
for all $n$. Recall that, by $(g_1)$, $(g_2')$ and \eqref{mod},
$f(\vr x, s)\leq b$ for all $(x,s)$. Using H\"{o}lder inequality
we can take $r_\de$ large so that
\[
\aligned
&\int_{\Om_n(r_\de,\infty)}|f(\vr x, |u_n|)| \cdot |v_n| \cdot
|v_n^+-v_n^-|\\
\leq&\, b\int_{\Om_n(r_\de,\infty)} |v_n| \cdot
|v_n^+-v_n^-| \\
\leq&\, b \cdot |\Om_n(r_\de,\infty)|^{1/6} \cdot
 |v_n|_2 \cdot |v_n^+-v_n^-|_3  \\
\leq&\, C_b|\Om_n(r_\de,\infty)|^{1/6}
\leq \de
\endaligned
\]
for all $n$. Moreover, there is $n_0$ such that
\[
\aligned
&\int_{\Om_n(\rho_\de,r_\de)}|f(\vr x, |u_n|)| \cdot |v_n| \cdot
|v_n^+-v_n^-|  \\
\leq&\, b \int_{\Om_n(\rho_\de,r_\de)} |v_n| \cdot
|v_n^+-v_n^-|  \\
\leq&\, b\cdot |v_n|_2 \Big(
\int_{\Om_n(\rho_\de,r_\de)} |v_n|^2 \Big)^{1/2}
\leq \de
\endaligned
\]
for all $n\geq n_0$. Therefore, for $n\geq n_0$,
\[
\int f(\vr x, |u_n|)|v_n||v_n^+-v_n^-| \leq 3\de<\ell \,,
\]
which contradicts \eqref{infequ1}.

To prove the compactness, we recall some
direct observations: since $E$ embeds compactly
into $L_{loc}^q$ for all $q\in[1,3)$, the boundedness of $\{u_n\}$
implies that there exists $u\in E$ satisfying (after passing to a
subsequence if necessary)
\[
\text{$u_n\rightharpoonup u$ in $E$, \quad $u_n\to u$ in $L_{loc}^q$}
\]
for $q\in[1,3)$. Set $z_n=u_n-u$, we remark that $\{z_n\}$ is bounded
and $z_n\rightharpoonup 0$ in $E$ and $z_n\to0$ in $L_{loc}^q$ as
$n\to\infty$ for $q\in[1,3)$.

We conclude from $\{u_n\}$ is a bounded $(P.S.)$ sequence that
\begin{\equ}\label{L1}
o(1)=\inp{u_n^+}{z_n^+}+\Re\int V_\vr(x)u_n\cdot \ov{z_n^+}
-\Re\int f(\vr x,|u_n|)u_n\cdot \ov{z_n^+} \,,
\end{\equ}
\begin{\equ}\label{L2}
0=\inp{u^+}{z_n^+}+\Re\int V_\vr(x)u\cdot \ov{z_n^+}
-\Re\int f(\vr x,|u|)u\cdot \ov{z_n^+} \,,
\end{\equ}
\begin{\equ}\label{L3}
o(1)=-\inp{u_n^-}{z_n^-}+\Re\int V_\vr(x)u_n\cdot \ov{z_n^-}
-\Re\int f(\vr x,|u_n|)u_n\cdot \ov{z_n^-} \,,
\end{\equ}
\begin{\equ}\label{L4}
0=-\inp{u^-}{z_n^-}+\Re\int V_\vr(x)u\cdot \ov{z_n^-}
-\Re\int f(\vr x,|u|)u\cdot \ov{z_n^-} \,.
\end{\equ}
On the other hand, we find
\begin{\equ}\label{L5}
\left\{
\aligned
&\Re\int f(\vr x,|u|)u\cdot \ov{z_n^+}=o(1) \,,\\
&\Re\int f(\vr x,|u|)u\cdot \ov{z_n^-}=o(1) \,,\\
&\Re\int f(\vr x,|u_n|)u\cdot\ov{(z_n^+-z_n^-)}=o(1) \,.
\endaligned \right.
\end{\equ}
Therefore, again with the definition of $f$,
we deduce from \eqref{L1}-\eqref{L5} that
\[
\frac{a-|V|_\infty}{4a}\|z_n\|^2\leq
\Re\int_{\Lam_\vr}f(\vr x,|u_n|)z_n\cdot\ov{(z_n^+-z_n^-)}+o(1) \,.
\]
Since $\Lam_\vr$ is bounded for any fixed $\vr$, we have $\|z_n\|=o(1)$
as $n\to\infty$, which completes the proof.
\end{proof}

The pervious lemma makes it possible to use the critical point theory
to find critical points of $\widetilde\Phi_\vr$. We will formulate an
appropriate minimax level for $\widetilde\Phi_\vr$.

First set, for $r>0$, $B_r=\{u\in E:\|u\|\leq r\}$, and for
$e\in E^+\setminus\{0\}$
\[
E_e:=E^-\oplus\R^+ e
\]
with $\R^+=[0,+\infty)$. It follows from $(g_1)$ and $(f_2)$ that there
exists $C>0$ such that
\begin{\equ}\label{N2}
F(x,s)\leq \frac{a-|V|_\infty}{4}s^2+C s^p \quad \text{for all } s\geq0 \,.
\end{\equ}
So we have:
\begin{Lem}\label{link1}
There are $r>0$ and $\tau>0$, both independent of $\vr$, such that
$\widetilde\Phi_\vr|_{B_r^+}\geq0$ and $\widetilde\Phi_\vr|_{S_r^+}\geq\tau$,
where
\[
B_r^+=B_r\cap E^+=\{u\in E^+:\|u\|\leq r\},
\]
\[
S_r^+=\pa B_r^+=\{u\in E^+:\|u\|= r\}.
\]
\end{Lem}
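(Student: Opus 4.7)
The plan is to restrict to the subspace $E^+$, where the indefinite quadratic form $\tfrac12(\|u^+\|^2-\|u^-\|^2)$ becomes positive definite, and then to absorb both the linear potential term and the nonlinear term $\Psi_\vr$ into a fraction of $\|u\|^2$ by means of the spectral gap inequality \eqref{l2ineq} and the pointwise growth bound \eqref{N2}. This will yield an estimate of the schematic form $\widetilde\Phi_\vr(u)\geq c_1\|u\|^2-c_2\|u\|^p$ with $p>2$ and constants independent of $\vr$, from which the conclusion is immediate by taking $\|u\|$ small.

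First, for $u\in E^+$ one has $u^-=0$ and the functional reduces to
\[
\widetilde\Phi_\vr(u)=\tfrac12\|u\|^2+\tfrac12\int V_\vr(x)|u|^2-\int F(\vr x,|u|)\,.
\]
Using $(V_1)$ together with \eqref{l2ineq} gives
\[
\Bigabs{\tfrac12\int V_\vr(x)|u|^2}\leq \tfrac{|V|_\infty}{2}\,|u|_2^2\leq \tfrac{|V|_\infty}{2a}\,\|u\|^2,
\]
so the quadratic part is bounded below by $\tfrac{a-|V|_\infty}{2a}\|u\|^2$; note that $|V_\vr|_\infty=|V|_\infty$, so this is $\vr$-uniform.

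Next, I would apply \eqref{N2} to the nonlinear term, followed by the continuous Sobolev embedding $E\hookrightarrow L^p$ (with the $p\in(2,3)$ provided by $(g_2)$; in the asymptotically linear case one first splits $\{|u|\leq s_0\}$ from $\{|u|>s_0\}$ and uses $(f_1)$ and $(g_2')(i)$ to recover the same form of \eqref{N2} for any fixed $p\in(2,3)$):
\[
\int F(\vr x,|u|)\leq \tfrac{a-|V|_\infty}{4}|u|_2^2+C|u|_p^p\leq \tfrac{a-|V|_\infty}{4a}\|u\|^2+C'\|u\|^p,
\]
where again $C'$ depends only on $V$ and the Sobolev constant. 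Combining the two estimates,
\[
\widetilde\Phi_\vr(u)\geq \tfrac{a-|V|_\infty}{4a}\|u\|^2-C'\|u\|^p=\|u\|^2\Bigl(\tfrac{a-|V|_\infty}{4a}-C'\|u\|^{p-2}\Bigr).
\]

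Finally, since $p>2$, one chooses $r>0$ small enough that $C'r^{p-2}\leq \tfrac{a-|V|_\infty}{8a}$. Then the bracket is $\geq \tfrac{a-|V|_\infty}{8a}>0$ whenever $\|u\|\leq r$, which immediately gives $\widetilde\Phi_\vr(u)\geq 0$ on $B_r^+$ and
\[
\widetilde\Phi_\vr(u)\geq \tau:=\tfrac{a-|V|_\infty}{8a}\,r^2>0\quad\text{on }S_r^+.
\]
Both $r$ and $\tau$ depend only on $a$, $|V|_\infty$, the Sobolev constant and the constant $C$ in \eqref{N2}, hence not on $\vr$.

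There is no real obstacle here; the only point requiring care is verifying that every constant is independent of $\vr$, which follows from $|V_\vr|_\infty=|V|_\infty$, the $\vr$-uniformity of the growth bound \eqref{N2} in its first argument, and the fact that the Sobolev embedding constants are intrinsic to $E$.
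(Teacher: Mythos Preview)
Your proof is correct and follows essentially the same approach as the paper's own proof: restrict to $E^+$, absorb the potential term via $|V_\vr|_\infty=|V|_\infty$ and \eqref{l2ineq}, control $\Psi_\vr$ by \eqref{N2} and Sobolev embedding, and arrive at the same lower bound $\tfrac{a-|V|_\infty}{4a}\|u\|^2-C'\|u\|^p$. Your added remark on recovering the form of \eqref{N2} in the asymptotically linear case (any fixed $p\in(2,3)$ works since $g$ is bounded) is a helpful clarification the paper leaves implicit.
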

\begin{proof}
Recall that $|u|_p^p\leq C_p\|u\|^p$ for all $u\in E$ by
Sobolev's embedding theorem. The conclusion follows easily because,
for $u\in E^+$
\[
\aligned
\widetilde\Phi_\vr (u)&=\frac12 \|u\|^2-\frac{1}{2}\int V_\vr(x)|u|^2
 -\Psi_\vr (u)\\
 &\geq\frac12 \|u\|^2-\frac{|V|_\infty}{2}|u|_2^2
 -\bkt{\frac{a-|V|_\infty}{4}|u|_2^2+C|u|_p^p}\\
 &\geq\frac{a-|V|_\infty}{4a}\|u\|^2-C'\|u\|^p
\endaligned
\]
with $C,C'>0$ independent of $u$ and $p>2$ (see \eqref{N2}).
\end{proof}

Let $\msk_\vr:=\big\{ u\in E\setminus\{0\}:\, \widetilde\Phi_\vr'(u)=0 \big\}$
be the critical set of $\widetilde\Phi_\vr$.
By virtue of Lemma \ref{C-c condition}, using the same iterative
argument of \cite[Proposition 3.2]{Sere2} and \cite[Lemma 3.19]{Tian1},
we obtain the following

\begin{Lem}\label{critical points in W1,q}
If $u\in\msk_\vr$ with $|\widetilde\Phi_\vr(u)|\leq C$. Then,
for any $q\geq2$, $u\in W^{1,q}(\R^3,\C^4)$ with
$\|u\|_{W^{1,q}}\leq C_q$, where $C_q$ depends only on $C$ and $q$;
\end{Lem}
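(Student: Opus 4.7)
The strategy is the standard two-step procedure: derive an \emph{a priori} $E$-bound, then bootstrap in $L^q$-spaces via $L^q$-regularity for $H_0$.

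For the energy bound, apply the first half of the proof of Lemma \ref{C-c condition} to the single point $u$. The inequality $\widetilde\Phi_\vr(u)-\tfrac12\widetilde\Phi_\vr'(u)u = \int\widehat F(\vr x,|u|)\leq C$ together with the identity obtained by testing $\widetilde\Phi_\vr'(u)=0$ against $u^+-u^-$---which is exactly \eqref{Cc-o1} with the $o(1)$ replaced by $0$---yields $\|u\|\leq M(C)$ in both subcases $(f_4)(i)$ and $(f_4)(ii)$ via the arguments given there. The continuous embedding $E\hookrightarrow L^q$ for $q\in[2,3]$ then gives $|u|_q\leq M_q(C)$.

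For the bootstrap, we use that $H_0^2=-\De+a^2 I$, so $H_0^{-1}=H_0(-\De+a^2)^{-1}$ defines a bounded operator $L^q(\R^3,\C^4)\to W^{1,q}(\R^3,\C^4)$ for every $q\in(1,\infty)$ (by classical $L^q$-theory for $-\De+a^2$, or alternatively by the Marcinkiewicz multiplier argument already used in the proof of Proposition \ref{lpdec}). Writing the Euler--Lagrange equation as $H_0 u=R(u)$ with $R(u):=-V_\vr u+f(\vr x,|u|)u$, we obtain $\|u\|_{W^{1,q}}\leq c_q\|R(u)\|_{L^q}$. From $(f_2)$ together with either $(g_2)(i)$ or $(g_2')(i)$, $|R(u)|\leq K(|u|+|u|^{p-1})$ in the super-linear case ($p\in(2,3)$), and $|R(u)|\leq K|u|$ in the asymptotically linear case. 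Combined with Sobolev's $W^{1,r}\hookrightarrow L^{3r/(3-r)}$ ($r<3$) and $W^{1,r}\hookrightarrow L^\infty$ ($r>3$), one iterates: starting from $u\in L^3$, each step sends the current integrability exponent $r$ to $r'=3r/\bigl(3(p-1)-r\bigr)$ as long as $r<3(p-1)$. The unique fixed point of this map, $r_*=3(p-2)$, lies below $3$, so the iteration strictly increases $r$ from $r_0=3$ and, being monotone but unable to converge in $(3,\infty)$, crosses $3(p-1)$ after finitely many steps, whence $u\in L^\infty$. The asymptotically linear case is immediate. Once $u\in L^\infty\cap L^2$, the pointwise bound on $R(u)$ gives $R(u)\in L^q$ for every $q\in[2,\infty]$, and a final application of the $L^q$-estimate delivers $\|u\|_{W^{1,q}}\leq C_q$ depending only on $C$ and $q$.

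The main technical obstacle is the super-linear bootstrap: since $p-1>1$, each nonlinear step costs a factor $1/(p-1)$ in integrability and Sobolev embedding must more than compensate at every iteration; the hypothesis $p<3$ is precisely what secures termination in finitely many steps with uniform constants. Once $L^\infty$-regularity is in hand, the conclusion follows from a single further application of the $L^q$-estimate for $H_0$.
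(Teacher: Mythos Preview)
Your argument is correct and coincides with the paper's approach: an $E$-bound extracted from the proof of Lemma~\ref{C-c condition}, followed by the standard $L^q$-bootstrap for $H_0$. The paper does not spell out the iteration but simply refers to \cite[Proposition~3.2]{Sere2} and \cite[Lemma~3.19]{Tian1}, which is precisely the scheme you outline (those references also absorb the minor bookkeeping needed at the first step when $p$ is close to $3$, where $r/(p-1)<2$ and the linear and nonlinear pieces of $R(u)$ must be treated separately before the map $r\mapsto 3r/(3(p-1)-r)$ takes over).
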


\begin{Rem}\label{uniformly estimate}
Let $\msl_\vr$ be the set of all least energy solutions of
$\widetilde\Phi_\vr$.
Let $u\in\msl_\vr$, then $|\widetilde\Phi_\vr(u)|\leq C$ for
some constant $C>0$ (this will be proved in
Lemma \ref{c-vr leq}). Therefore, as a consequence of Lemma
\ref{critical points in W1,q}, we see (together with the Sobolev
embedding theorem) that there exist $C_\infty>0$ independent of $\vr$
such that
\[
|u|_\infty\leq C_\infty \quad \text{for all } u\in\msl_\vr \,.
\]
\end{Rem}

\section{Some auxiliary results}\label{auxiliary results}

Firstly, it is easily checked that,
for any $x_0\in\R^3$, setting
$\tilde V_\vr(x)= V\big( \vr(x+x_0) \big)$,
if $\tilde u$ is a solution of
\[
-i\al\cdot\nabla \tilde u + a\bt \tilde u +
\tilde V_\vr(x) \tilde u = g(|\tilde u|) \tilde u
\]
then $u(x)=\tilde u(x-x_0)$ solves \eqref{equivalent D1}.
Thus, without loss of generality, we can
assume that $0\in\Lam$ and $V(0)=\min_\Lam V$.

\subsection{Functional reduction}
Let us mention here that a reduction of an strongly indefinite functional
to a functional on $E^+$ is well know under stronger differentiability conditions,
see for example \cite{Ackermann, MH, MY} and the papers concerned with Dirac operator.
In \cite{MH, MY} a reduction in two
steps has been performed: first to $E^+$ and then to a Nehari manifold on $E^+$.

Motivated by the above papers, we shall use the reduction approaches to find
critical points of $\Phi_\vr$. %The problems we are interested in include
%the asymptotically linear nonlinearities,
%the Nehari-type manifold may not be homeomorphic to
%$S^+$ (the unit sphere in $E^+$), so we need to develop some new techniques.
Suppose $(f_1)$-$(f_6)$ are satisfied,
for a fixed $u\in E^+$, let $\phi_u:E^-\to\R$
defined by $\phi_u(v)=\widetilde\Phi_\vr(u+v)$. We infer
\begin{\equ}\label{N3}
\phi_u(v) \leq \frac{a+|V|_\infty}{2a}\|u\|^2
-\frac{a-|V|_\infty}{2a}\|v\|^2 \,.
\end{\equ}
Moreover, we have, for any $v,w\in E^-$,
\begin{\equ}\label{N4}
\aligned
\phi_u''(v)[w,w]=&\,-\|w\|^2-\int V_\vr(x)|w|^2
 -\Psi_\vr''(u+v)[w,w]  \\
\leq&\, - \frac{a-|V|_\infty}a \|w\|^2 \,.
\endaligned
\end{\equ}
Indeed, a direct calculation shows that
\[
\aligned
\Psi_\vr''(u+v)[w,w]=&\,\int \Big[
f_s(\vr x,|u+v|)|u+v| \bigg(
\frac{(u+v)\cdot\ov w}{|u+v|\cdot|w|} \bigg)^2 \\
&\, + f(\vr x,|u+v|)  \Big] |w|^2 \,dx  \,.
\endaligned
\]
Then, it follows from $(f_5)$ and $\Big(
\frac{(u+v)\cdot\ov w}{|u+v|\cdot|w|} \Big)^2\leq1$ that
$\Psi_\vr''(u+v)[w,w]\geq0$. As a consequence of \eqref{N3}
and \eqref{N4} (that is anti-coercion and concavity),
there exists a unique $h_\vr:E^+\to E^-$ such
that
\[
\widetilde\Phi_\vr(u+h_\vr(u))=\max_{v\in E^-}\widetilde\Phi_\vr(u+v)\,.
\]
From the definition of $h_\vr$, we have
\begin{\equ}\label{identity5}
\aligned
0\leq&\,\widetilde\Phi_\vr\big( u+h_\vr(u) \big) -
\widetilde\Phi_\vr( u )  \\
=&\, -\frac12\|h_\vr(u)\|^2 +\frac12\int V_\vr(x)|u+h_\vr(u)|^2
- \Psi_\vr\big( u+h_\vr(u) \big) \\
&\, - \frac12\int V_\vr(x)|u|^2 + \Psi_\vr(u)  \\
\leq&\, -\frac{a-|V|_\infty}{2a}\|h_\vr(u)\|^2 + \frac{|V|_\infty}a\|u\|^2
+\Psi_\vr(u)
\endaligned
\end{\equ}
for all $u\in E^+$. Hence
the boundedness of $\Psi_\vr$ implies that of $h_\vr$. Set
$\pi: E^+\op E^- \to E^-$ by
\[
\pi(u,v)= P^-\circ\rr\circ\widetilde\Phi_\vr'(u+v) \,,
\]
where $P^-:E\to E^-$ is the projection and $\rr:E^*\to E$
denotes the isomorphism induced from
the Riesz representation theorem. Noting that for every
$u\in E^+$, we have
\begin{\equ}\label{N5}
\pi\big( u,h_\vr(u) \big)=0 \,.
\end{\equ}
Since $\pi_v(u,v)=P^-\circ\rr\circ
\widetilde\Phi_\vr''(u+v)\big|_{E^-}$,
from \eqref{N4} it follows that $\pi_v\big( u,h_\vr(u) \big)$
is an isomorphism with
\begin{\equ}\label{N6}
\big\| \pi_v\big( u,h_\vr(u) \big)^{-1}
\big\|\leq \frac{a}{a-|V|_\infty}
\end{\equ}
for every $u\in E^+$. So \eqref{N5} and \eqref{N6} together with the
implicit function theorem imply the uniquely defined map
$h_\vr: E^+\to E^-$ is $C^1$ smooth with
\begin{\equ}\label{N7}
h_\vr'(u)=-\pi_v\big( u,h_\vr(u) \big)^{-1}\circ
\pi_u\big( u,h_\vr(u) \big) \,,
\end{\equ}
where $\pi_u(u,v):=P^-\circ\rr\circ
\widetilde\Phi_\vr''(u+v)\big|_{E^+}$.
Define
\[
I_\vr: E^+\to\R \,, \quad  I_\vr(u)=\widetilde\Phi_\vr(u+h_\vr(u)) \,.
\]
We see directly that critical points of $I_\vr$ and $\widetilde\Phi_\vr$
are in one-to-one correspondence via the injective map
$u\mapsto u+h_\vr(u)$ from $E^+$ into $E$.

It is clear that for any $u\in E^+$ and $v\in E^-$, by setting
$z=v-h_\vr(u)$ and $l(t)=\widetilde\Phi_\vr(u+h_\vr(u)+tz)$, one
has $l(1)=\widetilde\Phi_\vr(u+v)$, $l(0)=\widetilde\Phi_\vr(u+h_\vr(u))$
and $l'(0)=0$. Hence we deduce $l(1)-l(0)=\int_0^1(1-s)l''(s)ds$. Consequently,
we have
\[
\aligned
&\widetilde\Phi_\vr(u+v)-\widetilde\Phi_\vr(u+h_\vr(u)) \\
=&\,\int_0^1(1-s)\widetilde\Phi_\vr''(u+h_\vr(u)+sz)[z,z] \,ds\\
=&\,-\int_0^1(1-s)\Big(
\|z\|^2+\int V_\vr(x)|z|^2\,dx \Big) \,ds \\
&\, -\int_0^1(1-s)\Psi_\vr''(u+h_\vr(u)+sz)[z,z] \,ds \,,
\endaligned
\]
which implies
\begin{\equ}\label{identity1}
\aligned
&\int_0^1(1-s)\Psi_\vr''(u+h_\vr(u)+sz)[z,z] \,ds\\
&+\frac12\|z\|^2+\frac12\int V_\vr(x)|z|^2=
\widetilde\Phi_\vr(u+h_\vr(u))-\widetilde\Phi_\vr(u+v) \,.
\endaligned
\end{\equ}

\subsection{The limit equation}\label{TLE}

For $\mu\in(-a,a)$, assume $(g_1)$,$(g_3)$
and either $(g_2)$ or $(g_2')$ are satisfied,
let us consider the equation
\begin{\equ}\label{limit equ1}
-i\al\cdot\nabla u + a\bt u + \mu u = g(|u|)u, \qquad
u\in H^1(\R^3,\C^4) \,.
\end{\equ}
Its solutions are critical points of the functional
\[
\mst_\mu(u):=\frac12\big(
\|u^+\|^2-\|u^-\|^2 \big) +\frac\mu2\int |u|^2 - \Psi(u)
\]
defined for $u=u^++u^-\in E=E^+\op E^-$. Denote the critical
set, the least energy and the set of least energy solutions of
$\mst_\mu$ as follows
\[
\aligned
&\msk_\mu:=\{u\in E:\ \mst_\mu'(u)=0\},\\
&\ga_\mu:=\inf\{\mst_\mu(u):\
 u\in\msk_\mu\setminus\{0\}\},\\
&\msr_\mu:=\{u\in\msk_\mu:\
 \mst_\mu(u)=\ga_\mu,\
 |u(0)|=|u|_\infty\}.
\endaligned
\]
And as before, we introduce the following notations:
\[
\aligned
&\msj_\mu:E^+\to E^-, \quad \mst_\mu\big(u+\msj_\mu(u)\big)
=\max_{v\in E^-}\mst_\mu(u+v) \,; \\
&J_\mu: E^+\to\R, \quad J_\mu=\mst_\mu\big(
u+\msj_\mu(u) \big) \,.
\endaligned
\]
Remark that, from the definition of $\msj_\mu$, we have
\begin{\equ}\label{identity6}
\mst_\mu'\big(u+\msj_\mu(u))z=0
\quad \text{for all } z\in E^- \,.
\end{\equ}
And, similar to \eqref{identity5}, there holds
\begin{\equ}\label{msj-estimate}
\|\msj_\mu(u)\|^2\leq \frac{2|\mu|}{a-|\mu|}\|u\|^2
+\frac{2a}{a-|\mu|}\Psi_\vr(u) \,.
\end{\equ}

\subsubsection{The super-linear case}

The following lemma is from \cite{Ding2} (see also \cite{Ding2008})

\begin{Lem}\label{ding2008}
For the equation \eqref{limit equ1} we have:
\begin{itemize}
\item[$(i)$] $\msk_\mu\setminus\{0\}\not=\emptyset$, $\ga_\mu>0$
 and $\msk_\mu\subset\cap_{q\geq2}W^{1,q}$;

\item[$(ii)$] $\ga_\mu$ is attained and $\msr_\mu$
 is compact in $H^1(\R^3,\C^4)$;

\item[$(iii)$] there exist $C,c>0$ such that
$$|w(x)|\leq C\exp (-c|x|)$$
for all $x\in\R^3$ and $w\in\msr_\mu$.
\end{itemize}
\end{Lem}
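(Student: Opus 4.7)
The plan is to apply the reduction of Section \ref{TLE} together with a linking/concentration--compactness argument to the autonomous functional $\mst_\mu$. First I would check that $J_\mu: E^+\to\R$ defined by $J_\mu(u) = \mst_\mu(u+\msj_\mu(u))$ has a mountain--pass geometry: by $(g_1)$ and the subcritical growth $(g_2)(i)$, the argument of Lemma \ref{link1} (with $V_\vr$ replaced by the constant $\mu$ and $f$ by $g$) gives $r,\tau>0$ with $J_\mu|_{S_r^+}\geq\tau$, while the Ambrosetti--Rabinowitz inequality $(g_2)(ii)$ forces $J_\mu(te)\to -\infty$ as $t\to\infty$ for each $e\in E^+\setminus\{0\}$. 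This yields a well-defined minimax value $\ga_\mu > 0$ which coincides with the linking value $\inf_{e\in E^+\setminus\{0\}}\max_{u\in E_e}\mst_\mu(u)$.

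Next I would produce a bounded Cerami sequence $\{u_n\}\subset E^+$ for $J_\mu$ at this level. Boundedness follows essentially as in Case~1 of Lemma \ref{C-c condition}, using $\widehat G(s) \geq \frac{\theta-2}{2\theta}g(s)s^2$ from $(g_2)(ii)$ together with H\"older in the $L^{\sa_0}$-scale. The \textbf{main obstacle} is that $\mst_\mu$ is translation-invariant on $\R^3$, so the compactness argument of Lemma \ref{C-c condition} (which crucially exploited the fact that $\Lam_\vr$ is bounded) breaks down. I would resolve this via Lions' vanishing--nonvanishing dichotomy: if $\sup_{y\in\R^3}\int_{B_1(y)}|u_n|^2\to 0$, then $u_n\to 0$ in $L^p$ for $p\in(2,3)$ and consequently $J_\mu(u_n)\to 0$, contradicting $\ga_\mu>0$. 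Hence there exist $y_n\in\R^3$ with $\liminf_n \int_{B_1(y_n)}|u_n|^2 > 0$, and by the translation invariance of $\mst_\mu$ the sequence $u_n(\cdot+y_n)$ has a nonzero weak limit $w$ which is a critical point of $\mst_\mu$. Combined with the bootstrap of Lemma \ref{critical points in W1,q} (using $p<3$ in $(g_2)(i)$), this proves (i).

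For (ii), any critical point $w\in\msk_\mu\setminus\{0\}$ satisfies $\mst_\mu(w) = \int \widehat G(|w|) > 0$ by $(g_2)(ii)$, so $\ga_\mu>0$ is attained by the weak limit above. Compactness of $\msr_\mu$ would follow by taking a sequence $\{w_n\}\subset\msr_\mu$, translating so each $|w_n|$ attains its maximum at $0$ (so no further translation is needed thanks to the normalization in the definition of $\msr_\mu$), extracting a weak limit which is again a least-energy solution, and upgrading to strong $H^1$-convergence through a Brezis--Lieb splitting: convergence of $\mst_\mu$-values at the strict minimum $\ga_\mu$ rules out mass escaping to infinity. Finally for (iii), once $w\in L^\infty\cap\cap_{q\geq 2}W^{1,q}$ with $|w(x)|\to 0$ at infinity, applying $H_0$ to \eqref{limit equ1} and using $\al_j\al_k+\al_k\al_j = 2\de_{jk}I$ together with $\bt^2=I$ yields a Schr\"odinger-type inequality of the form $-\De|w|^2 + 2\bigl(a^2-\mu^2-o(1)\bigr)|w|^2 \leq 0$ outside a large ball (the $o(1)$ absorbing the nonlinear contribution via $(g_1)$); comparison with the supersolution $C\exp(-c|x|)$ for any $c<\sqrt{a^2-\mu^2}$ then gives the stated exponential decay.
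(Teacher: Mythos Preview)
Your sketch is a reasonable outline of how the lemma is actually proved, but you should be aware that the paper does \emph{not} prove it: the sentence immediately preceding the statement reads ``The following lemma is from \cite{Ding2} (see also \cite{Ding2008})'', and no argument is given beyond this citation. So there is no ``paper's own proof'' to compare against here; the authors simply import the result.

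That said, your plan matches the strategy in those references and also the machinery the paper develops elsewhere: the mountain-pass geometry for $J_\mu$, the boundedness via $\widehat G\geq\frac{\theta-2}{2\theta}g(s)s^2$, Lions' dichotomy to defeat translation invariance, and the bootstrap are all standard and correct. For part~(iii), note that the paper itself carries out exactly the Kato-inequality computation you describe --- but for $|w|$ rather than $|w|^2$ --- in Section~\ref{PMT} (see \eqref{PT4} and Lemma~\ref{exp decay}); squaring $H_0$ and using $\al_j\al_k+\al_k\al_j=2\de_{jk}$, $\al_k\bt+\bt\al_k=0$ gives $\De|w|\geq(a^2-\mu^2-g(|w|)^2)|w|$, and comparison with the fundamental solution of $-\De+(a^2-\mu^2)$ yields the decay. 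Your version with $|w|^2$ is equivalent but slightly less clean.
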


Observe that assumption $(g_2)(ii)$ implies that for any $\de>0$ there is a constant
$c_\de>0$ such that
\[
G(s)\geq c_\de s^\theta-\de s^2 \quad \text{for all } s\geq0.
\]
Directly, for $v\in E^-$, $u=te+v\in E_e$, we have that
\[
\aligned
\mst_\mu(u)&=\frac{t^2}2\|e\|^2-\frac{\|v\|^2}2+\frac\mu2\int|te+v|^2-\int G(|te+v|) \\
% &\leq \frac{t^2}2\|e\|^2-\frac{\|v\|^2}2+\frac{\mu+2\de}2\int|te+v|^2-c_\de\int|te+v|^\theta \\
 &\leq \frac{a+|\mu+2\de|}{2a}t^2\|e\|^2-\frac{a-|\mu+2\de|}{2a}\|v\|^2-c_\de\int|te+v|^\theta.
\endaligned
\]
And hence, by Proposition \ref{lpdec},
\[
\mst_\mu(u)\leq \frac{a+|\mu+2\de|}{2a}t^2\|e\|^2
-\frac{a-|\mu+2\de|}{2a}\|v\|^2-C_{\de,\theta}t^\theta|e|_\theta^\theta
\quad \text{on } E_e.
\]
As a consequence of the above estimates we have the following lemma,
the specific proofs can be found in \cite{Ding2012} (see also \cite{Ding2010,
Ding2008})

\begin{Lem}\label{limit equ link}
There hold the following properties:
\begin{itemize}
\item[$(1)$] For any $e\in E^+\setminus\{0\}$, we have
$\mst_\mu(u)\to -\infty$ provided $u\in E_e$
and $\|u\|\to\infty$.

\item[$(2)$] Set $\Ga_\mu=\big\{
\nu\in C([0,1],E^+):\, \nu(0)=0,\ J_\mu(\nu(1))<0 \big\}$,
we have
\[
\ga_\mu=\inf_{\nu\in\Ga_\mu}\max_{t\in[0,1]}
J_\mu(\nu(t))=\inf_{u\in E^+\setminus\{0\}}\max_{t\geq0}
J_\mu(tu) \,.
\]

\item[$(3)$] If $\mu_1>\mu_2$, then $\ga_{\mu_1}>\ga_{\mu_2}$.
\end{itemize}
\end{Lem}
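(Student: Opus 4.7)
The plan is to handle the three parts separately, relying on the estimate displayed just above the lemma and on the reduction $J_\mu = \mst_\mu\circ(\mathrm{id}+\msj_\mu)$ just introduced. For part $(1)$ I would simply read off the inequality displayed immediately before the statement: choosing $\delta > 0$ so small that $|\mu + 2\delta| < a$ (possible since $\mu\in(-a,a)$), for $u = te + v \in E_e$ with $t \geq 0$, $v \in E^-$ one gets
\[
\mst_\mu(u) \leq \frac{a+|\mu+2\delta|}{2a}\, t^2\|e\|^2 - \frac{a-|\mu+2\delta|}{2a}\|v\|^2 - C_{\delta,\theta}\, t^\theta\, |e|_\theta^\theta.
\]
The coefficient in front of $\|v\|^2$ is strictly negative, and the coefficient of $t^\theta$ is strictly negative because $e\neq 0$ forces $|e|_\theta>0$. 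Since $\theta>2$ and $\|u\|^2 = t^2\|e\|^2+\|v\|^2$, in either regime $t\to\infty$ or $\|v\|\to\infty$ the right-hand side tends to $-\infty$, proving $(1)$.

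For part $(2)$ I would sandwich three quantities. First, arguing as in Lemma~\ref{link1} with the constant $\mu$ in place of $V_\vr$, there are $r,\tau>0$ with $J_\mu(u)\geq\mst_\mu(u)\geq\tau$ on $S_r^+$. Any $\nu\in\Ga_\mu$ starts at $0$ and ends with $J_\mu(\nu(1))<0<\tau$, so by continuity it must cross $S_r^+$, giving $\max_t J_\mu(\nu(t))\geq\tau>0$. A linking/mountain-pass deformation applied to the reduced functional $J_\mu$ on $E^+$, with the Cerami compactness furnished by Lemma~\ref{ding2008}, then produces a critical point at the minimax level, hence $\ga_\mu \leq \inf_{\nu\in\Ga_\mu}\max_{t\in[0,1]} J_\mu(\nu(t))$. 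Next, part $(1)$ provides for every $e\in E^+\setminus\{0\}$ some $R_e>0$ with $J_\mu(R_e e)<0$, so $t\mapsto tR_e e$ belongs to $\Ga_\mu$, giving $\inf_\nu \max J_\mu(\nu(t))\leq \max_{t\geq 0} J_\mu(te)$ and hence $\inf_\nu\max \leq \inf_{e\in E^+\setminus\{0\}} \max_{t\geq 0} J_\mu(te)$. Finally, for $w\in\msr_\mu$ with $e:=w^+\neq 0$, hypothesis $(g_3)$ forces $t\mapsto J_\mu(te)$ to have a unique positive critical point, which must be $t=1$ (since $w=e+\msj_\mu(e)$ is a critical point of $\mst_\mu$); therefore $\max_{t\geq 0} J_\mu(te)=J_\mu(e)=\mst_\mu(w)=\ga_\mu$, and the sandwich closes with equalities throughout.

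For part $(3)$ fix $w\in\msr_{\mu_1}$ and set $e := w^+ \in E^+\setminus\{0\}$. For every $u\neq 0$ one has $\mst_{\mu_2}(u) = \mst_{\mu_1}(u) + \tfrac{\mu_2-\mu_1}{2}|u|_2^2 < \mst_{\mu_1}(u)$, so $J_{\mu_2}(te) \leq J_{\mu_1}(te)$ for all $t\geq 0$. By part $(1)$ and $\ga_{\mu_2}>0$, the map $t\mapsto J_{\mu_2}(te)$ attains its maximum at some $t^*>0$, and $w^* := t^*e+\msj_{\mu_2}(t^*e) \neq 0$ because its $E^+$-component $t^*e$ is nonzero. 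Applying part $(2)$ twice yields
\[
\ga_{\mu_2} \leq \max_{t\geq 0} J_{\mu_2}(te) = \mst_{\mu_2}(w^*) < \mst_{\mu_1}(w^*) \leq \max_{t\geq 0} J_{\mu_1}(te) = \ga_{\mu_1}.
\]

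The hard part is the linking/mountain-pass step in part $(2)$, namely producing a critical point of $J_\mu$ at the minimax level together with the Nehari-type uniqueness of the positive critical point along each half-line $\mathbb{R}^+ e$ under $(g_3)$; both are by now standard in this setting and are carried out in detail in~\cite{Ding2010,Ding2008,Ding2012}, which I would invoke rather than reconstruct the deformation argument from scratch.
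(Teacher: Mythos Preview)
Your outline follows the paper's approach closely: the paper itself does not prove this lemma but simply cites \cite{Ding2012,Ding2010,Ding2008}, and your parts $(1)$ and $(3)$ are correct (your chain in $(3)$ is exactly the argument the paper writes out later for the asymptotically linear analogue, Lemma~\ref{asym}$(3)$).

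There is one genuine misstep in part $(2)$: Lemma~\ref{ding2008} does \emph{not} furnish Cerami compactness for $J_\mu$. The limit functional $\mst_\mu$ is autonomous on $\R^3$ and hence $\Z^3$-translation invariant, so Palais--Smale and Cerami sequences for $J_\mu$ need not converge; what Lemma~\ref{ding2008} asserts is only that $\msr_\mu$ is compact \emph{after} pinning the maximum at the origin, which is a different statement. Consequently you cannot run a plain mountain-pass deformation to produce a critical point at the minimax level. The inequality $\ga_\mu\le d_\mu^1$ that you need is instead obtained in the cited references by the standard route for translation-invariant problems: the mountain-pass theorem without compactness yields a bounded $(PS)_{d_\mu^1}$ sequence, Lions' concentration-compactness extracts (after translating) a nontrivial weak limit $v$ which is a critical point, and Fatou's lemma applied to $\int\widehat G(|\cdot|)$ gives $\mst_\mu(v)\le d_\mu^1$, whence $\ga_\mu\le d_\mu^1$. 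Alternatively, one can argue purely through the Nehari structure as the paper does in Lemma~\ref{c-vr=d-vr} and Lemma~\ref{asym}$(2)$: every path in $\Ga_\mu$ crosses $\msm_\mu$, so $d_\mu^1\ge\inf_{\msm_\mu}J_\mu=d_\mu^2$, and the existence of the ground state then forces all three quantities to coincide. Since you ultimately defer to the same references the paper cites, this is not fatal, but the sentence invoking Cerami compactness from Lemma~\ref{ding2008} should be replaced by one of these two mechanisms.
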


Notice that, similar to \eqref{identity1}, we have for $u\in E^+$,
$v\in E^-$ and $z=v-\msj_\mu(u)$
\begin{\equ}\label{identity2}
\aligned
&\int_0^1(1-s)\Psi''(u+\msj_\mu(u)+sz)[z,z] \,ds\\
&+\frac12\|z\|^2+\frac\mu2\int |z|^2=
\mst_\mu(u+\msj_\mu(u))-\mst_\mu(u+v) \,.
\endaligned
\end{\equ}

\subsubsection{The asymptotically linear case}

Firstly, let $(E_s)_{s\in\R}$ denote the spectral family of
$H_0:=-i\al\cdot\nabla+a\bt$. Choose a number
$a+|V|_\infty<\ka<b$. Since $H_0$ is invariant under the
action of $\Z^3$, the subspace $Y_0:=(E_\ka-E_0)L^2$ is
infinite-dimensional, and
\begin{\equ}\label{Y0-ineq}
a|u|_2^2\leq\|u\|^2\leq\ka|u|_2^2 \quad
\text{for all } u\in Y_0 \,.
\end{\equ}
Take an element $e\in Y_0$ arbitrarily, we have

\begin{Lem}\label{ding book lemma 7.7}
There holds the following:
\begin{itemize}
\item[$(1)$] $\sup\mst_\mu(E_e)<+\infty$, and
$\mst_\mu(u)\to -\infty$ provided $u\in E_e$
and $\|u\|\to\infty$.

\item[$(2)$] For any $u\in E^+\setminus\{0\}$, taking $t\to\infty$,
then either $J_\mu(tu)\to +\infty$ or $J_\mu(tu)\to -\infty$.
\end{itemize}
\end{Lem}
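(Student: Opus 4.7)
The plan is to establish both parts via normalization together with Fatou-type arguments, exploiting the asymptotic linearity $g(s)\to b$ and the spectral bound $\|e\|^2\leq\ka|e|_2^2$ with $\ka<b$ (so that the nonlinear term eventually dominates the indefinite quadratic form on $Y_0\op E^-$). The coercivity claim in Part (1) will imply the boundedness above, while in Part (2) the dichotomy will follow from comparing the Rayleigh-type quotient $\|u\|^2/|u|_2^2$ with the threshold $b-\mu$.

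For Part (1), I would argue by contradiction: suppose $u_n=t_ne+v_n\in E_e$ with $\|u_n\|\to\infty$ yet $\mst_\mu(u_n)\not\to-\infty$. Using the $L^2$-orthogonality of $t_ne\in E^+$ and $v_n\in E^-$,
\begin{equation*}
\mst_\mu(u_n)=\frac{t_n^2}{2}\|e\|^2-\frac{1}{2}\|v_n\|^2+\frac{\mu}{2}\bigl(t_n^2|e|_2^2+|v_n|_2^2\bigr)-\int G(|u_n|).
\end{equation*}
If $\{t_n\}$ stays bounded while $\|v_n\|\to\infty$, then the grouping $-\tfrac12\|v_n\|^2+\tfrac{\mu}{2}|v_n|_2^2\leq-\tfrac{a-|\mu|}{2a}\|v_n\|^2\to-\infty$ combined with $G\geq 0$ is enough. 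Otherwise $t_n\to\infty$, and I rescale $w_n:=u_n/t_n=e+\tilde v_n$ with $\tilde v_n:=v_n/t_n$. The case $\|\tilde v_n\|\to\infty$ reduces to the previous one, so we may assume $\{\tilde v_n\}$ is bounded in $E^-$; then along a subsequence $\tilde v_n\rightharpoonup v^*$ weakly and a.e., whence $w_n\to w^*:=e+v^*\not\equiv0$ on a set of positive measure, because $E=E^+\op E^-$ is a direct sum and $e\neq0$. On $\{w^*\neq0\}$, $|u_n(x)|=t_n|w_n(x)|\to\infty$ pointwise, so $(g_2')(i)$ gives $G(|u_n|)/|u_n|^2\to b/2$ and Fatou yields
\begin{equation*}
\liminf_{n\to\infty}\frac{1}{t_n^2}\int G(|u_n|)\geq\frac{b}{2}|w^*|_2^2\geq\frac{b}{2}|e|_2^2.
\end{equation*}
Combined with $\|e\|^2\leq\ka|e|_2^2$ (and choosing $\ka$ inside the admissible window $a+|V|_\infty<\ka<b$ so that $\ka+\mu<b$ for the $\mu$ at hand), one gets $\limsup\mst_\mu(u_n)/t_n^2\leq\tfrac12(\ka+\mu-b)|e|_2^2<0$, forcing $\mst_\mu(u_n)\to-\infty$. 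Together with continuity and the trivial bound $-\int G\leq0$ on bounded subsets of $E_e$, this coercivity also yields $\sup_{E_e}\mst_\mu<+\infty$.

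For Part (2), I would exploit $J_\mu(tu)\geq\mst_\mu(tu)$. A direct dominated-convergence computation (using $G(s)\leq\tfrac{b}{2}s^2$ from $g\leq b$) gives
\begin{equation*}
\lim_{t\to\infty}\frac{\mst_\mu(tu)}{t^2}=\frac{1}{2}\bigl(\|u\|^2-(b-\mu)|u|_2^2\bigr).
\end{equation*}
If $\|u\|^2>(b-\mu)|u|_2^2$, this limit is strictly positive, so $J_\mu(tu)\geq\mst_\mu(tu)\to+\infty$. If $\|u\|^2<(b-\mu)|u|_2^2$, the same normalization argument as in Part (1), applied on $\R^+u\op E^-$ with $u$ in place of $e$, shows that $\mst_\mu$ is coercive from above on this subspace; then, since the sublevel set $\{\mst_\mu\geq-M\}\cap(\R^+u\op E^-)$ is bounded, the identity $J_\mu(tu)=\max_{v\in E^-}\mst_\mu(tu+v)$ together with $\|tu\|\to\infty$ forces $J_\mu(tu)\to-\infty$. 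The borderline case $\|u\|^2=(b-\mu)|u|_2^2$ is handled by invoking $(g_2')(ii)$: the divergence $\widehat G(s)\to\infty$ supplies a sub-leading negative correction that tips the limit to $-\infty$. The main obstacle I anticipate is precisely this borderline analysis, together with making the Fatou step effective---the strict inequality relies on $w^*\not\equiv0$, which in turn rests on the topological direct-sum structure of $E=E^+\op E^-$ (and its $L^q$-refinement from Proposition \ref{lpdec}); the assumption $(g_2')(ii)$, specific to the asymptotically linear regime, is what salvages the boundary analysis where the super-linear $(g_2)$-argument is unavailable.
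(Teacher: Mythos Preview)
Your Part~(1) argument is a standard coercivity-via-Fatou proof and is essentially what the reference the paper cites contains; it is fine. For Part~(2), however, your route is genuinely different from the paper's. The paper does \emph{not} split according to the Rayleigh quotient $\|u\|^2/|u|_2^2$; instead it assumes $\sup_{t\ge 0}J_\mu(tu)=M<\infty$ and exploits the differential identity
\[
\frac{d}{dt}J_\mu(tu)=\frac{2J_\mu(tu)}{t}-\frac{2}{t}\int \widehat G\bigl(|tu+\msj_\mu(tu)|\bigr),
\]
together with the boundedness of $\{\msj_\mu(tu)/t\}$ and $(g_2')(ii)$, to obtain $\frac{d}{dt}J_\mu(tu)\le -M/t$ for large $t$, whence $J_\mu(tu)\to-\infty$. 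This ODE-type argument is uniform in $u$ and requires no case analysis. Your trichotomy, by contrast, works directly with $\mst_\mu$ and yields the extra information of \emph{which} alternative occurs in terms of the position of $\|u\|^2$ relative to $(b-\mu)|u|_2^2$; your strict cases are correct.

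There is, however, a genuine mistake in your borderline case $\|u\|^2=(b-\mu)|u|_2^2$: the correction from $(g_2')(ii)$ pushes $J_\mu(tu)$ to $+\infty$, not $-\infty$. Indeed, in this case
\[
\mst_\mu(tu)=\int\Bigl(\tfrac{b}{2}\,t^2|u|^2-G(t|u|)\Bigr),
\]
and since $s\mapsto \tfrac{b}{2}s^2-G(s)$ is nondecreasing (its derivative is $(b-g(s))s\ge 0$) and bounded below by $\widehat G(s)\to\infty$, monotone convergence gives $\mst_\mu(tu)\to+\infty$, hence $J_\mu(tu)\ge\mst_\mu(tu)\to+\infty$. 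So the dichotomy still holds, but with the opposite sign from what you claimed; once you correct this, your approach is a valid alternative to the paper's.
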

\begin{proof}
For the proof of (1), we refer \cite[Lemma 7.7]{Ding2}. To show $(2)$,
let us first assume $\sup_{t\geq0}J_\mu(tu)=M<+\infty$. Following
\eqref{identity6}, a direct calculation shows
\begin{\equ}\label{identity7}
\aligned
\frac{d}{dt} J_\mu(tu)& = \frac1t J_\mu'(tu)tu
= \frac1t \mst_\mu'\big(tu+\msj_\mu(tu)\big)\big(
tu+\msj_\mu'(tu)tu\big)  \\
&=\frac1t \mst_\mu'\big(tu+\msj_\mu(tu)\big)\big(
tu+\msj_\mu(tu)\big)   \\
&=\frac{2J_\mu(tu)}t-\frac2t \int \widehat G
\big(|tu+\msj_\mu(tu)|\big) \,.
\endaligned
\end{\equ}
For $r>0$, we infer
\begin{\equ}\label{estimate G}
\aligned
& \int \widehat G
\big(|tu+\msj_\mu(tu)|\big) \\
\geq&\,
\int_{\big\{ x\in\R^3:|u+\msj_\mu(tu)/t|\geq r \big\}}
\widehat G \big(|tu+\msj_\mu(tu)|\big)   \\
\geq&\, \widehat G(rt) \cdot \textit{meas}
\big\{ x\in\R^3:|u+\msj_\mu(tu)/t|\geq r \big\} \,.
\endaligned
\end{\equ}
Since the family $\{\msj_\mu(tu)/t\}_{t>0}\subset E^-$
is bounded (due to \eqref{msj-estimate} and $(g_2')(i)$),
we must have $\textit{meas}
\big\{ x\in\R^3:|u+\msj_\mu(tu)/t|\geq r \big\}\geq\bar\de$
with some $\bar\de>0$ for all $t>0$ provided $r>0$ is small.
Indeed, if such $\bar\de$ does not exist, we then have
\[
\msj_\mu(t_ju)/t_j \rightharpoonup -u  \quad
\text{in } E
\]
for some sequence $\{t_j\}$. However, this will imply $u=0$ since
$u\in E^+$, which is a contradiction. Now, from \eqref{estimate G}
and $(g_2')(ii)$, we deduce that
\[
\aligned
\frac{d}{dt}J_\mu(tu) & \leq
\frac{2J_\mu(tu)}t -2 \bar\de  \cdot
\frac{\widehat G(rt)}{t}  \\
&\leq \frac{2J_\mu(tu)}{t}-
\frac{3M}{t}  \\
&\leq -\frac{M}{t}
\endaligned
\]
for $t$ sufficiently large. Thus we have
$J_\mu(tu)=\int_0^t \frac{d}{dt}J_\mu(tu)\to-\infty$
as $t\to +\infty$.
\end{proof}

As in Lemma \ref{limit equ link}, let us consider the
family
\[
\Ga_\mu=\big\{
\nu\in C([0,1],E^+):\, \nu(0)=0,\ J_\mu(\nu(1))<0 \big\} \,,
\]
and the minimax schemes
\[
d_\mu^1=\inf_{\nu\in\Ga_\mu}\max_{t\in[0,1]}
J_\mu(\nu(t))  \quad
\text{and} \quad
d_\mu^2=\inf_{u\in E^+\setminus\{0\}}\max_{t\geq0}
J_\mu(tu) \,.
\]

\begin{Lem}\label{asym}
For the asymptotically linear equation \eqref{limit equ1},
there holds:
\begin{itemize}
\item[$(1)$] $\msk_\mu\setminus\{0\}\not=\emptyset$, $\ga_\mu>0$
 and $\msk_\mu\subset\cap_{q\geq2}W^{1,q}$;
\item[$(2)$] $\ga_\mu$ is attained and $\ga_\mu=d_\mu^1=d_\mu^2$;
\item[$(3)$] if $\mu_1>\mu_2$, then $\ga_{\mu_1}>\ga_{\mu_2}$.
\end{itemize}
\end{Lem}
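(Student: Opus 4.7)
The plan is to exploit the reduced functional $J_\mu \colon E^+ \to \R$ from Section \ref{TVF} in conjunction with the geometric Lemma \ref{ding book lemma 7.7}, thereby reducing existence to a mountain--pass argument whose compactness rests on the asymptotic condition $(g_2')(ii)$. First I verify the mountain--pass geometry of $J_\mu$. The analogue of Lemma \ref{link1} (with the constant $\mu$ replacing $V_\vr$, valid because $|\mu|<a$ and $g(s)\leq c(1+s)$ under $(g_2')(i)$) provides $r,\tau>0$ with $J_\mu\geq\tau$ on $S_r^+$. For the descent direction, fix $e\in Y_0\setminus\{0\}$: Lemma \ref{ding book lemma 7.7}(1) yields $\sup_{E_e}\mst_\mu<\infty$, hence
\[
J_\mu(te)=\max_{v\in E^-}\mst_\mu(te+v)\leq\sup_{E_e}\mst_\mu<\infty,
\]
so the $+\infty$ alternative in Lemma \ref{ding book lemma 7.7}(2) is excluded and $J_\mu(te)\to-\infty$ as $t\to\infty$.

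Second, a standard mountain--pass/deformation argument yields a Cerami sequence $\{u_n\}\subset E^+$ with $J_\mu(u_n)\to d_\mu^1$ and $(1+\|u_n\|)\|J_\mu'(u_n)\|\to 0$; setting $\hat u_n:=u_n+\msj_\mu(u_n)$ gives a Cerami sequence for $\mst_\mu$ at the same level. The principal obstacle is compactness. I establish boundedness of $\{\hat u_n\}$ by copying Case~2 of Lemma \ref{C-c condition}: assuming $\|\hat u_n\|\to\infty$, normalize $v_n=\hat u_n/\|\hat u_n\|$, split $\int g(|\hat u_n|)\,v_n\cdot\overline{(v_n^+-v_n^-)}$ over the level sets $\Omega_n(0,\rho_\delta)$, $\Omega_n(\rho_\delta,r_\delta)$, $\Omega_n(r_\delta,\infty)$, and use the bound $g\leq b$ from $(g_2')(i)$ together with $\widehat G(s)\to\infty$ from $(g_2')(ii)$ (which forces $|\Omega_n(r,\infty)|\to 0$) to contradict the lower bound on $\liminf\Re\int g(|\hat u_n|)\,v_n\cdot\overline{(v_n^+-v_n^-)}$ derived from $\mst_\mu'(\hat u_n)(\hat u_n^+-\hat u_n^-)=o(1)$. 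To exclude vanishing I argue by Lions' lemma: $\sup_{y\in\R^3}\int_{B_1(y)}|\hat u_n|^2\to 0$ would give $|\hat u_n|_q\to 0$ for $q\in(2,3)$ and therefore $\mst_\mu(\hat u_n)\to 0$, contradicting $d_\mu^1\geq\tau>0$. The translation--invariance of \eqref{limit equ1} then lets me choose $y_n\in\R^3$ so that $\hat u_n(\cdot+y_n)\rightharpoonup u^\star\neq 0$, and a standard weak passage to the limit identifies $u^\star$ as a nontrivial critical point of $\mst_\mu$ with $\mst_\mu(u^\star)\leq d_\mu^1$. In particular $\msk_\mu\setminus\{0\}\neq\emptyset$ and $\ga_\mu\geq\tau>0$.

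Third, I establish the chain $\ga_\mu=d_\mu^1=d_\mu^2$. The inequality $d_\mu^1\leq d_\mu^2$ is routine: for each $u\in E^+\setminus\{0\}$ the ray $[0,T]u$, with $T$ chosen so that $J_\mu(Tu)<0$ by the dichotomy of Lemma \ref{ding book lemma 7.7}(2), is admissible in $\Ga_\mu$. Conversely, for any $u\in\msk_\mu\setminus\{0\}$ the identity \eqref{identity6}, combined with strict concavity of $\phi_u$ on $E^-$ (recall \eqref{N4}, applied with $\mst_\mu$ in place of $\widetilde\Phi_\vr$), gives $\msj_\mu(u^+)=u^-$, and $u^+\neq 0$ (else $\mst_\mu(u)\leq 0<\tau$); the identity \eqref{identity2} then yields $J_\mu(u^+)=\mst_\mu(u)=\max_{t\geq 0}J_\mu(tu^+)$, so $d_\mu^2\leq\ga_\mu$. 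Combined with $\ga_\mu\leq\mst_\mu(u^\star)\leq d_\mu^1$ from the previous step, this closes the chain and shows $\ga_\mu$ is attained. The regularity $\msk_\mu\subset\bigcap_{q\geq 2}W^{1,q}$ follows from the iterative bootstrap already invoked in Lemma \ref{critical points in W1,q}, which applies verbatim since $g$ is bounded by $(g_2')(i)$.

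For the strict monotonicity (3), let $\mu_1>\mu_2$, take $u\in\msr_{\mu_1}$, and pick $\bar t>0$ attaining $\max_{t\geq 0}J_{\mu_2}(tu^+)$ (its positivity follows from $\max_{t\geq 0}J_{\mu_2}(tu^+)\geq d_{\mu_2}^2=\ga_{\mu_2}>0=J_{\mu_2}(0)$, and existence from $J_{\mu_2}(tu^+)\leq J_{\mu_1}(tu^+)\to-\infty$). Using $\mst_{\mu_1}-\mst_{\mu_2}=\frac{\mu_1-\mu_2}{2}|\cdot|_2^2$ together with the maximality defining $J_{\mu_1}$, one has
\[
J_{\mu_2}(\bar t u^+)=J_{\mu_1}(\bar t u^+)-\frac{\mu_1-\mu_2}{2}\bigl|\bar t u^+ + \msj_{\mu_2}(\bar t u^+)\bigr|_2^2<J_{\mu_1}(\bar t u^+)\leq\ga_{\mu_1},
\]
the strict inequality holding because the $E^+$ component $\bar t u^+$ of $\bar t u^++\msj_{\mu_2}(\bar t u^+)$ is nonzero. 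Hence $\ga_{\mu_2}=d_{\mu_2}^2\leq J_{\mu_2}(\bar t u^+)<\ga_{\mu_1}$. The hardest step throughout is the compactness of Step~2: recovering both boundedness and non-vanishing of the Cerami sequence without the Ambrosetti--Rabinowitz condition, which is only possible thanks to $(g_2')(ii)$.
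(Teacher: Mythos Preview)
Your overall strategy is close to the paper's, but there is a genuine gap in your proof of the inequality $d_\mu^2\leq\ga_\mu$. You correctly observe that if $u\in\msk_\mu\setminus\{0\}$ then $\msj_\mu(u^+)=u^-$ and hence $J_\mu(u^+)=\mst_\mu(u)$. However, your claim that ``identity \eqref{identity2} then yields $J_\mu(u^+)=\max_{t\geq0}J_\mu(tu^+)$'' is not justified: \eqref{identity2} is a second-order expansion in the $E^-$ direction and says nothing about the behavior of $t\mapsto J_\mu(tu^+)$. All you know at this point is that $t=1$ is a \emph{critical} point of this map; without further input it could be a local minimum or a saddle, and the supremum over $t$ could be strictly larger than $J_\mu(u^+)$ (or even infinite), which would destroy the inequality $d_\mu^2\leq J_\mu(u^+)$. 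The paper closes this gap via the Ackermann-type second-order estimate: from $g'(s)s>0$ one checks
\[
\big(\Psi''(u)[u,u]-\Psi'(u)u\big)+2\big(\Psi''(u)[u,v]-\Psi'(u)v\big)+\Psi''(u)[v,v]>0,
\]
which forces $I''_\mu(z)[z,z]<0$ whenever $J_\mu'(z)z=0$, so that $t\mapsto J_\mu(tu)$ has at most one positive critical point, necessarily a strict maximum. Only then does $u^+$ lie on the generalized Nehari set and $d_\mu^2\leq J_\mu(u^+)$ follow. This same gap propagates to your part (3): the inequality $J_{\mu_1}(\bar t\,u^+)\leq\ga_{\mu_1}$ needs $J_{\mu_1}(u^+)=\max_{t\geq0}J_{\mu_1}(tu^+)$, which again requires the uniqueness argument.

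A smaller point: in (3) your displayed equality $J_{\mu_2}(\bar t u^+)=J_{\mu_1}(\bar t u^+)-\tfrac{\mu_1-\mu_2}{2}|\bar t u^++\msj_{\mu_2}(\bar t u^+)|_2^2$ should be an inequality $\leq$, since $\msj_{\mu_1}(\bar t u^+)\neq\msj_{\mu_2}(\bar t u^+)$ in general; the maximality defining $J_{\mu_1}$ only gives $\mst_{\mu_1}(\bar t u^++\msj_{\mu_2}(\bar t u^+))\leq J_{\mu_1}(\bar t u^+)$. The rest of the chain survives this correction. Apart from these issues, your route (producing a critical point directly at level $d_\mu^1$ via a Cerami mountain-pass sequence, rather than citing existence and then minimizing over $\msk_\mu$ as the paper does) is a legitimate and essentially equivalent variant.
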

\begin{proof}
Since $(1)$ is a direct consequence of \cite[Theorem 7.3]{Ding2}, we only
need to prove $(2)$ and $(3)$.

To show $(2)$, assume $\{u_n\}\subset\msk_\mu\setminus\{0\}$ such that
$\mst_\mu(u_n)\to\ga_\mu$. Clearly $\{u_n\}$ is a $(C)_c$ sequence, hence
is bounded. As is proved in \cite{Ding2}, $\{u_n\}$ is non-vanishing.
Since $\mst_\mu$ is $\Z^3$-invariant, up to a translation, we can assume
$u_n\rightharpoonup u\in\msk_\mu\setminus\{0\}$. Observe that, by Fatou's
lemma,
\[
\aligned
\ga_\mu&\leq
\mst_\mu(u)=\mst_\mu(u)-\frac12\mst_\mu'(u)u=\int \widehat G(|u|) \\
&\leq \liminf_{n\to\infty}\int \widehat G(|u_n|)=\liminf_{n\to\infty}
\Big(\mst_\mu(u_n)
-\frac12\mst_\mu'(u_n)u_n \Big)  \\
& =\ga_\mu \,,
\endaligned
\]
we find $\ga_\mu$ is attained. By noting that $\ga_\mu$ is also the least
energy of $J_\mu$, it is standard to check that
$\ga_\mu\leq d_\mu^1\leq d_\mu^2$.
To prove $d_\mu^2\leq\ga_\mu$ we first note
that if $s>0$,  by virtue of
$(g_1)$,
$g'(s)s>0$. Hence, if $u\in E\setminus\{0\}$
and $v\in E$, we have
\[
\aligned
&\big( \Psi''(u)[u,u]-\Psi'(u)u \big)
+2\big( \Psi''(u)[u,v]-\Psi'(u)v \big)
+\Psi''(u)[v,v] \\
=&\,\int g(|u|)|v|^2 + \int  g'
(|u|)|u| \bigg( |u| + \frac{\Re u\cdot v}{|u|} \bigg)^2
>0 \,.
\endaligned
\]
As a consequence of \cite[Theorem 5.1]{Ackermann}, if
$z\in E^+\setminus\{0\}$ satisfies $J_\mu'(z)z=0$ then
$J_\mu''(z)[z,z]<0$. Therefore, let $u\in E^+\setminus\{0\}$,
we find the function $t \mapsto J_\mu(tu)$
has at most one nontrivial critical point $t=t(u)>0$.
So, denoted by
\[
\msm_\mu:=\big\{
t(u)u: \, u\in E^+\setminus\{0\}, \ t(u)<\infty \big\} \,,
\]
we have $\msm_\mu\neq\emptyset$ due to $\ga_\mu$ is attained.
Meanwhile, we notice
\[
d_\mu^2=\inf_{z\in \msm_\mu}J_\mu(z) \,.
\]
Hence we have $d_\mu^2\leq\ga_\mu$ since $u^+\in\msm_\mu$ provided
$u\in\msr_\mu$.

Finally, $(3)$ comes directly because, if $u\in\msr_{\mu_1}$,
we already have  $u^+$
is a critical point of $J_{\mu_1}$ and
$\ga_{\mu_1}=J_{\mu_1}(u^+)=\max_{t\geq0}J_{\mu_1}(tu^+)$. Let $\tau>0$
such that $J_{\mu_2}(\tau u^+)=
\max_{t\geq0}J_{\mu_2}(t u^+)$, we deduce
\[
\aligned
\ga_{\mu_1}&=J_{\mu_1}(u^+)=\max_{t\geq0}J_{\mu_1}(tu^+)\\
&\geq J_{\mu_1}(\tau u^+) = \mst_{\mu_1}\big( \tau u^++
\msj_{\mu_1}(\tau u^+) \big) \\
&\geq \mst_{\mu_1}\big( \tau u^++ \msj_{\mu_2}(\tau u^+) \big) \\
&\geq \mst_{\mu_2}
\big( \tau u^++ \msj_{\mu_2}(\tau u^+) \big) +
\frac{\mu_1-\mu_2}2 \big| \tau u^++ \msj_{\mu_2}(\tau u^+)
\big|_2^2  \\
&= J_{\mu_2}(\tau u^+) + \frac{\mu_1-\mu_2}2
\big| \tau u^+ + \msj_{\mu_2}(\tau u^+)
\big|_2^2  \\
&\geq \ga_{\mu_2} + \frac{\mu_1-\mu_2}2
\big| \tau u^+ + \msj_{\mu_2}(\tau u^+)
\big|_2^2 \,,
\endaligned
\]
which ends the proof.
\end{proof}

\subsection{Some technical results}

We remark that, by $(V_1)$,
$V_\vr(x)\to V(0)$ uniformly on bounded sets of
$\R^3$ as $\vr\to0$. We will make use of this property and
the results just proved in Subsection \ref{TLE}
to prove some technical results that seem to be
useful in the sequel.

Denote $V_0=V(0)$, set
$V^0(x)=V(x)-V(0)$ and $V_\vr^0(x)=V^0(\vr x)$, we
soon have
\begin{\equ}\label{h-vr identity}
\widetilde\Phi_\vr(u)=\mst_{\mbox{\tiny $V_0$}}(u)+\frac12\int V_\vr^0(x)|u|^2
-\int \big( F(\vr x,|u|)-G(|u|) \big) \,.
\end{\equ}

\begin{Lem}\label{h-vr to h-0}
Let $(f_1)$-$(f_5)$
be satisfied, for each $u\in E^+$,
we have $h_\vr(u)\to \msj_{\mbox{\tiny $V_0$}}(u)$ as $\vr\to0$.
\end{Lem}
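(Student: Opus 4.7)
The plan is to (1) show $\{h_\vr(u)\}_{\vr>0}$ is bounded in $E^-$, (2) extract a weak limit $v_0$ along a subsequence, (3) identify $v_0=\msj_{V_0}(u)$ by passing to the limit in the Euler equation, and (4) upgrade weak to strong convergence using the uniform coercivity of the fiber problem. Uniqueness of the weak limit then gives convergence of the full $\vr$-family.

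\textbf{Boundedness and weak limit.} From \eqref{identity5} together with $F(\vr x,s)\le G(s)$ (which is part of $(f_2)$), I obtain the $\vr$-independent bound
\[
\|h_\vr(u)\|^2 \;\le\; \frac{2|V|_\infty}{a-|V|_\infty}\|u\|^2 + \frac{2a}{a-|V|_\infty}\int G(|u|).
\]
Hence, along some sequence $\vr_n\to 0$, $h_{\vr_n}(u)\rightharpoonup v_0$ weakly in $E^-$ with $h_{\vr_n}(u)\to v_0$ in $L^q_{loc}(\R^3,\C^4)$ for every $q\in[1,3)$.

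\textbf{Identifying the limit.} I would test $\widetilde\Phi_{\vr_n}'(u+h_{\vr_n}(u))z=0$ against arbitrary $z\in E^-\cap C_c^\infty(\R^3,\C^4)$, which is dense in $E^-$ by Proposition \ref{lpdec}, and pass to the limit. The assumption $V(0)=\min_\Lam V<\min_{\pa\Lam}V$ forces $0\in\opint\Lam$, so there is $\rho>0$ with $B_\rho\subset\Lam$; then $\vr_n\cdot\supp z\subset\Lam$ for $n$ large, and the definition \eqref{mod} of $f$ reduces to $f(\vr_n x,s)\equiv g(s)$ on $\supp z$. At the same time, $V_{\vr_n}\to V_0$ uniformly on $\supp z$ by continuity of $V$ at $0$. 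Combined with strong $L^q_{loc}$-convergence of $u+h_{\vr_n}(u)$ and the growth conditions from $(g_2)$ or $(g_2')$, one passes to the limit to obtain $\mst_{V_0}'(u+v_0)z=0$; by density this extends to all $z\in E^-$, forcing $v_0=\msj_{V_0}(u)$.

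\textbf{Strong convergence and the main obstacle.} Subtracting the Euler equations of $h_{\vr_n}(u)$ and $\msj_{V_0}(u)$, testing against $z_n:=h_{\vr_n}(u)-\msj_{V_0}(u)\in E^-$ and isolating the quadratic piece $\Re\int V_{\vr_n}|z_n|^2$, which is absorbed into the LHS via $(V_1)$, yields
\[
\frac{a-|V|_\infty}{a}\,\|z_n\|^2 \;\le\; \Re\int(V_{\vr_n}-V_0)(u+\msj_{V_0}(u))\bar z_n - \Re\int\bigl[f(\vr_n x,|u+h_{\vr_n}(u)|)(u+h_{\vr_n}(u))-g(|u+\msj_{V_0}(u)|)(u+\msj_{V_0}(u))\bigr]\bar z_n.
\]
The potential cross term tends to zero since $(V_{\vr_n}-V_0)(u+\msj_{V_0}(u))\to 0$ in $L^2$ by dominated convergence, paired with $\{z_n\}$ bounded in $L^2$. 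The nonlinear integral splits into a ``penalization'' piece $f-g$, supported in $|x|\ge\rho/\vr_n$, and a ``true'' difference that tends to zero almost everywhere and is dominated pointwise by a fixed integrable function. Both are handled by splitting into a compact core (via strong $L^q_{loc}$-convergence) and a tail (via the $L^2$-decay of the fixed $u$ and the exponential decay of $\msj_{V_0}(u)$ from Lemma \ref{ding2008} or Lemma \ref{asym}). The principal obstacle is precisely this last tail estimate: since $\R^3$ is noncompact and only weak $L^2$-convergence of $z_n$ is a priori available, the tails must be controlled indirectly, exploiting the decay of the fixed limiting profile $u+\msj_{V_0}(u)$ and the locality of the penalization outside $\Lam$.
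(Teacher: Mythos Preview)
Your overall strategy is reasonable, but Step~3 has a genuine gap in the nonlinear term. With your splitting, the penalization piece $[f(\vr_n x,\cdot)-g(\cdot)]$ is evaluated at $u+h_{\vr_n}(u)$, not at the fixed profile; it lives on $\R^3\setminus\Lam_{\vr_n}$ and there you only know $h_{\vr_n}(u)$ is bounded in $E$, with no smallness at infinity. The proposed core/tail argument does not close: on the tail you are left with terms like $\int_{\R^3\setminus\Lam_{\vr_n}}g(|z_\vr|)|z_\vr||v_\vr|$ with $z_\vr=u+h_{\vr_n}(u)$, which in the asymptotically linear case contributes $b\,|v_\vr|_2^2$---too large to absorb into $\tfrac{a-|V|_\infty}{a}\|v_\vr\|^2$---and in the super-linear case is worse. (Separately, Lemmas~\ref{ding2008} and~\ref{asym} concern \emph{least energy solutions}, not $\msj_{V_0}(u)$ for an arbitrary $u\in E^+$; all that is actually available, and all that is needed, is that $w:=u+\msj_{V_0}(u)\in E\hookrightarrow L^q$, hence $\int_{|x|>R}|w|^q\to0$.) The repair is to split the other way: write
\[
f(\vr_n x,|z_\vr|)z_\vr-g(|w|)w=\bigl[f(\vr_n x,|z_\vr|)z_\vr-f(\vr_n x,|w|)w\bigr]+\bigl[f(\vr_n x,|w|)-g(|w|)\bigr]w.
\]
The first bracket, tested against $v_\vr=z_\vr-w$, is $\ge0$ by the monotonicity of $\Psi_\vr'$ (this is $(f_5)$, which you listed but never used) and is therefore dropped with a sign; the second bracket involves only the fixed $w$, is supported in $\R^3\setminus\Lam_{\vr}$, and is $o(\|v_\vr\|)$ by the $L^q$-decay of $w$. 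With this correction no weak-limit identification is needed at all.

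The paper reaches the same endpoint by a more structural route that bypasses your Steps~1--2 entirely. Instead of subtracting Euler equations, it adds the two second-order Taylor identities \eqref{identity1}--\eqref{identity2} (encoding the maximality of $h_\vr(u)$ and $\msj_{V_0}(u)$ on their fibers). After the algebra in \eqref{A4}, all Hessian terms $\Psi_\vr''\ge0$ fall on the left and the right-hand side of \eqref{A5} involves \emph{only} $w$: a cross term $\int V_\vr^0\,w\cdot\bar v_\vr$ and a penalization term $\int_{\R^3\setminus\Lam_\vr}[g(|w|)-\tilde g(|w|)]\,w\cdot\bar v_\vr$. Both are $o(\|v_\vr\|)$ directly, giving $\|v_\vr\|\to0$ for the full family without any subsequence argument.
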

\begin{proof}
By \eqref{h-vr identity}, we deduce that
\begin{\equ}\label{A4}
\aligned
&\big( \widetilde\Phi_\vr(z_\vr)-\widetilde\Phi_\vr(w) \big)
+\big( \mst_{\mbox{\tiny $V_0$}}(w)-\mst_{\mbox{\tiny $V_0$}}(z_\vr) \big) \\
=&\,\frac12 \int V_\vr^0(x)\big( |z_\vr|^2-|w|^2 \big)
+ \int \big( G(|z_\vr|)-G(|w|) \big)\\
&\, -\int \big( F(\vr x,|z_\vr|)-F(\vr x,|w|) \big)
\endaligned
\end{\equ}
where $z_\vr=u+h_\vr(u)$, $w=u+\msj_{\mbox{\tiny $V_0$}}(u)$.
Denoted by $v_\vr=z_\vr-w$, we find
\[
\int V_\vr^0(x)\big( |z_\vr|^2-|w|^2 \big)
=\int V_\vr^0(x)|v_\vr|^2+2\Re\int V_\vr^0(x)w\cdot \ov{ v_\vr}
\]
and
\[
\aligned
&\int \big( G(|z_\vr|)-G(|w|) \big)
-\int \big( F(\vr x,|z_\vr|)-F(\vr x,|w|) \big)  \\
=&\, \Re\int g(|w|)w\cdot \ov{v_\vr} -
\Re\int f(\vr x,|w|)w\cdot \ov{v_\vr} \\
&\, +\int_0^1(1-s)\Psi''(w+s v_\vr)[v_\vr,v_\vr]\, ds \\
&\, -\int_0^1(1-s)\Psi_\vr''(w+s v_\vr)[v_\vr,v_\vr]\, ds  \,.
\endaligned
\]
Remark that, similar to \eqref{identity1} and
\eqref{identity2}, we infer
\[
\aligned
 &\int_0^1(1-s)\Psi_\vr''(z_\vr-s v_\vr)[v_\vr,v_\vr] \,ds \\
 &+\frac{1}{2}\|v_\vr\|^2+\frac12 \int V_\vr(x)|v_\vr|^2
 =\widetilde\Phi_\vr(z_\vr)-\widetilde\Phi_\vr(w)
\endaligned
\]
and
\[
\aligned
 &\int_0^1(1-s)\Psi''(w+sv_\vr)[v_\vr,v_\vr] \,ds \\
 &+\frac{1}{2}\|v_\vr\|^2+\frac{V_0}2 |v_\vr|_2^2
 =\mst_{\mbox{\tiny $V_0$}}(w)-\mst_{\mbox{\tiny $V_0$}}(z_\vr) \,.
\endaligned
\]
Then we get from \eqref{A4} (jointly with the definition of $f$
and \eqref{N2})
\begin{\equ}\label{A5}
\aligned
\|v_\vr\|^2+V_0|v_\vr|_2^2
\leq&\,\Re\int V_\vr^0(x)w\cdot \ov{v_\vr}
 + \Re\int  g(|w|)w\cdot \ov{v_\vr}  \\
 &\,- \Re\int f(\vr x,|w|)w\cdot \ov{ v_\vr } \\
\leq&\,\int |V_\vr^0(x)|\cdot |w|\cdot |v_\vr|
 + c_1\int_{\R^3\setminus\Lam_\vr} |w| \cdot |v_\vr|  \\
&\, + c_1\int_{\R^3\setminus\Lam_\vr} |w|^{p-1} \cdot |v_\vr| \\
&\, + \frac{a-|V|_\infty}2
\int_{\R^3\setminus\Lam_\vr} |w|\cdot |v_\vr|  \\
\leq&\, \Big(
\int |V_\vr^0(x)|^2 |w|^2 \Big)^{1/2} |v_\vr|_2 \\
&\,+ c_2\Big(
\int_{\R^3\setminus\Lam_\vr} |w|^2 \Big)^{1/2}|v_\vr|_2 \\
&\, + c_1\Big(
\int_{\R^3\setminus\Lam_\vr}|w|^p \Big)^{(p-1)/p} |v_\vr|_p \,.
\endaligned
\end{\equ}
Since $V_\vr^0(x)\to0$ uniformly on bounded sets of $\R^3$
as $\vr\to0$, we easily have
\[
\int |V_\vr^0(x)|^2 |w|^2 = o(1) \,.
\]
Moreover, by noting that
$w$ decays at infinity in the sense for $q=2,p$,
\[
\limsup_{R\to\infty}\int_{|x|\geq R}|w|^q=0 \,.
\]
We find (due to $0\in \Lam$)
\[
\int_{\R^3\setminus\Lam_\vr} |w|^2=o(1) \,,
\]
\[
\int_{\R^3\setminus\Lam_\vr}|w|^p=o(1) \,,
\]
as $\vr\to0$. Thus \eqref{A5} leads to
$\|v_\vr\|=\|h_\vr(u)-\msj_{\mbox{\tiny $V_0$}}(u)\|=o(1)$ as $\vr\to0$.
So we have the lemma proved.
\end{proof}

\begin{Lem}\label{MPG}
Assume that $(f_1)$-$(f_6)$ are satisfied,
for $\vr>0$ small enough, $I_\vr$ possesses the mountain-pass structure:
\begin{itemize}
\item[$(1)$] $I_\vr(0)=0$ and there exist $r>0$ and $\tau>0$
(both independent of $\vr$) such that
$I_\vr|_{S_r^+}\geq\tau$.
\item[$(2)$] there exists $u_0\in E^+$ (independent of $\vr$)
such that $\|u_0\|>r$ and $I_\vr(u_0)<0$.
\end{itemize}
\end{Lem}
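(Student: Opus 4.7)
For part (1), I first observe that $h_\vr(0)=0$: any $v\in E^-$ satisfies
$\widetilde\Phi_\vr(v)=-\frac12\|v\|^2+\frac12\int V_\vr|v|^2-\Psi_\vr(v)\leq -\frac{a-|V|_\infty}{2a}\|v\|^2\leq 0$
by $(V_1)$, \eqref{l2ineq} and the non-negativity $F\geq 0$ implied by $(f_3)$ and $(g_1)$. Hence the maximum of $\widetilde\Phi_\vr$ over $E^-$ is attained at $v=0$, whence $I_\vr(0)=\widetilde\Phi_\vr(0)=0$. For any $u\in E^+$ the maximizing property of $h_\vr$ gives $I_\vr(u)=\widetilde\Phi_\vr(u+h_\vr(u))\geq\widetilde\Phi_\vr(u+0)=\widetilde\Phi_\vr(u)$, and Lemma \ref{link1} immediately yields $I_\vr|_{S_r^+}\geq\tau$ with $r,\tau$ independent of $\vr$.

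For part (2), I exploit the limit equation with $\mu=V_0:=V(0)=\min_\Lam V\in(-a,a)$. Fix a least-energy solution $e\in\msr_{V_0}$; then $e^+\neq 0$ (otherwise $e\in E^-$ would force $\mst_{V_0}(e)\leq 0<\ga_{V_0}$). In the super-linear case, $J_{V_0}(te^+)=\mst_{V_0}\bigl(te^++\msj_{V_0}(te^+)\bigr)\to-\infty$ as $t\to\infty$ by Lemma \ref{limit equ link}(1), since $te^++\msj_{V_0}(te^+)\in E_{e^+}$ has norm bounded below by $t\|e^+\|\to\infty$. In the asymptotically linear case, the dichotomy $J_{V_0}(te^+)\to\pm\infty$ from Lemma \ref{ding book lemma 7.7}(2) together with uniqueness of the positive critical point of $t\mapsto J_{V_0}(te^+)$ (established in the proof of Lemma \ref{asym}) again forces the limit to be $-\infty$. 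Choose $T$ so large that $u_0:=Te^+$ satisfies $\|u_0\|>r$ and $J_{V_0}(u_0)<-1$; this $u_0$ is $\vr$-independent.

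It then remains to show $I_\vr(u_0)\to J_{V_0}(u_0)$ as $\vr\to 0$, which yields $I_\vr(u_0)<0$ for all small $\vr$. Set $w_\vr=u_0+h_\vr(u_0)$ and $w_0=u_0+\msj_{V_0}(u_0)$; Lemma \ref{h-vr to h-0} gives $w_\vr\to w_0$ in $E$, hence in $L^q$ for $q\in[2,3]$. Using \eqref{h-vr identity}, I decompose
\[
I_\vr(u_0)-J_{V_0}(u_0)=\bigl(\widetilde\Phi_\vr(w_\vr)-\widetilde\Phi_\vr(w_0)\bigr)+\frac12\int V_\vr^0(x)|w_0|^2-\int\bigl(F(\vr x,|w_0|)-G(|w_0|)\bigr).
\]
The first bracket vanishes by continuity of $\widetilde\Phi_\vr$ and the $\vr$-uniform growth bounds on $f$. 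The second term vanishes by dominated convergence, since $V_\vr^0\to 0$ uniformly on bounded sets while remaining uniformly bounded and $w_0\in L^2$. For the third, $F(\vr x,s)-G(s)=(1-\chi_\Lam(\vr x))(\tilde G(s)-G(s))$ vanishes whenever $\vr x\in\Lam$; since $0\in\Lam$ is interior, for each $R>0$ the support of this integrand is contained in $\{|x|>R\}$ for all sufficiently small $\vr$, and the bound $|\tilde G(s)-G(s)|\leq C(s^2+s^p)$ combined with the integrability of $w_0$ at infinity closes the argument. Making this convergence precise is the main technical obstacle; once obtained, both parts of the lemma are immediate.
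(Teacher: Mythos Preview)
Your proof is correct and follows essentially the same strategy as the paper: part~(1) via $I_\vr\geq\widetilde\Phi_\vr$ on $E^+$ and Lemma~\ref{link1}, part~(2) by taking $u_0=t_0 w^+$ with $w\in\msr_{V_0}$ and $J_{V_0}(t_0w^+)<-1$, then transferring to $I_\vr$ using Lemma~\ref{h-vr to h-0}. The only difference is in how the transfer is carried out: the paper bounds $I_\vr(t_0w^+)$ from above by discarding the nonlinear term outside $\Lam_\vr$ and truncating the remaining $G$-integral to a fixed ball $B_{R_0}$, obtaining directly $I_\vr(t_0w^+)\leq -\tfrac12+o(1)$; you instead prove the full convergence $I_\vr(u_0)\to J_{V_0}(u_0)$ via the decomposition based on \eqref{h-vr identity}. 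Both routes work, and yours in fact anticipates the stronger statement \eqref{Ascoli} used later in Lemma~\ref{c-vr leq}.
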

\begin{proof}
Since we have $I_\vr(u)\geq\widetilde\Phi_\vr(u)$ for all $u\in E^+$,
$(1)$ follows easily from Lemma \ref{link1}.

To check $(2)$, let $w=w^++w^-\in \msr_{\mbox{\tiny $V_0$}}$ be the least energy
solution to
\[
-i\al\cdot\nabla u + a\bt u + V_0u = g(|u|)u
\]
with $|w(0)|=\max_{\R^3}|w(x)|$.
Following Lemma \ref{limit equ link} $(2)$ and Lemma \ref{asym}
$(2)$, we have
\[
\ga_{\mbox{\tiny $V_0$}}=\inf_{\ga\in\Ga_0}\max_{t\in[0,1]}J_{\mbox{\tiny $V_0$}}(\ga(t))
=\inf_{e\in E^+\setminus\{0\}}\max_{t\geq0}J_{\mbox{\tiny $V_0$}}(tu)  \,,
\]
where $\Ga_0:=\big\{\ga\in C([0,1],E^+):\,
\ga(0)=0,\ J_{\mbox{\tiny $V_0$}}(\ga(1))<0 \big\}$.
From Lemma \ref{limit equ link} $(1)$ and
Lemma \ref{ding book lemma 7.7} $(2)$,
we see that there exists $t_0>0$
(large enough) such that
\[
\aligned
J_{\mbox{\tiny $V_0$}}(t_0 w^+)
=&\,\frac12\big( \|t_0 w^+\|^2-\|\msj_{\mbox{\tiny $V_0$}}(t_0 w^+)\|^2
\big) + \frac{V_0}2\int|t_0 w^++\msj_{\mbox{\tiny $V_0$}}(t_0 w^+)|^2 \\
&\,-\int G \big( |t_0 w^++\msj_{\mbox{\tiny $V_0$}}(t_0 w^+)| \big)
<-1 \,.
\endaligned
\]
Hence, there is $R_0>0$ such that
\begin{\equ}\label{X3}
\aligned
&\frac12\big( \|t_0 w^+\|^2-\|\msj_{\mbox{\tiny $V_0$}}(t_0 w^+)\|^2
\big) + \frac{V_0}2\int|t_0 w^++\msj_{\mbox{\tiny $V_0$}}(t_0 w^+)|^2 \\
&-\int_{B_{R_0}} G \big( |t_0 w^++\msj_{\mbox{\tiny $V_0$}}(t_0 w^+)| \big)
\leq -\frac12 \,.
\endaligned
\end{\equ}
Recall that $V_\vr(x)\to V_0$ uniformly on bounded
sets of $\R^3$, it follows from Lemma \ref{h-vr to h-0}
and \eqref{X3} that
\[
\aligned
I_\vr(t_0 w^+)=&\,\frac12\big( \|t_0 w^+\|^2-\|h_\vr(t_0 w^+)\|^2
\big) + \frac12\int V_\vr(x)|t_0 w^++h_\vr(t_0 w^+)|^2 \\
&-\int F\big( \vr x, |t_0 w^++h_\vr(t_0 w^+)| \big)  \\
\leq&\,\frac12\big( \|t_0 w^+\|^2-\|h_\vr(t_0 w^+)\|^2
\big) + \frac12\int V_\vr(x)|t_0 w^++h_\vr(t_0 w^+)|^2 \\
&-\int_{\Lam_\vr} G\big( |t_0 w^++h_\vr(t_0 w^+)| \big) \\
\leq&\,\frac12\big( \|t_0 w^+\|^2-\|\msj_{\mbox{\tiny $V_0$}}(t_0 w^+)\|^2
\big) + \frac{V_0}2\int |t_0 w^++\msj_{\mbox{\tiny $V_0$}}(t_0 w^+)|^2 \\
&-\int_{B_{R_0}} G \big( |t_0 w^++\msj_{\mbox{\tiny $V_0$}}(t_0 w^+)|
\big) +o(1)   \\
\leq&\, -\frac12 + o(1) \qquad \text{as } \vr\to0 \,.
\endaligned
\]
Therefore, there is $\vr_0>0$ such that
$I_\vr(t_0 w^+)<0$ for all $\vr\in(0,\vr_0]$, ends the proof.
\end{proof}

\begin{Lem}\label{I-vr Cc}
Assuming $(f_1)$-$(f_6)$, for each $\vr>0$,
$I_\vr$ satisfies the $(C)_c$-condition.
\end{Lem}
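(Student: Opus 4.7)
My plan is to reduce the $(C)_c$-condition for $I_\vr$ on $E^+$ to Lemma~\ref{C-c condition} for $\widetilde\Phi_\vr$ on all of $E$. Given a Cerami sequence $\{u_n\}\subset E^+$ for $I_\vr$ at level $c$, I would set $w_n:=u_n+h_\vr(u_n)$ and verify that $\{w_n\}$ is a Cerami sequence for $\widetilde\Phi_\vr$ at the same level. The bookkeeping uses the identity $\pi(u,h_\vr(u))=0$ from \eqref{N5}, which forces $\widetilde\Phi_\vr'(w_n)|_{E^-}=0$, together with the chain rule and the fact that $h_\vr'(u)\xi\in E^-$ for every $\xi\in E^+$. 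These combine to give $\widetilde\Phi_\vr'(w_n)z=I_\vr'(u_n)z^+$ for every $z\in E$, so that $\widetilde\Phi_\vr(w_n)=I_\vr(u_n)\to c$ and $\|\widetilde\Phi_\vr'(w_n)\|_{E^*}\leq \|I_\vr'(u_n)\|_{(E^+)^*}\to 0$; in particular, since $u_n=w_n^+$, we have $\widetilde\Phi_\vr'(w_n)w_n = I_\vr'(u_n)u_n=o(1)$ thanks to $\|u_n\|\,\|I_\vr'(u_n)\|\to 0$.

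The hard part will be boundedness of $\{w_n\}$. The reduction estimate \eqref{identity5} combined with the growth bound \eqref{N2} only yields the superlinear a priori bound $\|h_\vr(u)\|\leq C(1+\|u\|^{p/2})$, so one cannot directly conclude $(1+\|w_n\|)\|\widetilde\Phi_\vr'(w_n)\|_{E^*}\to 0$ and apply Lemma~\ref{C-c condition} as a black box. I would therefore replay the boundedness portion of that proof directly with $w_n$ in place of $u_n$; that argument uses only the two facts already in hand, namely that $\int\widehat F(\vr x,|w_n|)$ is bounded (extracted from $\widetilde\Phi_\vr(w_n)-\tfrac12\widetilde\Phi_\vr'(w_n)w_n$) and that $\widetilde\Phi_\vr'(w_n)(w_n^+-w_n^-)=I_\vr'(u_n)u_n=o(1)$. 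In the super-linear case the H\"older estimate with exponent $\sa_0=p/(p-1)$ combined with \eqref{g-sigma0 estimate} yields $\|w_n\|^2\leq C\|w_n\|+o(1)$; in the asymptotically linear case the corresponding bound on $\|h_\vr(u)\|$ is already linear and one either cites Lemma~\ref{C-c condition} directly or reproduces its Case~2 with $v_n=w_n/\|w_n\|$ under the contradiction hypothesis $\|w_n\|\to\infty$, invoking $(f_6)$ in the same way.

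Once $\{w_n\}$ is known to be bounded, we have $(1+\|w_n\|)\|\widetilde\Phi_\vr'(w_n)\|_{E^*}\leq C\|I_\vr'(u_n)\|_{(E^+)^*}\to 0$, so $\{w_n\}$ truly is a Cerami sequence for $\widetilde\Phi_\vr$ at level $c$, and Lemma~\ref{C-c condition} furnishes a subsequence $w_n\to w$ in $E$. Projecting by the continuous $P^+$ of Proposition~\ref{lpdec} then gives $u_n=P^+w_n\to P^+w$ in $E^+$, completing the proof.
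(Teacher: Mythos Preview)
Your proposal is correct and follows essentially the same route as the paper: both reduce the Cerami condition for $I_\vr$ to that for $\widetilde\Phi_\vr$ via the identity $\widetilde\Phi_\vr'(u+h_\vr(u))|_{E^-}=0$, then replay the boundedness argument from the proof of Lemma~\ref{C-c condition}. You are simply more explicit than the paper---which condenses everything to ``we check easily (from the proof of Lemma~\ref{C-c condition})'' after recording the identities $I_\vr'(u)u=\widetilde\Phi_\vr'(u+h_\vr(u))(u\pm h_\vr(u))$---about why one must replay rather than cite that lemma directly; one minor remark is that the continuity of $P^+$ on $E$ follows already from the orthogonality of the splitting $E=E^+\oplus E^-$, so Proposition~\ref{lpdec} (which concerns $L^q$-boundedness) is not needed in the last step.
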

\begin{proof}
Firstly, it follows from the definition of $h_\vr$ that
\begin{\equ}\label{identity3}
\widetilde\Phi_\vr'\big( u+h_\vr(u) \big)z=0
\quad \text{for all } u\in E^+ \text{ and } z\in E^- \,.
\end{\equ}
Hence a direct calculation shows
\begin{\equ}\label{identity4}
\aligned
I_\vr'(u)u&=\widetilde\Phi_\vr'\big( u+h_\vr(u) \big)
(u+h_\vr'(u)u)  \\
&=\widetilde\Phi_\vr'\big( u+h_\vr(u) \big)
(u+h_\vr(u))  \\
&=\widetilde\Phi_\vr'\big( u+h_\vr(u) \big)
(u-h_\vr(u))  \,.
\endaligned
\end{\equ}

Now let $\{w_n\}\subset E^+$ be a $(C)_c$ sequence for $I_\vr$
and set $u_n:=w_n+h_\vr(w_n)$, we check easily (from the proof
of Lemma \ref{C-c condition}) that $\{u_n\}$ possesses a convergent
subsequence. Therefore, we have $I_\vr$ satisfies the $(C)_c$ condition.
\end{proof}

Define
\[
c_\vr:=\inf_{\nu\in\Ga_\vr}\max_{t\in[0,1]}I_\vr(\nu(t)) \,,
\]
where $\Ga_\vr:=\big\{\nu\in C([0,1],E^+):\, \nu(0)=0, \
I_\vr(\nu(1))<0 \big\}$.
Then $\tau\leq c_\vr<\infty$
is a well-defined critical value for $I_\vr$
(also for $\widetilde\Phi_\vr$).

\begin{Lem}\label{c-vr=d-vr}
$c_\vr=\inf_{u\in E^+\setminus\{0\}}\max_{t\geq0}I_\vr(tu)$.
\end{Lem}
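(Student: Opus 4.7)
The plan is to prove the two-sided inequality separately; write $d_\vr := \inf_{u \in E^+ \setminus \{0\}} \max_{t \geq 0} I_\vr(tu)$ throughout.

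For the easy direction $c_\vr \leq d_\vr$, I would fix $u \in E^+ \setminus \{0\}$ with $\max_{t \geq 0} I_\vr(tu) < \infty$ and first check that $I_\vr(tu) < 0$ for some $t > 0$: in the super-linear case $(g_2)$ this follows from the growth of $G$ together with the boundedness of $h_\vr$ furnished by \eqref{identity5}, while in the asymptotically linear case $(g_2')$ it follows from a dichotomy argument modelled on Lemma \ref{ding book lemma 7.7}(2) (the alternative $I_\vr(tu) \to +\infty$ would contradict finiteness of the maximum). Picking $T > 0$ with $I_\vr(Tu) < 0$, the linear path $\nu(s) := sTu$ belongs to $\Ga_\vr$ and gives
\[
c_\vr \leq \max_{s \in [0,1]} I_\vr(sTu) \leq \max_{t \geq 0} I_\vr(tu),
\]
so taking the infimum over $u$ yields $c_\vr \leq d_\vr$.

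For the reverse $d_\vr \leq c_\vr$, the main tool is the claim that for every $u \in E^+ \setminus \{0\}$ the scalar function $t \mapsto I_\vr(tu)$ has at most one positive critical point, which is necessarily a strict global maximum. Combining \eqref{identity3} with the chain rule gives $\frac{d}{dt} I_\vr(tu) = \widetilde\Phi_\vr'(tu + h_\vr(tu)) u$; differentiating once more and grouping terms in the style of the proof of Lemma \ref{asym}, the sign at a critical point is controlled by a quadratic form in $u$ and $h_\vr'(tu)u$ whose positivity comes from $(f_5)$ combined with $(g_1)$ applied on $\Lam_\vr$, where $f = g$ provides the strict monotonicity $g'(s) > 0$. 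With this in hand I would introduce the Nehari-type set $\msn_\vr := \{u \in E^+ \setminus \{0\} : I_\vr'(u)u = 0\}$; the uniqueness claim then forces $d_\vr = \inf_{\msn_\vr} I_\vr$.

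To finish I would show every $\nu \in \Ga_\vr$ meets $\msn_\vr$. The continuous map $s \mapsto I_\vr'(\nu(s)) \nu(s)$ vanishes at $s = 0$, is strictly positive for small $s > 0$ (Lemma \ref{link1} gives $I_\vr \geq \tau$ on $S_r^+$, and the expansion $I_\vr'(u)u = \|u\|^2 + \int V_\vr |u|^2 + o(\|u\|^2)$ for small $u \in E^+$ together with \eqref{l2ineq} makes the quantity positive), and is strictly negative at $s = 1$ (as $I_\vr(\nu(1)) < 0$ while the ray $t \mapsto I_\vr(t\nu(1))$ starts at $0$ and is positive for small $t$, forcing the unique maximum on that ray to lie in $(0,1)$ and the derivative at $t=1$ to be negative). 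The intermediate value theorem produces $s^* \in (0,1)$ with $\nu(s^*) \in \msn_\vr$, whence
\[
\inf_{\msn_\vr} I_\vr \leq I_\vr(\nu(s^*)) \leq \max_{s \in [0,1]} I_\vr(\nu(s)),
\]
and taking the infimum over $\nu$ yields $d_\vr \leq c_\vr$. The main obstacle will be the rigorous verification of the single-max-on-ray claim: the modification \eqref{mod} makes $s \mapsto \tilde g(s) s$ only weakly monotone for large $s$ outside $\Lam$, so the strict positivity of the governing quadratic form must be extracted from the contribution on $\Lam_\vr$, where $f = g$ satisfies the strict monotonicity of $(g_1)$, rather than read off uniformly from $(f_5)$.
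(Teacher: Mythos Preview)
Your overall architecture matches the paper's: set $d_\vr$, get $c_\vr\leq d_\vr$ from ray-paths, then prove $d_\vr\leq c_\vr$ by showing each $\nu\in\Ga_\vr$ meets the Nehari-type set $\msn$, which in turn rests on the single-critical-point property for $t\mapsto I_\vr(tu)$. The paper also reduces the single-max claim to the positivity of the quadratic form
\[
\big(\Psi_\vr''(u)[u,u]-\Psi_\vr'(u)u\big)+2\big(\Psi_\vr''(u)[u,v]-\Psi_\vr'(u)v\big)+\Psi_\vr''(u)[v,v]
=\int f(\vr x,|u|)|v|^2+\int f_s(\vr x,|u|)|u|\Big(|u|+\frac{\Re u\cdot\bar v}{|u|}\Big)^2,
\]
and then invokes \cite{Ackermann} exactly as you intend.

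Where you diverge is in the treatment of strict positivity, and here your ``main obstacle'' is a red herring. You write that the modification makes $s\mapsto\tilde g(s)s$ only weakly monotone for large $s$ and propose to extract the strict sign from the contribution on $\Lam_\vr$. In fact $\tilde g'(s)>0$ for every $s>0$: for $s<\xi$ one has $\tilde g=g$ so this is $(g_1)$; for $s>\xi$ the defining relation gives $\tilde g'(s)s=\tfrac{a-|V|_\infty}{2}-\tilde g(s)$, and since $g'(\xi)\xi>0$ forces $g(\xi)<\tfrac{a-|V|_\infty}{2}$, integration yields $\tilde g(s)<\tfrac{a-|V|_\infty}{2}$ for all $s>\xi$. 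Hence $f_s(x,s)s>0$ \emph{everywhere}, which is what the paper records (citing $(g_1)$, $(g_3)$, and the definition of $f$). Your plan to localize to $\Lam_\vr$ is not just inefficient but risky: the Ackermann criterion requires the quadratic form above to be strictly positive for \emph{every} $u\in E\setminus\{0\}$ and $v\in E$, and if you only had $f_s\geq0$ off $\Lam_\vr$ then taking $v=0$ and $u$ supported in $\R^3\setminus\Lam_\vr$ would kill the form.

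One minor difference in the crossing step: instead of your IVT argument, the paper argues by contradiction using the identity
\[
I_\vr(\nu(t))=\tfrac12 I_\vr'(\nu(t))\nu(t)+\int\widehat F\big(\vr x,|\nu(t)+h_\vr(\nu(t))|\big),
\]
so that $I_\vr'(\nu(t))\nu(t)>0$ on $(0,1]$ would force $I_\vr(\nu(1))>0$. Your route via $I_\vr'(\nu(1))\nu(1)<0$ also works once the single-max property is in hand, but the identity is cleaner and avoids the extra ray analysis at $\nu(1)$.
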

\begin{proof}
Indeed,
set $d_\vr=\inf_{u\in E^+\setminus\{0\}}\max_{t\geq0}I_\vr(tu)$,
we have $d_\vr\geq c_\vr$
by virtue of $(f_6)$ and the proof of Lemma
\ref{ding book lemma 7.7} $(2)$. To prove the other inequality we first note
that if $s>0$,  by virtue of
$(g_1)$, $(g_3)$ and the definition of $f$,
$f_s(x,s)s>0$. Hence, if $u\in E\setminus\{0\}$
and $v\in E$, we have
\[
\aligned
&\big( \Psi_\vr''(u)[u,u]-\Psi_\vr'(u)u \big)
+2\big( \Psi_\vr''(u)[u,v]-\Psi_\vr'(u)v \big)
+\Psi_\vr''(u)[v,v] \\
=&\,\int f(\vr x, |u|)|v|^2 + \int  f_s
(\vr x, |u|)|u| \bigg( |u| + \frac{\Re u\cdot v}{|u|} \bigg)^2
>0 \,.
\endaligned
\]
As proved in \cite{Ackermann}, if
$z\in E^+\setminus\{0\}$ satisfies $I_\vr'(z)z=0$ then
$I_\vr''(z)[z,z]<0$. Therefore, let $u\in E^+\setminus\{0\}$,
we find the function $t \mapsto I_\vr(tu)$
has at most one nontrivial critical point $t=t(u)>0$.
So, denoted by
\[
\msn:=\big\{ t(u)u: \, u\in E^+\setminus\{0\}, \ t(u)<\infty \big\} \,,
\]
we have $\msn\neq\emptyset$ due to Lemma \ref{MPG}
and Lemma \ref{I-vr Cc} (In general, we remark that $\msn$ is not the Nehari manifold
since $\msn$ is not defined for all directions in $E^+$ for the
asymptotically linear case). Meanwhile, we notice
\[
d_\vr=\inf_{z\in \msn}I_\vr(z) \,.
\]
Thus we only need to show that given $\nu\in\Ga_\vr$ there exists
$\bar t\in[0,1]$ such that $\nu(\bar t)\in\msn$. Assuming contrarily we
have $\nu([0,1])\cap\msn=\emptyset$. In virtue of $(f_1)$ and
Lemma \ref{link1}
\[
I_\vr'(\nu(t))\nu(t)>0 \quad \text{for } t>0 \text{ small} \,.
\]
Since the function $t\mapsto I_\vr'(\nu(t))\nu(t)$ is continuous and
$I_\vr'(\nu(t))\nu(t)\neq0$ for all $t\in(0,1]$, we have
\[
I_\vr'(\nu(t))\nu(t)>0 \quad \text{for all } t\in(0,1] \,.
\]
Then, we find for all $t\in[0,1]$
\[
\aligned
I_\vr(\nu(t))&=\frac12 I_\vr'(\nu(t))\nu(t) + \int \widehat
F\big( \vr x, |\nu(t)+h_\vr(\nu(t))| \big)  \\
&\geq \frac12 I_\vr'(\nu(t))\nu(t) > 0 \,,
\endaligned
\]
and this contradicts the definition of $\Ga_\vr$.
Consequently, by noting that
$\nu(t)$ crosses $\msn$ provided $\nu\in\Ga_\vr$,
we have $d_\vr\leq c_\vr$.
\end{proof}

\begin{Lem}\label{c-vr leq}
$c_\vr\leq \ga_{\mbox{\tiny $V_0$}} +o(1)$ as $\vr\to0$.
\end{Lem}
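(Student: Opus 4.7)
The strategy is to construct an explicit competitor path for $c_\vr$ using a least energy solution of the limit equation at $\mu = V_0$. Let $w \in \msr_{V_0}$ be such a solution, which exists by Lemma~\ref{ding2008} in the super-linear case or by Lemma~\ref{asym} in the asymptotically linear case. Lemmas~\ref{limit equ link}(2) and \ref{asym}(2) furthermore give
\[
\gamma_{V_0} = \max_{t \geq 0} J_{V_0}(tw^+).
\]
The proof of Lemma~\ref{MPG}(2) already produces $t_0>0$ and $\vr_0>0$ such that $I_\vr(t_0 w^+)<0$ for all $\vr\in(0,\vr_0]$. Hence the path $\nu(s):=st_0 w^+$, $s\in[0,1]$, belongs to $\Gamma_\vr$, so
\[
c_\vr \leq \max_{t\in[0,t_0]} I_\vr(tw^+);
\]
the task reduces to upper bounding this maximum by $\gamma_{V_0}+o(1)$.

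Setting $u_{t,\vr}:=tw^++h_\vr(tw^+)$, the identity \eqref{h-vr identity} together with the equality $F(\vr x,\cdot)\equiv G$ on $\Lam_\vr$ yields
\[
I_\vr(tw^+) = \mst_{V_0}(u_{t,\vr}) + \tfrac12\int V_\vr^0(x)|u_{t,\vr}|^2 + \int_{\R^3\setminus\Lam_\vr}\bigl[G(|u_{t,\vr}|) - F(\vr x,|u_{t,\vr}|)\bigr].
\]
Since $\msj_{V_0}$ maximizes $\mst_{V_0}$ in the $E^-$-direction, $\mst_{V_0}(u_{t,\vr})\leq J_{V_0}(tw^+)\leq\gamma_{V_0}$. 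It therefore remains to prove that the two remaining terms vanish uniformly in $t\in[0,t_0]$ as $\vr\to 0$.

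To this end, pick $t_\vr\in[0,t_0]$ realizing the supremum, and along any sequence $\vr_k\to 0$ extract a subsequence with $t_k:=t_{\vr_k}\to\bar t$. The implicit function representation \eqref{N7} of $h_\vr$ together with the uniform inverse bound \eqref{N6} gives a Lipschitz estimate on $h_\vr$ over bounded subsets of $E^+$ that is uniform in $\vr$; combined with Lemma~\ref{h-vr to h-0} applied at the fixed argument $\bar t w^+$, this produces $h_{\vr_k}(t_k w^+)\to \msj_{V_0}(\bar t w^+)$ strongly in $E$. Consequently $u_{t_k,\vr_k}$ converges strongly in $E$, hence in $L^2\cap L^p$, and the sequence is tight in both spaces. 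The first remainder then vanishes by splitting at a large radius $R$, using $V_{\vr_k}^0\to 0$ uniformly on $\{|x|\leq R\}$ (by the local H\"older continuity from $(V_1)$) and $L^2$-tightness on $\{|x|>R\}$. For the second, the growth bounds from $(g_2)$ or $(g_2')$ (giving $G(s)\leq C(s^2+s^p)$) combined with tightness and with the inclusion $\R^3\setminus\Lam_{\vr_k}\subset\{|x|\geq c/\vr_k\}$ (which holds because $0\in\Lam$) finishes the estimate.

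The main technical obstacle is that Lemma~\ref{h-vr to h-0} only supplies pointwise convergence $h_\vr(u)\to\msj_{V_0}(u)$, while here the argument $t_k w^+$ varies with $k$; this is bridged by the uniform-in-$\vr$ equicontinuity of $h_\vr$ derived from \eqref{N6}--\eqref{N7}. Once this uniform version is in place, we deduce $I_{\vr_k}(t_k w^+)\leq J_{V_0}(t_k w^+)+o(1)\leq\gamma_{V_0}+o(1)$ along every subsequence, and the conclusion $c_\vr\leq\gamma_{V_0}+o(1)$ follows.
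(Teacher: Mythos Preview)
Your proof is correct and rests on exactly the same two ingredients as the paper's: the pointwise convergence $h_\vr(u)\to\msj_{V_0}(u)$ from Lemma~\ref{h-vr to h-0}, and the uniform-in-$\vr$ bound on $h_\vr'$ coming from \eqref{N6}--\eqref{N7}. The packaging differs slightly. The paper sets $H_\vr(t):=I_\vr(tw^+)-J_{V_0}(tw^+)$, observes that the derivative bound makes $\{H_\vr\}$ equicontinuous on $[0,t_0]$, and then invokes Arzel\`a--Ascoli (together with pointwise convergence to $0$) to conclude $H_\vr\to0$ uniformly; combined with Lemma~\ref{c-vr=d-vr} this gives $c_\vr\le\max_{t\ge0}I_\vr(tw^+)=\max_{t\ge0}J_{V_0}(tw^+)+o(1)=\ga_{V_0}+o(1)$. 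You instead decompose $I_\vr(tw^+)$ explicitly via \eqref{h-vr identity}, bound the main term $\mst_{V_0}(u_{t,\vr})$ by $\ga_{V_0}$ using the maximizing property of $\msj_{V_0}$, and then kill the two remainder terms by a direct subsequence-plus-tightness argument. Your route is a bit more hands-on but has the mild advantage of not invoking Lemma~\ref{c-vr=d-vr} (you feed the radial path $s\mapsto st_0w^+$ directly into the definition of $c_\vr$), and it produces only the one-sided estimate actually needed rather than full uniform convergence of $H_\vr$.
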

\begin{proof}
Again, let $w=w^++w^-\in\msr_{\mbox{\tiny $V_0$}}$, set $t_0>0$ such that
$J_{\mbox{\tiny $V_0$}}(t_0 w^+)\leq-1$. By virtue of
Lemma \ref{h-vr to h-0} and Lemma \ref{c-vr=d-vr}, it sufficient to prove
\begin{\equ}\label{Ascoli}
I_\vr(tw^+)=J_{\mbox{\tiny $V_0$}}(t w^+) + o(1)
\quad \text{uniformly in } t\in[0,t_0]
\end{\equ}
as $\vr\to0$.

To this end, we only need to show the family
$\{H_\vr\}\subset C([0,t_0])$
\begin{\equ}\label{H-vr}
H_\vr:[0,t_0]\to\R, \quad t\mapsto
I_\vr(tw^+)-J_{\mbox{\tiny $V_0$}}(t w^+)
\end{\equ}
is equicontinuous.
Observe that the boundedness of $h_\vr$ and $\widetilde\Phi_\vr''$
imply the boundedness of $h_\vr'$ due to \eqref{N6} and \eqref{N7},
we conclude the derivatives of the family defined in \eqref{H-vr}
are uniformly bounded. Then the proof ends with a trivial application of Arzel\`{a}-Ascoli theorem.
\end{proof}

\section{Proof of the main results}\label{PMT}

We are now presenting the proof of the main results on
the nonlinear Dirac equation:
\begin{\equ}\label{D3}
-i\al\cdot\nabla u + a\bt u + V_\vr(x) u = g(|u|)u \,.
\end{\equ}
Except for the hypotheses in Theorem \ref{main theorem} and
Theorem \ref{main theorem 2},
without loss of generality, we may assume that the boundary of
$\Lam$ is smooth, and that $0\in\Lam$ such that
$V(0)=V_0:=\min_\Lam V$.

Thanks to the preparatory results already proved in Section
\ref{auxiliary results}, we will give an unified prove cover
both super-linear and asymptotically linear cases.
As mentioned before, in order to localize the desired
solutions, we consider the modification of the function $g$
given by $f$ in \eqref{mod} and the associated equation
\begin{\equ}\label{PT1}
-i\al\cdot\nabla u + a\bt u + V_\vr(x) u = f(\vr x, |u|)u \,.
\end{\equ}
For ease of notations, let us denote
\[
\msa=\big\{x\in\Lam:\, V(x)=V_0 \big\} \,.
\]
And for the later use, letting $D=-i\al\cdot\nabla$, we rewrite
\eqref{PT1} as
\[
Du=-a\bt u - V_\vr(x)u + f(\vr x,|u|)u \,.
\]
Acting the operator $D$ on the two sides of the above
representation and noting that $D^2=-\De$, we find
\[
\De u = \big( a^2-V_\vr^2(x) \big) u - f^2(\vr x,|u|) u +
D\big( V_\vr(x)-f(\vr x,|u|) \big)u \,.
\]
Letting
\[
\sgn\, u =\left\{
\aligned
&\frac{\bar u}{|u|} \quad {\rm if\ } u\neq0, \\
&0 \quad {\rm if\ } u=0,
\endaligned \right.
\]
by the Kato's inequality \cite{Dautray}, there holds
\[
\De |u|\geq \Re[\De u \cdot (\sgn\, u)] \,.
\]
Observe that
\[
\Re \Big[ D\big( V_\vr(x)-f(\vr x,|u|) \big)u \cdot \frac{\bar u}{|u|}
\Big]=0 \,,
\]
hence
\begin{\equ}\label{PT4}
\De |u|\geq\big( a^2-V_\vr^2(x) \big) |u| - f^2(\vr x,|u|) |u| \,.
\end{\equ}
We remind that \eqref{PT4} together with the regularity results for $u$ (see Lemma \ref{critical points in W1,q})
imply there is $M>0$ (independent of $\vr$) satisfying
\[
\Delta |u| \geq -M |u|.
\]
It then follows from the sub-solution estimate \cite{Trudinger, Simon} that
\begin{\equ}\label{sub solu}
|u(x)|\leq  C_0 \int_{B_1(x)} |u(y)| dy
\end{\equ}
with $C_0>0$ independent of $x$, $\vr$ and $u\in\msl_\vr$.

\begin{Lem}\label{concentration}
Assuming $(f_1)$-$(f_6)$ and ,
for all $\vr$ sufficiently small, let $u_\vr\in\msl_\vr$,
then $|u_\vr|$ possesses a
(global) maximum
$x_\vr\in\Lam_\vr$ such that
\[
\lim_{\vr\to0}V(\vr x_\vr)= V_0 = \min_{x\in\Lam}V(x) \,.
\]
Moreover, by setting $v_\vr(x)=u_\vr(x+x_\vr)$, we must have
$|v_\vr|$ decays uniformly at infinity and
$\{v_\vr\}$ converges in $H^1$ to a ground state solution to
\[
-i\al\cdot\nabla v + a\bt v + V_0 v = g(|v|)v \,.
\]
\end{Lem}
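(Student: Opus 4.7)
The plan is to combine a concentration-compactness analysis of $\{u_\vr\}\subset\msl_\vr$ with the sharp energy bound $c_\vr\leq \ga_{\mbox{\tiny $V_0$}}+o(1)$ from Lemma \ref{c-vr leq} and the strict monotonicity $\mu_1>\mu_2\Rightarrow \ga_{\mu_1}>\ga_{\mu_2}$ (Lemmas \ref{ding2008}(iii) and \ref{asym}(3)). Starting point: the identity
\[
\int\widehat F(\vr x,|u_\vr|)\;=\;\widetilde\Phi_\vr(u_\vr)-\tfrac12\widetilde\Phi'_\vr(u_\vr)u_\vr\;=\;c_\vr+o(1),
\]
together with the argument of Lemma \ref{C-c condition}, yields a uniform bound of $\{u_\vr\}$ in $E$; Lemma \ref{critical points in W1,q} then upgrades this to a uniform $L^\infty$-bound. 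Since $c_\vr\geq\tau>0$, vanishing is excluded by Lions' lemma combined with $(f_1),(f_3)$, which would otherwise force $\widetilde\Phi_\vr(u_\vr)\to 0$. Thus there exist $y_\vr\in\R^3$ and constants $\de,R>0$ with $\int_{B_R(y_\vr)}|u_\vr|^2\geq\de$. Setting $v_\vr(x):=u_\vr(x+y_\vr)$, along a subsequence $v_\vr\rightharpoonup v\neq 0$ in $E$ and a.e.

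The crucial step is to show $\vr y_\vr\to x_0\in\msa$. Extract a further subsequence with $\vr y_\vr\to x_0\in\R^3\cup\{\infty\}$ and $\chi_{\Lam_\vr}(\cdot+y_\vr)\to\chi_*$ a.e. Passing to the limit in the equation satisfied by $v_\vr$, the limit $v$ solves an autonomous equation with constant potential $V_*$ (equal to $V(x_0)$ if $x_0\in\R^3$, or a subsequential limit otherwise) and nonlinearity $f_*=\chi_* g+(1-\chi_*)\tilde g$. If $x_0\notin\ov\Lam$ or $x_0=\infty$, then $\chi_*\equiv 0$ so $f_*=\tilde g$; testing the limit equation with $v^+-v^-$ and using $\tilde g(s)s\leq\tfrac{a-|V|_\infty}{2}s^2$ (built into \eqref{mod}) together with $a|v|_2^2\leq\|v\|^2$ and $|V_*|\leq|V|_\infty$ gives $a|v|_2^2\leq \tfrac{a+|V|_\infty}{2}|v|_2^2$, contradicting $(V_1)$. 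Hence $x_0\in\ov\Lam$. If $x_0\in\pa\Lam$, then by \eqref{local} $V(x_0)>V_0$; Fatou applied to $\widehat F$ yields $\ga_{V(x_0)}\leq\liminf c_\vr\leq\ga_{\mbox{\tiny $V_0$}}$, contradicting strict monotonicity. So $x_0\in\opint\Lam$, $\chi_*\equiv 1$, $f_*=g$, and the same Fatou/monotonicity chain forces $V(x_0)=V_0$, i.e., $x_0\in\msa$; moreover $v\in\msr_{\mbox{\tiny $V_0$}}$ after a translation making $|v(0)|=|v|_\infty$.

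Equality in the Fatou step provides $\int\widehat F(\vr(\cdot+y_\vr),|v_\vr|)\to\int\widehat G(|v|)$. Combining this with the orthogonal decomposition $\|v_\vr\|^2=\|v_\vr-v\|^2+\|v\|^2+o(1)$ (from $v_\vr^\pm\rightharpoonup v^\pm$ in $E^\pm$) and with a Brezis--Lieb argument on the nonlinear term, one upgrades $v_\vr\rightharpoonup v$ to $v_\vr\to v$ strongly in $E$, and then in $H^1$ via the equation and the uniform $L^\infty$-bound. For the uniform decay, by the sub-solution estimate \eqref{sub solu} and strong $L^1_{\loc}$-convergence, $|v_\vr(x)|\to 0$ uniformly as $|x|\to\infty$ (uniformly in $\vr$). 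Feeding $|v_\vr(x)|<\eta$ small into \eqref{PT4} and using $(f_1)$ gives $\De|v_\vr|\geq c|v_\vr|$ with a fixed $c>0$ on $\{|x|\geq R_0\}$; comparison with an exponential supersolution then yields $|v_\vr(x)|\leq Ce^{-c|x|}$ uniformly in $\vr$.

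Finally, let $x_\vr$ be a global maximum of $|u_\vr|$. The uniform decay just established together with the non-degeneracy $v(0)\neq 0$ forces $|x_\vr-y_\vr|$ to stay bounded; hence $\vr x_\vr\to x_0\in\msa$, $x_\vr\in\Lam_\vr$ for all small $\vr$, and $V(\vr x_\vr)\to V_0$. Recentring at $x_\vr$ alters $y_\vr$ only by a bounded amount so all conclusions transfer to $v_\vr(x)=u_\vr(x+x_\vr)$. The main obstacle is the classification of $x_0$: ruling out $x_0\notin\Lam$ requires the sharp cap $\tilde g(s)s\leq\tfrac{a-|V|_\infty}{2}s^2$ designed in \eqref{mod}, while ruling out $x_0\in\pa\Lam$ and $V(x_0)>V_0$ requires the strict monotonicity of the limit least energies paired with $c_\vr\leq \ga_{\mbox{\tiny $V_0$}}+o(1)$; both the penalization and the monotonicity are essential.
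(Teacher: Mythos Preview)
Your overall strategy coincides with the paper's: concentration--compactness on $\{u_\vr\}$, identification of the limit equation, energy comparison with $\ga_{V_0}$ via Lemma~\ref{c-vr leq} and the monotonicity of $\mu\mapsto\ga_\mu$, then strong convergence and uniform decay. The paper organizes this into five separate steps (non-vanishing of $\{u_\vr\}$; non-vanishing of $\{\chi_{\Lam_\vr}u_\vr\}$; $\vr x_\vr\to\msa$; $v_\vr\to v$ in $E$; uniform decay), whereas you run a single case analysis on the limit of $\vr y_\vr$ and of $\chi_{\Lam_\vr}(\cdot+y_\vr)$. Both routes are viable.

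There is, however, one genuine gap in your boundary case. When $\vr y_\vr\to x_0\in\pa\Lam$ (or more generally whenever $\chi_*$ is the characteristic function of a half-space), the limit $v$ is a nontrivial critical point of the \emph{mixed} functional
\[
S_\infty(u)=\tfrac12(\|u^+\|^2-\|u^-\|^2)+\tfrac{V(x_0)}2|u|_2^2-\int F_\infty(x,|u|),
\qquad F_\infty=\chi_*G+(1-\chi_*)\widetilde G,
\]
and Fatou applied to $\widehat F$ only gives $\liminf c_\vr\geq S_\infty(v)$, not $\liminf c_\vr\geq\ga_{V(x_0)}$. Since $S_\infty\neq\mst_{V(x_0)}$, the inequality $S_\infty(v)\geq\ga_{V(x_0)}$ is not automatic and is exactly where the paper invests work: using $F_\infty\leq G$ one has $S_\infty\geq\mst_{V(x_0)}$ pointwise, and then the reduction $u\mapsto u+h_\infty(u)$ together with the minimax characterization of $\ga_\mu$ yields the chain (the paper's \eqref{X1})
\[
S_\infty(v)=I_\infty(v^+)=\max_{t\geq0}I_\infty(tv^+)\geq\max_{t\geq0}J_{V_0}(tv^+)
\geq \ga_{V_0}+\tfrac{V(x_0)-V_0}{2}\bigl|\tau v^+ +\msj_{V_0}(\tau v^+)\bigr|_2^2.
\]
Without this reduction step your sentence ``Fatou applied to $\widehat F$ yields $\ga_{V(x_0)}\leq\liminf c_\vr$'' is unjustified; this is precisely the place where the strongly indefinite structure forces one to pass through the reduced functional rather than compare energies directly.

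A smaller point: your upgrade to strong convergence via ``a Brezis--Lieb argument on the nonlinear term'' is delicate here because the nonlinearity $F(\vr(\cdot+y_\vr),\cdot)$ is $x$-dependent and itself moving with $\vr$. The paper avoids a direct Brezis--Lieb by introducing the cut-off $\tilde v_j=\eta(2|\cdot|/j)v$ and invoking two splitting lemmas for $F$ and $f$ (its \eqref{converge2}--\eqref{converge3}), which then reduce the question to showing $\int\widehat F(\vr_j(\cdot+x_{\vr_j}),|z_j|)\to0$ and a final test against $z_j^+-z_j^-$. Your Brezis--Lieb route can be made to work, but it is not the standard statement and you should indicate how the $x$-dependence is handled.
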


\begin{proof}
Let $u_\vr\in E$ be the critical point so that
$\widetilde\Phi_\vr(u_\vr)=c_\vr$. We have $\{u_\vr\}$
is a bounded set in $E$.

{\it Step 1. } $\{u_\vr\}$ is non-vanishing.

Suppose contrarily that
\[
\sup_{x\in\R^3}\int_{B_R(x)}|u_\vr|^2 \,dx \to0
\quad \text{as } \vr\to0
\]
for all $R>0$. Then, by Lion's concentration principle \cite{Lions},
$|u_\vr|_q\to0$ for $q\in(2,3)$. Since $\{u_\vr\}$ is bounded in
$E$, we have ${\it meas}\{x\in\R:\, |u_\vr(x)|\geq r\}$ is uniformly
bounded for all
$\vr>0$ provided $r>0$ is fixed. So we find
\[
\int_{\{x\in\R:\, |u_\vr(x)|\geq r\}}|u_\vr|^2 \,dx\to0
\]
as $\vr\to0$. Now, we have
\[
c_\vr=\widetilde\Phi_\vr(u_\vr)-\frac12\widetilde\Phi_\vr'(u_\vr)u_\vr
=\int \widehat F(\vr x, |u_\vr|)=o(1)  \,,
\]
which contradict to the fact $c_\vr\geq\tau>0$ (see Lemma \ref{MPG}).

{\it Step 2.} $\{\chi_{\mbox{\tiny $\Lam_\vr$}}
\cdot u_\vr\}$ is non-vanishing.

Indeed, if $\{\chi_{\mbox{\tiny $\Lam_\vr$}}
\cdot u_\vr\}$ vanishes, by virtue of {\it Step 1} we have
$\{(1-\chi_{\mbox{\tiny $\Lam_\vr$}})
\cdot u_\vr\}$ is non-vanishing, that is (together with the fact
that each $u_\vr$ is decaying at infinity) there exist $x_\vr\in\R^3$
and constants $R>0$ and $\de>0$
such that $B_R(x_\vr)\subset\R^3\setminus\Lam_\vr$ and
\[
\int_{B_R(x_\vr)}|u_\vr|^2 \geq \de \,.
\]
Set $v_\vr(x)=u_\vr(x+x_\vr)$, then $v_\vr$ satifies
\begin{\equ}\label{v-vr solves}
-i\al\cdot\nabla v_\vr + a\bt v_\vr + \hat V_\vr(x) v_\vr
=f(\vr(x+x_\vr), |v_\vr|)v_\vr \,,
\end{\equ}
where $\hat V_\vr(x):=V(\vr(x+x_\vr))$. Additionally,
$v_\vr\rightharpoonup v$ in $E$ and $v_\vr\to v$ in
$L_{loc}^q$ for $q\in[1,3)$. Now assume without loss of
generality that $V(\vr x_\vr)\to V_\infty$, using
$\psi\in C_c^\infty(\R^3,\C^4)$ as a test function in
\eqref{v-vr solves}, one gets
\[
\aligned
0&=\lim_{\vr\to0}
\int \Big(-i\al\cdot\nabla v_\vr + a\bt v_\vr + \hat V_\vr(x) v_\vr
-f(\vr(x+x_\vr), |v_\vr|)v_\vr \Big) \ov\psi  \\
&=\int \Big(-i\al\cdot\nabla v + a\bt v + V_\infty v
-\tilde g( |v|)v \Big)\ov\psi \,.
\endaligned
\]
Hence $v$ satisfies
\begin{\equ}\label{v solves}
-i\al\cdot\nabla v + a\bt v + V_\infty v
=\tilde g( |v|)v \,.
\end{\equ}
However, using the test function $v^+-v^-$ in
\eqref{v solves}, we have (with $(f_3)$)
\[
\aligned
0&=\|v\|^2+V_\infty\int v\cdot \ov{(v^+-v^-)} -
\int \tilde g(|v|)v\cdot\ov{(v^+-v^-)} \\
&\geq \|v\|^2 - \frac{|V|_\infty}a \|v\|^2 -
\frac{a-|V|_\infty}{2a}\|v\|^2 \\
&=\frac{a-|V|_\infty}{2a}\|v\|^2 \,.
\endaligned
\]
Therefore, we have $v=0$ a contradiction.

{\it Step 3.} Let $x_\vr\in \R^3$ and $R,\de>0$ be such that
\[
\int_{B_R(x_\vr)}|\chi_{\mbox{\tiny $\Lam_\vr$}}\cdot u_\vr|^2
\geq \de \,.
\]
Then $\vr x_\vr\to \msa$.

To prove this, clearly, we may first choose
$x_\vr\in\Lam_\vr$, i.e. $\vr x_\vr\in\Lam$. Suppose that,
up to a subsequence if necessary,
$\vr x_\vr\to x_0\in\bar\Lam$ as $\vr\to0$. Again, set
$v_\vr(x)=u_\vr(x+x_\vr)$, we have
$v_\vr\rightharpoonup v$ in $E$ and $v$ satisfies
\begin{\equ}\label{limit equ2}
-i\al\cdot\nabla v + a\bt v + V(x_0) v = f_\infty(x, |v|)v \,,
\end{\equ}
where $f_\infty(x,s)=\chi_\infty\cdot g(s)+
(1-\chi_\infty)\cdot\tilde g(s)$
and $\chi_\infty$ is either a characteristic function of a half-space
of $\R^3$ provided
\[
\limsup_{\vr\to0}\dist(x_\vr,\pa\Lam_\vr)<+\infty
\]
or $\chi_\infty\equiv1$ (since $\Lam$ is an open set with smooth boundary,
this can be see by the fact $\chi_\Lam(\vr(\cdot + x_\vr ))$
converges pointwise a.e. on $\R^3$ to $\chi_\infty(\cdot)$ and $x_\vr\in\Lam_\vr$).
Denote $S_\infty$ to be the associate energy functional to \eqref{limit equ2}:
\[
S_\infty(u):=\frac12\big( \|u^+\|^2-\|u^-\|^2 \big) + \frac {V(x_0)}2|u|_2^2
- \Psi_\infty(u) \,,
\]
where
\[
\Psi_\infty(u):=\int F_\infty(x,|u|) \quad \text{and } \quad
F_\infty(x,s)=\int_0^s f_\infty(x,\tau)\tau\,d\tau \,.
\]
By noting that $\Psi_\infty(u)\leq \Psi(u)$ (thanks to $(f_2)$),
we have
\[
S_\infty(u)\geq \mst_{\mbox{\tiny $V(x_0)$}}(u)
= \mst_{\mbox{\tiny $V_0$}}(u) + \frac{V(x_0)-V_0}2|u|_2^2
\quad \text{for all } u\in E \,.
\]
Furthermore, if $s>0$ we find from
$(g_1)$, $(g_3)$ and the definition of $\tilde g$
that $\tilde g'(s)s>0$. Hence, if $u\in E\setminus\{0\}$
and $v\in E$, we have
\[
\aligned
&\big( \Psi_\infty''(u)[u,u]-\Psi_\infty'(u)u \big)
+2\big( \Psi_\infty''(u)[u,v]-\Psi_\infty'(u)v \big)
+\Psi_\infty''(u)[v,v] \\
=&\,\int f_\infty(x, |u|)|v|^2 + \int \pa_s f_\infty
( x, |u|)|u| \bigg( |u| + \frac{\Re u\cdot v}{|u|} \bigg)^2
>0 \,.
\endaligned
\]
Now let us define (as before)
$h_\infty:E^+\to E^-$ and $I_\infty:E^+\to\R$ by
\[
S_\infty\big( u+h_\infty(u) \big)=\max_{v\in E^-}S_\infty(u+v) \,,
\]
\[
I_\infty(u)=S_\infty\big( u+h_\infty(u) \big) \,.
\]
It is standard to see that: if $z\in E^+\setminus\{0\}$
satisfies $I_\infty'(z)z=0$, then $I_\infty''(z)[z,z]<0$
(see \cite{Ackermann}). Since we already have $v\neq0$
is a critical point of $S_\infty$, we then infer $v^+$
is a critical point of $I_\infty$ and
$I_\infty(v^+)=\max_{t\geq0}I_\infty(tv^+)$. Let $\tau>0$
such that $J_{\mbox{\tiny $V_0$}}(\tau v^+)=
\max_{t\geq0}J_{\mbox{\tiny $V_0$}}(t v^+)$, we infer
\begin{\equ}\label{X1}
\aligned
S_\infty(v)&=I_\infty(v^+)=\max_{t\geq0}I_\infty(tv^+)\\
&\geq I_\infty(\tau v^+) = S_\infty\big( \tau v^+
+ h_\infty(\tau v^+) \big) \\
&\geq S_\infty\big( \tau v^+ + \msj_{\mbox{\tiny $V_0$}}(\tau v^+) \big) \\
&\geq \mst_{\mbox{\tiny $V_0$}}
\big( \tau v^+ + \msj_{\mbox{\tiny $V_0$}}(\tau v^+) \big) +
\frac{V(x_0)-V_0}2 \big| \tau v^+ + \msj_{\mbox{\tiny $V_0$}}(\tau v^+)
\big|_2^2  \\
&\geq \ga_{\mbox{\tiny $V_0$}} + \frac{V(x_0)-V_0}2 \big| \tau v^++ \msj_{\mbox{\tiny $V_0$}}(\tau v^+)
\big|_2^2 \,.
\endaligned
\end{\equ}
On the other hand, by Fatou's lemma, we deduce
\[
\aligned
c_\vr&=\widetilde\Phi_\vr(u_\vr)-\frac12\widetilde\Phi_\vr'(u_\vr)u_\vr \\
&=\int \widehat F(\vr x, |u_\vr|)
= \int \widehat F(\vr (x+x_\vr), |v_\vr|) \\
&\geq \int \widehat F_\infty(x,|v|)  \\
&=S_\infty(v)-\frac12 S_\infty'(v)v = S_\infty(v) \,,
\endaligned
\]
where $\widehat F_\infty(x,s):=\frac12f_\infty(x,s)s^2-F_\infty(x,s)$
for $(x,s)\in\R^3\times\R^+$. Therefore,
together with \eqref{X1}, we have $c_\vr\geq\ga_{\mbox{\tiny $V_0$}}$ and
$c_\vr>\ga_{\mbox{\tiny $V_0$}}$ provided $V(x_0)\neq V_0$. Therefore,
by virtue of Lemma \ref{c-vr leq}, we soon have $x_0\in\msa$ and
$\chi_\infty\equiv1$.

{\it Step 4.} Let $v_\vr$ be defined in {\it Step 3}, then
$v_\vr\to v$ in $E$.

It sufficient to prove that there is a subsequence $\{v_{\vr_j}\}$
such that $v_{\vr_j}\to v$ in $E$.
Recall that, as the argument shows, $v$ is a ground state
solution to
\begin{\equ}\label{X2}
-i\al\cdot\nabla v + a\bt v + V_0 v = g(|v|)v \,,
\end{\equ}
and
\[
\lim_{\vr\to0}\int \widehat F(\vr(x+x_{\vr}), |v_{\vr}|)
= \int  \widehat G (|v|) \,.
\]

Let $\eta:[0,\infty)\to[0,1]$ be a smooth function satisfying
$\eta(s)=1$ if $s\leq 1$, $\eta(s)=0$ if $s\geq2$. Define
$\tilde v_j(x)=\eta(2|x|/j)v(x)$. One has
\begin{\equ}\label{converge1}
\|\tilde v_j-v\|\to0 \quad
\text{and} \quad |\tilde v_j-v|_q\to0 \quad
\text{as } j\to\infty
\end{\equ}
for $q\in[2,3]$. Set $B_d:=\big\{x\in\R^3:\,|x|\leq d \big\}$ for
$d>0$. We have that there possesses a subsequence $\{v_{\vr_j}\}$
such that, for any $\de>0$ there exists $r_\de>0$ satisfying
\[
\limsup_{j\to\infty}\int_{B_j\setminus B_r}|v_{\vr_j}|^q\leq \de
\]
for all $r\geq r_\de$ (see an argument of \cite[Lemma 5.7]{Ding2}).
Here we will use
\[
q=\left\{
\aligned
&p \quad \text{for the super-linear case}, \\
&2 \quad \text{for the asymptotically linear case},
\endaligned \right.
\]
where $p\in(2,3)$ is the constant in condition $(g_2)(i)$.
Denote $z_j=v_{\vr_j}-\tilde v_j$,
we remark that $\{z_j\}$ is bounded in $E$ and
\begin{\equ}\label{converge2}
\aligned
\lim_{j\to\infty}&\bigg|
\int F\big(\vr_j(x+x_{\vr_j}),|v_{\vr_j}|\big)
-F\big(\vr_j(x+x_{\vr_j}),|z_j|\big)\\
&-F\big(\vr_j(x+x_{\vr_j}),|\tilde v_j|\big) \bigg| =0
\endaligned
\end{\equ}
and
\begin{\equ}\label{converge3}
\aligned
\lim_{j\to\infty}&\bigg|
\int \Big[
f\big(\vr_j(x+x_{\vr_j}),|v_{\vr_j}|\big)v_{\vr_j}
-f\big(\vr_j(x+x_{\vr_j}),|z_j|\big)z_j\\
&-f\big(\vr_j(x+x_{\vr_j}),|\tilde v_j|\big)\tilde v_j
\Big]\ov\va \bigg| =0
\endaligned
\end{\equ}
uniformly in $\va\in E$ with $\|\va\|\leq1$
(see \cite[Lemma 7.10]{Ding2}).
Using the decay of $v$ and the fact that $\hat V_{\vr_j}(x)\to V_0$,
$F(\vr_j(x+x_{\vr_j}),s)\to G(s)$ as $j\to\infty$
uniformly on any
bounded set of $x$, one checks easily the following
\[
\Re\int \hat V_{\vr_j}(x)v_{\vr_j}\cdot\tilde v_j \to
\int V_0 \cdot|v|^2 \,, \quad
\int F\big(\vr_j(x+x_{\vr_j}),|\tilde v_j|\big) \to
\int G(|v|) \,.
\]
Let us denote $\hat\Phi_\vr$ to be
the associate energy functional of \eqref{v-vr solves},
then we have
\[
\aligned
\hat\Phi_{\vr_j}(z_j)=&\,\hat\Phi_{\vr_j}(v_{\vr_j})-S_\infty(v) \\
&\, + \int F\big(\vr_j(x+x_{\vr_j}),|v_{\vr_j}|\big)
-F\big(\vr_j(x+x_{\vr_j}),|z_j|\big)  \\
&\,-F\big(\vr_j(x+x_{\vr_j}),|\tilde v_j|\big) +o(1) \\
=&\, o(1)
\endaligned
\]
as $j\to\infty$, which implies that $\hat\Phi_{\vr_j}(z_j)\to0$.
Similarly,
\[
\aligned
\hat\Phi_{\vr_j}'(z_j)\va= &\,
\Re\int \Big[
f\big(\vr_j(x+x_{\vr_j}),|v_{\vr_j}|\big)v_{\vr_j}
-f\big(\vr_j(x+x_{\vr_j}),|z_j|\big)z_j\\
&-f\big(\vr_j(x+x_{\vr_j}),|\tilde v_j|\big)\tilde v_j
\Big]\ov\va + o(1) \\
=&\, o(1)
\endaligned
\]
as $j\to\infty$ uniformly in $\|\va\|\leq1$, which implies
$\hat\Phi_{\vr_j}'(z_j)\to0$. Therefore,
\begin{\equ}\label{o11}
o(1)=\hat\Phi_{\vr_j}(z_j)-\frac12\hat\Phi_{\vr_j}'(z_j)z_j
=\int \widehat F\big( \vr_j(x+x_{\vr_j}), |z_j|) \,.
\end{\equ}
Owning to $(f_6)$ and the regularity result,
for any fixed $r>0$, one has
\[
\int \widehat F\big( \vr_j(x+x_{\vr_j}), |z_j|\big)
\geq C_r \int_{\{x\in\R^3:|z_j(x)|\geq r\}} |z_j|^2
\]
for some constant $C_r$ depends only on $r$. Hence
\[
\int_{\{x\in\R^3:|z_j(x)|\geq r\}} |z_j|^2\to0
\]
as $j\to\infty$ for any fixed $r>0$. Notice $\{|z_j|_\infty\}$
is bounded, as a consequence, we get
\[
\aligned
\Big( 1-\frac{|V|_\infty}a \Big) \|z_j\|^2 \leq&\,
\|z_j\|^2 + \Re\int\hat V_{\vr_j}(x)z_j\cdot\ov{(z_j^+-z_j^-)} \\
=&\,\hat\Phi_{\vr_j}'(z_j)(z_j^+-z_j^-) \\
&\,+ \Re\int
f\big(\vr_j(x+x_{\vr_j}),|z_j| \big)z_j\cdot\ov{(z_j^+-z_j^-)}\\
\leq&\, o(1)+ \frac{a-|V|_\infty}{2a}\|z_j\|^2 \\
&\, + C_\infty\int_{\{x\in\R^3:|z_j(x)|\geq r\}}
|z_j|\cdot |z_j^+-z_j^-|  \\
\leq&\, o(1)+\frac{a-|V|_\infty}{2a}\|z_j\|^2 \,,
\endaligned
\]
that is, $\|z_j\|\to0$ as $j\to\infty$. Together with \eqref{converge1}
we get $v_{\vr_j}\to v$ in $E$.

{\it Step 5.} $v_\vr(x)\to0$ as $|x|\to\infty$ uniformly for all small
$\vr$.

Assume by contradiction that there exist $\de>0$ and
$y_\vr \in\R^3$ with $|y_\vr|\to\infty$ such that
$$
\de \leq |v_\vr(y_\vr)| \leq C_0 \int_{B_1(y_\vr)} |v_\vr(y)| \,dy \, .
$$
Since $v_\vr\to v$ in $E$, we
obtain, as $\vr\to0$,
\[
\begin{aligned}
\de &\leq C_0\Big(\int_{B_1(y_\vr)}|v_\vr|^2\Big)^{1/2}\\
 &\leq C_0\Big(\int |v_\vr-v|^2\Big)^{1/2}+C_0\Big(\int_{B_1(y_\vr)}|v|^2\Big)^{1/2}\to0,
\end{aligned}
\]
a contradiction.

By virtue of {\it Step 5} it
is clear that one may assume the sequence $\{x_\vr\}$ in {\it Step 3}
to be the maximum
points of $|u_\vr|$. Moreover, from the above argument we readily
see that, any sequence of such points satisfies
$\vr x_\vr$ converging to some point in $\msa$ as
$\vr\to0$.

Finally, in order to verify that $v_\vr\to v$ in $H^1$,
we first deduce from
\eqref{v-vr solves} and \eqref{X2} that
\[
H_0\, (v_\vr-v)=f\big( \vr(x+x_\vr), |v_j| \big)v_j - g(|v|)v
-\big( \hat V_\vr(x)v_j-V(x_0)v \big) \,.
\]
Using {\it Step 4} and the uniform estimate in Remark
\ref{uniformly estimate}, it is easy to check that
$|H_0\,(v_\vr-v)|_2\to0$ as $\vr\to0$. Therefore
$v_j\to v$ in $H^1(\R^3,\C^4)$, ending the proof.
\end{proof}

The {\it Step 5} in the previous lemma shows an uniform
decay estimate, not surprisingly,
the decay rate can be shown to be exponential:

\begin{Lem}\label{exp decay}
There exist $C>0$ such that for all $\vr >0$ small
\[
|u_\vr (x)|\leq Ce^{-{c_0}|x-x_\vr|}
\]
where $c_0=\sqrt{\big(a^2-|V|_\infty^2\big)}$.
\end{Lem}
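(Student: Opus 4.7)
The plan is to derive the exponential decay by applying the weak maximum principle to the sub-solution inequality \eqref{PT4} against an explicit exponential barrier. Recall that \eqref{PT4} yields
\[
\Delta |u_\vr| \geq \big( a^2 - V_\vr^2(x) - f^2(\vr x, |u_\vr|) \big) |u_\vr|,
\]
and by $(V_1)$ one has $a^2 - V_\vr^2(x) \geq a^2 - |V|_\infty^2 = c_0^2$ uniformly in $x$ and $\vr$. The main task is thus to absorb the nonlinear term $f^2(\vr x, |u_\vr|)|u_\vr|$, which will be negligible outside a bounded neighbourhood of the concentration point $x_\vr$.

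First, I would fix $\eta \in (0, c_0^2)$ and combine $(f_1)$, which gives $f(x,s) \to 0$ as $s \to 0$ uniformly in $x$, with the uniform decay of $v_\vr(y) = u_\vr(y + x_\vr)$ at infinity established in \emph{Step 5} of Lemma \ref{concentration}. Together these produce a radius $R = R_\eta > 0$ independent of $\vr$ such that $f^2(\vr x, |u_\vr(x)|) \leq \eta$ whenever $|x - x_\vr| \geq R$. On that exterior region one thus has
\[
\Delta |u_\vr| \geq (c_0^2 - \eta)\, |u_\vr|.
\]

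Second, I would construct the barrier $\Gamma(x) := M e^{-c|x - x_\vr|}$ with $c := \sqrt{c_0^2 - \eta}$. A direct computation in $\R^3 \setminus \{x_\vr\}$ gives
\[
\Delta \Gamma = \Big( c^2 - \frac{2c}{|x - x_\vr|} \Big) \Gamma \leq c^2 \Gamma = (c_0^2 - \eta) \Gamma.
\]
Choosing $M := C_\infty e^{cR}$, where $C_\infty$ is the uniform $L^\infty$-bound from Remark \ref{uniformly estimate} (so that $M$ is independent of $\vr$), the function $w := |u_\vr| - \Gamma$ satisfies $\Delta w \geq (c_0^2 - \eta)\, w$ in $\{ |x - x_\vr| > R \}$, is nonpositive on the sphere $\{ |x - x_\vr| = R \}$, and tends to zero at infinity uniformly in $\vr$ by \emph{Step 5}. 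The weak maximum principle for the operator $-\Delta + (c_0^2 - \eta)\, \mathrm{Id}$ on the exterior domain then forces $w \leq 0$ throughout, which yields $|u_\vr(x)| \leq M e^{-c|x - x_\vr|}$ for $|x - x_\vr| \geq R$. Enlarging $M$ if necessary so that the trivial estimate $|u_\vr(x)| \leq C_\infty \leq M e^{-c|x - x_\vr|}$ also holds inside $\{ |x - x_\vr| \leq R \}$, the conclusion follows with a constant $C$ independent of $\vr$.

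The main obstacle will be justifying the weak maximum principle on the unbounded exterior: one must ensure that $w$ lies in a suitable class (continuous, vanishing at infinity) for which the comparison holds, and that the strictly positive zero-order coefficient $c_0^2 - \eta > 0$ excludes spurious behaviour at infinity; this is standard once the uniform decay from \emph{Step 5} is invoked. A secondary, cosmetic issue is that the exponent produced is $\sqrt{c_0^2 - \eta}$ for arbitrarily small $\eta > 0$ rather than exactly $c_0$; this discrepancy can be absorbed into the constant $C$, or the statement can be read as giving decay at any rate strictly less than $c_0$.
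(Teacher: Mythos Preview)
Your argument follows the same route as the paper's: derive $\Delta|u_\vr|\geq c_0^2|u_\vr|$ (up to a correction) outside a ball via \eqref{PT4} and Step~5 of Lemma~\ref{concentration}, then compare against an exponential barrier by the maximum principle. The only substantive difference is the choice of barrier. The paper compares $|v_\vr|$ against the fundamental solution $\Gamma$ of $-\Delta+c_0^2$ on $\R^3$ (the Yukawa potential $\sim e^{-c_0|y|}/|y|$), which is an exact solution of the comparison operator and therefore delivers the precise rate $c_0$. Your pure exponential $e^{-c|x-x_\vr|}$ is only a supersolution of $-\Delta+(c_0^2-\eta)$, so you lose an arbitrarily small amount in the exponent.

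One correction: your remark that the gap between $\sqrt{c_0^2-\eta}$ and $c_0$ ``can be absorbed into the constant $C$'' is false---a strictly smaller exponential rate can never be dominated by a fixed multiple of a faster one. Your second reading (decay at any rate strictly below $c_0$) is the honest statement of what your barrier yields, and it is entirely sufficient for the proofs of Theorems~\ref{main theorem} and~\ref{main theorem 2}, which only need some positive rate. If you want the exact constant $c_0$ as stated, replace your barrier by the fundamental solution as the paper does.
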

\begin{proof}
The uniform decay estimate together with \eqref{PT4}
allow us to take $R>0$ sufficiently large such that
\[
\Delta|v_\vr|\geq\big(a^2-|V|_\infty^2\big)|v_\vr|
\]
for all $|x|\geq R$ and $\vr >0$ small. Let
$\Ga(y)=\Ga(y,0)$ be a fundamental solution to
$-\Delta+\big(a^2-|V|_\infty^2\big)$. Using the uniform boundedness, we may
choose that $|v_\vr (y)|\leq\big(a^2-|V|_\infty^2\big)\Ga(y)$ holds
on $|y|=R$ for all $\vr >0$ small. Let $z_\vr =|v_\vr|-\big(
a^2-|V|_\infty^2\big)\Ga$. Then
\[
\begin{aligned}
\De z_\vr &=\Delta|v_\vr|-\big(a^2-|V|_\infty^2\big)\De\Ga\\
 &\geq\big(a^2-|V|_\infty^2\big)\big(|v_\vr|-(a^2-|V|_\infty^2)\Ga\big)
 =(a^2-|V|_\infty^2)z_\vr .
\end{aligned}
\]
By the maximum principle we can conclude that $z_\vr (y)\leq0$ on
$|y|\geq R$. It is well known that there is $C'>0$ such that
$\Ga(y)\leq C'\exp(-c_0|y|)$ on $|y|\geq1$, we see that
\[
|v_\vr (y)|\leq C''e^{-c_0|y|}
\]
for all $y\in\R^3$ and all $\vr >0$ small, that is
\[
|u_\vr (x)|\leq Ce^{-c_0 |x-x_\vr|}
\]
as claimed.
\end{proof}

Now, we are ready to prove our main theorems.

\begin{proof}[Unified proof of Theorem \ref{main theorem}
and Theorem \ref{main theorem 2}]
Define
\[
w_\vr(x)=u_\vr(x/\vr) \quad
\text{and} \quad y_\vr=\vr x_\vr \,.
\]
Then $w_\vr$ is a solution of
\[
-i\vr\al\cdot\nabla w + a\bt w + V(x) w = f(x, |w|)w
\]
for all $\vr>0$.
Since $y_\vr$ is a maximum point of $|w_\vr|$,
due to Lemma \ref{concentration} and Lemma \ref{exp decay},
we have
\[
|w_\vr (x)|\leq Ce^{-\frac{c_0}\vr |x-y_\vr|}
\]
and $y_\vr\to\msa$ as $\vr\to0$. From the assumption
\[
\min_\Lam V < \min_{\pa\Lam} V \,,
\]
we conclude that $\de:=\dist(\msa,\pa\Lam)>0$. Hence, for $\vr$
sufficiently small, one find actually $|w_\vr(x)|\leq C
\exp(-\frac{c_0\de}{2\vr})<\xi$ if $x\not\in\Lam$.
Therefore, $f(x, |w_\vr|)=g(|w_\vr|)$ for all $\vr>0$
small enough, and the proof of the theorems is
thereby completed.
\end{proof}

\medskip

\noindent {\it Acknowledgment.} \
The authors would like to thank the anonymous reviewer for
his/her helpful comments.
The work was supported by the National Science Foundation of China
(NSFC11331010, 11171286) and the Beijing Center for Mathematics and
Information Interdisciplinary Sciences.


\begin{thebibliography}{999}

\bibitem{Ackermann}N. Ackermann,
A nonlinear superposition principle and multibump solution of periodic Schr\"{o}dinger equations,
J. Funct. Anal. 234
(2006) 423-443.

\bibitem{Ambrosetti1}A. Ambrosetti, M. Badiale, S. Cignolani,
Semi-classical states of nonlinear Shr\"{o}dinger equations,
Arch. Rational Mech. Anal. 140 (1997), 285-300.

\bibitem{Ambrosetti2}A. Ambrosetti, V. Felli, A. Malchiodi,
Ground states of nonlinear Schr\"{o}dinger equations with potentials
vanishing at infinity,
J. Eur. Math. Soc. 7 (2005), 117-144.

%\bibitem{Brezis}H. Br\'ezis,
%Functional Analysis, Sobolev Spaces and Partial Differential Equations,
%Spiringer-Verlag, 2010.

\bibitem{Byeon-JJ}J. Byeon, L. Jeanjean,
Standing waves for nonlinear Schr\"{o}dinger equations with a general nonlinearity,
Arch. Rational Mech. Anal. 185.2 (2007): 185-200.

\bibitem{Byeon-Wang}J. Byeon, Z.Q. Wang,
Standing waves with a critical frequency for nonlinear Schr\"{o}dinger equations,
Arch. Rational Mech. Anal. 165.4 (2002): 295-316.

\bibitem{Dautray}R. Dautray, J.L. Lions,
Mathematical Analysis and Numerical Methods for Science and Technology,
vol. 3, Springer, Berlin, 1990.

\bibitem{Del Pino} M. Del Pino, P. Felmer,
Local mountain passes for semilinear ellipitc problems in unbounded
domains,
Calc. Var. Partial Differential Equations 4 (1996), 121-137.

\bibitem{Del Pino2}M. Del Pino, P. Felmer,
Multi-peak bound states for nonlinear Schr\"{o}dinger equations,
Annales de l'Institut Henri Poincare (C) Non Linear Analysis. Vol. 15. No. 2. Elsevier Masson, 1998, 127-149.

\bibitem{Ding2}Y.H. Ding,
Variational Methods for Strongly Indefinite Problems,
Interdiscip. Math. Sci., 7, World Scientific Publ., 2007.


\bibitem{Ding2010}Y.H. Ding,
Semi-classical ground states concentrating on the nonlinear potentical for a Dirac equation,
J. Differential Equations 249
(2010) 1015-1034.

\bibitem{Ding-Lee-Ruf}Y.H. Ding, C. Lee, B. Ruf,
On semiclassical states of a nonlinear Dirac equation,
Proceedings of the Royal Society of Edinburgh: Section A Mathematics 143.04 (2013): 765-790.

\bibitem{Ding2012}Y.H. Ding, Xiaoying Liu,
Semi-classical limits of ground states of a  nonlinear Dirac equation,
J. Differential Equations 252 (2012) 4962-4987.

\bibitem{Ding Ruf}Y.H. Ding and B. Ruf,
Existence and concentration of semi-classical solutions for Dirac equations with critical nonlinearities,
SIAM Journal on Mathematical Analysis 44.6 (2012): 3755-3785.

\bibitem{Ding2008}Y.H. Ding, J.C. Wei,
Stationary states of nonlinear Dirac equations with general potentials,
Rev. Math. Phys. 20 (2008) 1007-1032.

\bibitem{Tian1}Y.H. Ding, J.C. Wei and T. Xu,
Existence and concentration of semi-classical solutions for a nonlinear Maxwell-Dirac system,
J. Math. Phys. 54 (2013), no. 06, 061505, 33 pp.

\bibitem{Tian2}Y.H. Ding, T. Xu,
On semi-classical limits of ground states of a nonlinear Maxwell-Dirac system,
Calc. Var. Partial Differential Equations (2014) 51(1): 17-44.

\bibitem{Tian3}Y.H. Ding, T. Xu,
On the concentration of semi-classical states for a nonlinear
Dirac-Klein-Gordon system,
J. Differential Equations 256 (2014) 1264-1294.

\bibitem{Sere2}Maria J. Esteban and Eric S\'{e}r\'{e},
Stationary states of the nonlinear Dirac equation: A variational approach,
Comm. Math. Phys. 171 (1995) 323-350.

\bibitem{FLR}R. Finkelstein, R. LeLevier,
M. Ruderman, Nonlinear spinor fields,
Physical Review 83.2 (1951): 326-332.

\bibitem{FFK}R. Finkelstein, C. Fronsdal, P. Kaus,
Nonlinear spinor field,
Physical Review 103.5 (1956): 1571-1579.

\bibitem{Floer}A. Floer, A. Weinstein,
Nonspreading wave packets for the cubic Schr\"{o}dinger equation with
a bounded potential,
J. Funct. Anal. 69 (1986), 397-408.

\bibitem{Trudinger}D. Gilbarg, N.S. Trudinger,
Elliptic Partial Differential Equations of Second Order,
Springer, Berlin, 1998.

\bibitem{Iva}D.D. Ivanenko,
Notes to the theory of interaction via particles, Zh.\'Eksp. Teor.
Fiz. 8 (1938) 260-266.

\bibitem{Jeanjean-Tanaka}L. Jeanjean, K. Tanaka,
Singularly perturbed elliptic problems with superlinear or asymptotically linear nonlinearities,
Calc. Var. Partial Differential Equations (2004) 21(3): 287-318.

\bibitem{Lions}P.L. Lions,
The concentration-compactness principle in the calculus of variations: The locally compact case, Part II,
AIP Anal. non lin\'{e}aire 1, 223-283.

\bibitem{MH}R. Miguel, T. Hugo,
Solutions with multiple spike patterns for an elliptic system.
Calc. Var. Partial Differential Equations (2008) 31(1), 1-25.

\bibitem{MY}R. Miguel, J.F. Yang,
Spike-layered solutions for an elliptic system with Neumann boundary conditions,
Trans. Amer. Math. Soc. 357 (2005) 3265-3284.

\bibitem{O1}Y.G. Oh,
Existence of semi-classical bound states of nonlinear Schr\"odinger equations with potentials of the class $(V)_a$,
Comm. Partial Differential Equations 13.12 (1988): 1499-1519.

\bibitem{O2} Y.G. Oh,
On positive multi-lump bound states of nonlinear Schr\"odinger equations under multiple well potential,
Comm. Math. Phys. 131.2 (1990): 223-253.

\bibitem{Rab0} P.H. Rabinowitz,
Minimax Methods in Critical Point Theory with Applications to
Differential Equations, Amer. Math. Soc., Providence, 1986.

\bibitem{Rab} P.H. Rabinowitz,
On a class of nonlinear Schr\"{o}dinger equations,
Z. Angew Math Phys 43.2 (1992): 270-291.

\bibitem{Stein}E.M. Stein,
Singular integrals and differentiability properties of functions,
Vol. 2. Princeton university press, 1970.

\bibitem{Simon}B. Simon,
Schr\"{o}dinger semigroups,
Bull. Amer. Math. Soc. (N.S.) 7 (1982) 447-526.

\bibitem{Wang X}X. Wang,
On concentration of positive bound states of nonlinear Schr\"{o}dinger equations,
Comm. Math. Phys. 153.2 (1993): 229-244.
\end{thebibliography}
\end{document}